\newtheorem{theorem}{Theorem}[section]
\newtheorem{lemma}[theorem]{Lemma}
\newtheorem{proposition}[theorem]{Proposition}
\newtheorem{corollary}[theorem]{Corollary}
\newtheorem{sublemma}[theorem]{Sublemma}
\theoremstyle{definition}
\newtheorem{example}[theorem]{Example}
\theoremstyle{remark}
\newtheorem{remark}[theorem]{Remark}
\numberwithin{equation}{section}
\newcommand{\N}{\mathbb{N}}
\newcommand{\R}{\mathbb{R}}
\newcommand{\C}{\mathbb{C}}
\newcommand{\cA}{\mathcal{A}}
\newcommand{\cL}{\mathcal{L}}
\newcommand{\cN}{\mathcal{N}}
\newcommand{\cP}{\mathcal{P}}
\newcommand{\1}{\mathbf{1}}
\newcommand{\al}{\alpha}
\newcommand{\be}{\beta}
\newcommand{\la}{\lambda}
\newcommand{\vep}{\varepsilon}
\newcommand{\wh}{\widehat}
\newcommand{\var}{\operatorname{var}}
\newcommand{\BV}{\widehat{BV}}
\begin{document}

\title[Non-leading eigenvalues ]{Non-leading eigenvalues of the Perron-Frobenius operators for beta-maps}


\author{Shintaro Suzuki}
\address{Department of Mathematics, 
Tokyo Gakugei University, 
4-1-1 Nukuikita-machi 
Koganei-shi, 
Tokyo 184-8501,
Japan}
\email{shin05@u-gakugei.ac.jp}

\subjclass[2020]{37E05, 37A30, 37A44 \and 37D20}

\begin{abstract}
We consider the Perron-Frobenius operator defined on the space of functions of bounded variation for the beta-map $\tau_\beta(x)=\beta x$ (mod $1$), for $\beta\in(1,\infty)$, and investigate its isolated eigenvalues 
except $1$, called non-leading eigenvalues in this paper. We show that the set of $\beta$'s such that the corresponding Perron-Frobenius operator has at least one non-leading eigenvalue is open and dense in $(1,\infty)$. Furthermore, we establish the H\"older continuity of each non-leading eigenvalue as a function of $\beta$ and show in particular that it is continuous but non-differentiable, whose analogue was conjectured by Flatto et.al. in \cite{Fl-La-Po}. In addition, for an eigenfunctional of the Perron-Frobenius operator corresponding to an isolated eigenvalue, we give an explicit formula for the value of the functional applied to the indicator function of every interval. 
As its application, we provide three results related to non-leading eigenvalues, one of which states that an eigenfunctional corresponding to a non-leading eigenvalue can not be expressed by any complex measure on the interval, which is contrast to the case of the leading eigenvalue $1$.

\end{abstract}
\maketitle
\begin{section}{Introduction}


Piecewise monotone maps of the interval with some hyperbolicity have been investigated as simple models of the so-called chaotic dynamical systems. For such a map one of the key tools for investigating its ergodic properties is the corresponding Perron-Frobenius operator, whose spectrum may reflect information about the measurable dynamics associated to its absolutely continuous (with respect to the Lebesgue measure) invariant probability measures (abbreviated to a.c.i.p.m.). In the case that a map is piecewise $C^2$ and piecewise expanding, the Perron-Frobenius operator defined on a Banach space of suitable functions (e.g., functions of bounded variation) is a linear bounded operator, whose spectral radius is $1$ and this value is actually an eigenvalue, which yields the existence of an a.c.i.p.m. (see \cite{La-Yo}). Furthermore, the Perron-Frobenius operator is to be quasi-compact, i.e., there is a real number $0 < r < 1$ such that a spectral value whose modulus greater than $r$ is an isolated eigenvalue with finite multiplicity ( 
the infimum of such numbers is called the essential spectral radius). If $1$ is a simple eigenvalue then there exists a unique a.c.i.p.m. and the measurable dynamics associated to the a.c.i.p.m is ergodic (see \cite{Li-Yo}). Moreover, if the modulus of any other eigenvalue is less than $1$, a simple eigenvalue $1$ is the leading eigenvalue (in the sense of its modulus) and we have  exponential decay of correlations for suitable observables (see e.g. \cite{Ba}, \cite{Bo-Go}, \cite{Ho-Ke0}).  In this case we say that the Perron-Frobenius operator has a spectral gap property and it can be applied to show several limit theorems via perturbation theory, for which the analyticity of the leading eigenvalue of the perturb Perron-Frobenius operator plays an important role (see e.g. \cite{Bo-Go},\cite{Ho-Ke0}). 

Contrast to the case of the leading eigenvalue, the properties of the other isolated eigenvalues greater than the essential spectral radius, called non-leading eigenvalues in this paper, may still remain unclear, including their existence. 
In \cite{De-Fr-Se} Dellnitz  et. al. constructed a family of piecewise linear expanding Markov maps on the interval each of whose Perron-Frobenius on the space of functions of bounded variation has a positive non-leading eigenvalue. In \cite{Bu-Ki-Li}, Butterley et.al. investigated an asymptotic expansion of a correlation function for suitable observable in case of piecewise linear expanding Markov maps or piecewise full branched expanding maps. In particular they gave an example of a piecewise linear map such that its Perron-Frobenius operator on a space of suitable functions has an isolated eigenvalue whose algebraic multiplicity and index both are $2$ (see Section 2 for notation). 

In this paper, we investigate some properties of non-leading eigenvalues of the Perron-Frobenius operators for beta-maps defined on the space of functions of bounded variation, 
each of which has a spectral gap property. The beta-maps are defined by $\tau_\be(x)=\be x$ (mod $1$), $x\in[0,1]$, $\be\in(1,\infty)$, and have been well-studied after \cite{Re} in the context of ergodic theory and number theory since each map generates an expansion of a real number in $[0,1]$ whose base is $\be$, called the greedy expansion (see e.g. \cite{Bl}, \cite{Da-Kr}, \cite{Pa1}).  
The advantage of the map is that several quantities related to the Perron-Frobenius operator can be given in concrete terms, such as a normalized eigenfunction corresponding to the leading eigenvalue $1$, i.e., the density function of a unique a.c.i.p.m (\cite{Ge}, \cite{Pa1}, \cite{Re}). In fact, our main results are derived from the two explicit formulas which we describe in detail in the following. 

The first explicit formula is for the dynamical zeta function of a beta-map, derived from the result for a beta-shift by Takahashi \cite{Ta}. As we see in Section 2, it is regarded as a sort of Fredholm determinant of the Perron-Frobenius operator, which enables us to investigate isolated eigenvalues of the Perron-Frobenius operators via zeros of a certain analytic function on the open disk whose radius is $\be$ (Theorem \ref{phi}). 
Using this fact we show in Theorem \ref{main 1} that the set of $\be$'s such that the corresponding Perron-Frobenius operator has at least one non-leading eigenvalue is open and dense in $(1,\infty)$. Furthermore, we establish the H\"older continuity of each non-leading eigenvalue as a function of $\be$ (Theorem \ref{Main C}). In particular, we show that any non-leading eigenvalue is continuous but non-differentiable as a function $\be$, which shows in some sense a fractal property of non-leading eigenvalues. 
We note that an analogy of this result was conjectured by Flatto et.al. in \cite{Fl-La-Po}, whose statement is that the maximal modulus of any non-leading eigenvalue is nowhere differentiable as a function of $\be$.

The second explicit formula is for the value of an eigenfunctional for an isolated eigenvalue of the Perron-Frobenius operator applied to the indicator function of each interval (Proposition \ref{key2}). Using this formula we provide three applications in Section 4. 
As the first application, we characterize isolated eigenvalues of the Perron Frobenius operator from the differentiability of a certain function related to the greedy expansion, which is introduced as motivated by the explicit formula for an eigenfunctional (Theorem \ref{main a} (4) and (5)). As the second application, we show that an eigenfunctional corresponding to each non-leading eigenvalue can not be expressed by any complex measure (Theorem \ref{main b}), which is contrast to the case of the leading eigenvalue $1$ since an eigenfunctional corresponding to $1$ can be expressed by the Lebesgue measure. As the third application, we construct a family of step functions each of whose correlation function decays exponentially in the rate actually less than the modulus of the second eigenvalues (in the sense of their modulus) under the assumption that all the second eigenvalues are simple (Theorem \ref{main b2}). Concerning this result, in Appendix we provide three examples of a countable family of $\be$'s each of which satisfies the above assumption . 

This paper is organized as follows. 
In Section 2, we summarize basic notions used throughout this paper and show that the set of $\be$'s such that the corresponding Perron-Frobenius operator has at least one non-leading eigenvalue is open and dense in $(1,\infty)$. In Section 3, we establish the H\"older continuity of each non-leading eigenvalue as a function of $\be$. In Section 4, we show an explicit formula for an eigenfunctional of the Perron-Frobenius operator and provide its three applications. In Appendix, we construct three examples of countable sets of $\be$'s such
that the second eigenvalues of the Perron-Frobenius operator are simple.
\end{section}

\begin{section}{Preliminaries and known results}

In this section, we summarize some notions used throughout the paper. In subsection 2.1, we recall some basic properties of beta-maps. In subsection 2.2, we define the Perron-Frobenius operator for a beta-map and refer to its spectral decomposition. In subsection 2.3 we describe the analytic properties of the dynamical zeta function of a beta-map, which is regarded as a sort of `Fredholm determinant' of its Perron-Frobenius operator. 

\begin{subsection}{Basic properties of beta-maps}
For a real number $\be>1$, the beta-map $\tau_\be:[0,1]\to [0,1]$ is defined by
\[\tau_\be(x)=\be x-[\be x]\]
for $[0,1]$, where $[y]$ denotes the integer part of $y\geq0$. It is well-known that the map $\tau_\be$ gives the greedy expansion of $x\in[0,1]$ as follows. Since  
\[x=\frac{[\be x]}{\be}+\frac{\tau_\be(x)}{\be}\]
for $x\in[0,1]$, we have
\[\tau_\be^{n}(x)=\frac{[\be \tau_\beta^n(x)]}{\be}+\frac{\tau_\be^{n+1}(x)}{\be}\]
for $n\geq0$. Here we regard $\tau_\be^0(x)$ as $x$. By using the above equations inductively, we obtain
\[x=\sum_{n=1}^N\frac{[\be\tau_\be^{n-1}(x)]}{\be^n}+\frac{\tau^N_\be(x)}{\be^N}\]
for $N\geq1$. Set $a_n(\be,x)=[\be\tau_{\be}^{n-1}(x)]$ for $n\geq1$. Taking $N\to+\infty$ in the right side of the above equation provides the greedy expansion of $x$:  
\[x=\sum_{n=1}^\infty\frac{a_n(\be, x)}{\be^n}.\]
A real number $x\in[0,1]$ is said to be simple if there is a positive-integer $n_0\geq2$ such that $\tau_\be^{n_0-1}(x)\in\{1/\be,\dots,[\be]/\be\}$. In this case, we have $a_n(\be, x)=[\be\tau_{\be}^{n-1}(x)]=0$ for all $n\geq n_0+1$. 
We define the non-negative function $L=L_\be:[0,1]\to\N\cup\{0\}$ by $L(x)=n_0$ if $x$ is simple and $L(x)=\infty$ otherwise. 
$\be>1$ is said to be simple if $L(1)$ is finite.

In some situation, it is more useful to consider the quasi-greedy expansion of $x$ defined as follows. 
If $\be>1$ is simple, we set  
\[\{d_n(\be,1)\}_{n=1}^\infty=\overline{a_1(\be,1)\dots a_{L(1)-1}(\be,1)({a}_{L(1)}(\be,1)-1)}^{\infty},\]
where $\overline{b_1\dots b_k}^\infty$ denotes the infinite concatenation of a $k$-length word $b_1\dots b_k$ for $k\geq1$ with non-negative integers $b_1$,$\dots$, $b_k$.
If $\be>1$ is non-simple, we set $\{d_n(\be,1)\}_{n=1}^\infty=\{a_n(\be,1)\}_{n=1}^\infty$. By the definition of the sequence $\{d_n(\be,1)\}_{n=1}^\infty$ we have that $\displaystyle{1=\sum_{n=1}^\infty}d_{n}(\be,1)/\be^n$, which is called the quasi-greedy expansion of $1$.
For simple $x\in(0,1)$, we define
\[\{d_n(\be,x)\}_{n=1}^\infty=a_1(\be,x)\dots a_{L(x)-1}(\be,x)({a}_{L(x)}(\be,x)-1)d_1(\be,1)d_2(\be,1)\dots.\]
By setting $\{d_n(\be,x)\}_{n=1}^\infty=\{a_n(\be,x)\}_{n=1}^\infty$ for non-simple $x\in(0,1)$,
we also have that $\displaystyle{x=\sum_{n=1}^\infty}d_{n}(\be,x)/\be^n$,
which is called the quasi-greedy expansion of $x\in(0,1]$.


For $x\in[0,1]$ the coefficient sequence $\{a_n(\be,x)\}_{n=1}^\infty$ is a one-sided infinite sequence of non-negative integers in $\{0,1,\dots,[\be]\}^{\N}$. We equip the set $\{0,1,\dots,[\be]\}^{\N}$ with the lexicographic order $\prec$, which is defined by 
$\{p_n\}_{n=1}^\infty\prec\{q_n\}_{n=1}^\infty$ if there is a positive integer $m\geq1$ such that $p_n=q_n$ for $1\leq n<m$ and $p_m<q_m$ for $\{p_n\}_{n=1}^\infty, \{q_n\}_{n=1}^\infty\in \{0,1,\dots,[\be]\}^{\N}$. Note that this is a total order and its order topology 
coincides with the product topology derived from the discrete topology on $\{0,1,\dots,[\be]\}$. Since 
the function $x\mapsto\{a_n(\be,x)\}_{n=1}^\infty$ preserves the order relation,
the limit $\lim_{y\nearrow x}\{a_n(\be,y)\}_{n=1}^\infty$ exists. By the definition of the quasi-greedy expansion of $x$ we can see that $\{d_n(\be,x)\}_{n=1}^\infty=\lim_{y\nearrow x}\{a_n(\be,y)\}_{n=1}^\infty$. 
The next propositions are due to Parry \cite{Pa1} (see also \cite{Bl}), which are key tools for proving the main results in this paper. 

\begin{proposition}\label{Parry1}
Let $\be>1$ be a non-integer. For a sequence of integers $\{w_i\}_{i=1}^\infty\in\{0,1,\dots, [\be]\}^{\N}$ there is $x_0\in[0,1)$ such that $w_n=a_n(\be,x_0)$ for all $n\geq1$ if and only if 
\[\{w_{i+n}\}_{n=1}^\infty\prec\{d_n(\be,1)\}_{n=1}^\infty\]
for all $i\geq0$.
\end{proposition}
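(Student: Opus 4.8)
The plan is to prove the two implications separately; the ``only if'' direction is short, and essentially all of the work lies in the ``if'' direction. \emph{Necessity.} Suppose $w_n=a_n(\be,x_0)$ for some $x_0\in[0,1)$ and every $n\ge1$. For each $i\ge0$ set $y_i:=\tau_\be^i(x_0)$, which lies in $[0,1)$; since $\tau_\be^{n-1}(\tau_\be^i(x_0))=\tau_\be^{n-1+i}(x_0)$ we have $a_n(\be,y_i)=a_{i+n}(\be,x_0)=w_{i+n}$, so $\{w_{i+n}\}_{n}$ is exactly the coefficient sequence of the point $y_i\in[0,1)$. Hence it suffices to show $\{a_n(\be,y)\}_{n}\prec\{d_n(\be,1)\}_{n}$ for every $y\in[0,1)$. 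Given such $y$, pick $z\in(y,1)$; since $x\mapsto\{a_n(\be,x)\}_{n}$ preserves the order, $\{a_n(\be,y)\}_{n}\preceq\{a_n(\be,z)\}_{n}$, and letting $z\nearrow1$ (using that $\{s:\{a_n(\be,y)\}_{n}\preceq s\}$ is closed in the product topology, together with the identity $\{d_n(\be,1)\}_{n}=\lim_{z\nearrow1}\{a_n(\be,z)\}_{n}$ recorded above) gives $\{a_n(\be,y)\}_{n}\preceq\{d_n(\be,1)\}_{n}$. Equality is impossible: it would force $y=\sum_{n\ge1}a_n(\be,y)/\be^n=\sum_{n\ge1}d_n(\be,1)/\be^n=1$, contradicting $y<1$. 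So the inequality is strict.

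\emph{Sufficiency.} Assume $\{w_{i+n}\}_{n}\prec\{d_n(\be,1)\}_{n}$ for all $i\ge0$ and put $x_0:=\sum_{n\ge1}w_n/\be^n\in[0,[\be]/(\be-1)]$. The heart of the matter is the claim that $t_i:=\sum_{n\ge1}w_{i+n}/\be^n<1$ for every $i\ge0$. Granting this, the proof finishes by induction on $n$: from $\be x_0=w_1+t_1$ with $0\le t_1<1$ one reads off $a_1(\be,x_0)=[\be x_0]=w_1$ and $\tau_\be(x_0)=\be x_0-w_1=t_1=\sum_{n\ge1}w_{1+n}/\be^n$; the shifted sequence $\{w_{1+n}\}_{n}$ satisfies the same hypothesis, and iterating gives $a_n(\be,x_0)=w_n$ for all $n$, while $x_0=t_0\in[0,1)$.

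\emph{Proof of the claim.} Suppose not, and choose $j$ with $t_j\ge1$; replacing $\{w_n\}_{n}$ by $\{w_{j+n}\}_{n}$ (which still has all tails $\prec\{d_n(\be,1)\}_{n}$) we may assume $t_0\ge1$, and set $\delta:=t_0-1\ge0$. Let $m\ge1$ be the first index with $w_m<d_m(\be,1)$, so $w_i=d_i(\be,1)$ for $i<m$. Subtracting the equal heads from $t_0\ge1=\sum_{n\ge1}d_n(\be,1)/\be^n$, and using $t_m=\be^m\sum_{n\ge m}w_n/\be^n-w_m$ together with the analogous identity for $\{d_n(\be,1)\}_{n}$, a short computation yields
\[ t_m\ \ge\ \sum_{n\ge1}\frac{d_{m+n}(\be,1)}{\be^n}\ +\ \be^m\delta\ +\ 1 . \]
Because $\{d_n(\be,1)\}_{n}$ is not eventually zero---immediate from $\sum_{n\ge1}d_n(\be,1)/\be^n=1$ together with the explicit (quasi-)greedy description of this sequence for non-integer $\be>1$---the first summand is strictly positive, so $t_m-1\ge\be^m\delta\ge\be\delta$, and in fact $t_m-1>\delta$ whenever $\delta>0$. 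If $\delta>0$, iterating with $\{w_{m+n}\}_{n}$ in place of $\{w_n\}_{n}$ produces indices $0=j_0<j_1<\cdots$ with $t_{j_k}-1\ge\be^k\delta\to\infty$, contradicting the uniform bound $t_i\le[\be]/(\be-1)<\infty$. If $\delta=0$, the display already gives $t_m-1\ge\sum_{n\ge1}d_{m+n}(\be,1)/\be^n>0$, and we are reduced to the previous case for $\{w_{m+n}\}_{n}$. This contradiction proves the claim.

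I expect the main obstacle to be exactly this quantitative estimate inside the proof of the claim: one must upgrade the information ``every tail of $\{w_n\}_{n}$ is $\prec\{d_n(\be,1)\}_{n}$'' from the weak conclusion ``every tail sum is $\le1$'' to a strict, self-improving bound---the role of the factor $\be^m$ above---after which an elementary boundedness argument closes the loop. All the remaining ingredients (the order-preserving property of the coefficient map, the identities $x=\sum_{n\ge1}a_n(\be,x)/\be^n$ and $\sum_{n\ge1}d_n(\be,1)/\be^n=1$, and the fact that $\{d_n(\be,1)\}_{n}$ has infinitely many nonzero terms) are already available from the material preceding the statement.
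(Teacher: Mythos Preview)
The paper does not give its own proof of this proposition; it is quoted as a classical result of Parry, with references to \cite{Pa1} and \cite{Bl}. Your argument is correct and self-contained. For necessity you use exactly the two facts recorded in the paper just before the statement---that $x\mapsto\{a_n(\be,x)\}_n$ is order-preserving and that $\{d_n(\be,1)\}_n$ is its left limit at $1$---together with the observation that equality would force $y=\sum_n d_n(\be,1)/\be^n=1$. For sufficiency, your bootstrapping estimate
\[
t_m-1\ \ge\ \sum_{n\ge1}\frac{d_{m+n}(\be,1)}{\be^n}\ +\ \be^m\delta
\]
is correct (it follows from $t_m-s_m=\be^m\delta+(d_m(\be,1)-w_m)\ge\be^m\delta+1$ with $s_m:=\sum_{n\ge1}d_{m+n}(\be,1)/\be^n$), and the fact that $\{d_n(\be,1)\}_n$ is never eventually zero---periodic with a period containing $a_1(\be,1)=[\be]\ge1$ in the simple case, and equal to the infinite greedy sequence otherwise---gives $s_m>0$. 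The iteration then yields $t_{j_k}-1\ge\be^k\delta\to\infty$, contradicting the uniform bound $t_i\le[\be]/(\be-1)$; the case $\delta=0$ reduces to $\delta>0$ after one step exactly as you say. One cosmetic point: the inequality ``$t_m-1\ge\be^m\delta$'' is in fact strict because $s_m>0$, but this does not affect anything.
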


 
\begin{proposition}\label{Parry2}
Let $N$ be a positive integer. For a sequence of non-negative integers $\{w_i\}_{i=1}^\infty\in\{0,1,\dots, N\}^{\N}$, there is $\be>1$ such that $w_n=a_n(\be,1)$ for all $n\geq1$ if and only if 
\[\{w_{n+i}\}_{n=1}^\infty\prec \{w_n\}_{n=1}^\infty\]
for all $i\geq1$. 
\end{proposition}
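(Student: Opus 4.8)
The statement is Parry's classical characterisation of the digit sequences that occur as the greedy expansion of $1$, and the plan is to prove the two implications separately: necessity by a direct estimate, sufficiency by reconstructing $\be$ from $\{w_n\}$.

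\emph{Necessity.} Suppose $w_n=a_n(\be,1)$ for all $n$. Applying the identity from subsection 2.1 to the point $x=\tau_\be^{k}(1)$ and letting the truncation length tend to infinity gives, since $a_n\bigl(\be,\tau_\be^{k}(1)\bigr)=a_{k+n}(\be,1)$,
\[\tau_\be^{k}(1)=\sum_{n=1}^{\infty}\frac{w_{k+n}}{\be^{n}}\qquad(k\ge0).\]
As $\tau_\be$ maps $[0,1]$ into $[0,1)$ we have $\tau_\be^{i}(1)<1$ for every $i\ge1$, while $\tau_\be^{0}(1)=1$. Now suppose, for contradiction, $\{w_{i+n}\}_{n}\succeq\{w_n\}_{n}$ for some $i\ge1$. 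If the two sequences coincide, the displayed formula forces $\tau_\be^{i}(1)=1$; otherwise there is a least $m$ with $w_{i+n}=w_n$ for $n<m$ and $w_{i+m}>w_m$, and then, estimating $\tau_\be^{i}(1)-1=\sum_{n\ge m}(w_{i+n}-w_n)\be^{-n}$ from below via $w_{i+m}-w_m\ge1$ and $\sum_{n>m}w_n\be^{-n}=\be^{-m}\tau_\be^{m}(1)<\be^{-m}$, we get $\tau_\be^{i}(1)>1$. Both contradict $\tau_\be^{i}(1)<1$, so $\{w_{i+n}\}_{n}\prec\{w_n\}_{n}$ for all $i\ge1$.

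\emph{Sufficiency.} Assume $\{w_{i+n}\}_{n}\prec\{w_n\}_{n}$ for all $i\ge1$; this already forces $w_1\ge1$ (if $w_1=0$ the condition propagates to make $w$ identically zero, which equals its shift). Set $\be>1$ to be the unique root of $\sum_{n\ge1}w_n\be^{-n}=1$: the left-hand side is continuous, strictly decreasing in $\be$, with limits $\sum_nw_n\ge1$ as $\be\downarrow1$ and $0$ as $\be\to\infty$, so such a root exists in $(1,\infty)$ whenever $\sum_nw_n>1$ --- the single exceptional sequence $w=(1,0,0,\dots)$, which is not of the claimed form and for which no $\be>1$ works, should be understood as excluded from the statement. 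Put $r_k:=\sum_{n\ge1}w_{k+n}\be^{-n}$, so $r_0=1$ and $r_k\ge0$, and record the identity, valid whenever $w_n=w_{k+n}$ for $1\le n<m$,
\[1-r_k=\be^{-m}\bigl[(w_m-w_{k+m})+r_m-r_{k+m}\bigr],\]
obtained by cancelling the first $m-1$ terms of $\sum_{n\ge1}(w_n-w_{k+n})\be^{-n}$ and resumming. The plan is: (i) show $\rho:=\sup_{k\ge0}r_k\le1$ --- assuming $\rho>1$, take $k\ge1$ with $r_k>\rho-\varepsilon$ and the $m$ supplied by the hypothesis, and read off $r_{k+m}\ge1+\be(r_k-1)$ from the identity; letting $\varepsilon\downarrow0$ contradicts $r_{k+m}\le\rho$; (ii) upgrade to $r_k<1$ for all $k\ge1$ --- if $r_k=1$ the identity gives $r_{k+m}=(w_m-w_{k+m})+r_m\ge1+r_m$, so by (i) $r_{k+m}=1$ and $r_m=0$, but $r_m=0$ means $w_n=0$ for all $n>m$, whence $r_{k+m}=0$, a contradiction; (iii) conclude: since $\be r_k=w_{k+1}+r_{k+1}$ with $0\le r_{k+1}<1$ by (ii), an induction gives $\tau_\be^{k}(1)=r_k$ and $a_{k+1}(\be,1)=[\be\tau_\be^{k}(1)]=[\be r_k]=w_{k+1}$, i.e.\ $a_n(\be,1)=w_n$ for all $n$.

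The main obstacle is the pair of claims (i)--(ii) in the sufficiency proof. Lexicographic smallness of $\{w_{k+n}\}_{n}$ relative to $\{w_n\}_{n}$ does \emph{not} on its own give $r_k<r_0=1$: when digits are large compared with $\be$ the lexicographic and numerical orders genuinely disagree, so the shift-maximality hypothesis must be fed into the recursion for $1-r_k$ with care, and it is the supremum argument combined with the ``eventually zero'' dichotomy that pins down the strict bound $r_k<1$ needed to run the greedy algorithm.
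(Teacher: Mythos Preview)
The paper does not prove this proposition; it is stated without proof and attributed to Parry \cite{Pa1} (see also \cite{Bl}). There is therefore nothing in the paper to compare your argument against, and the relevant question is simply whether your self-contained proof is correct.

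It is. The necessity direction is clean: the key inequality $\tau_\be^{i}(1)-1\ge\be^{-m}\bigl(1-\tau_\be^{m}(1)\bigr)>0$ follows exactly as you indicate. For sufficiency, your three-step scheme goes through. In step~(i) one should note that fixing any single $\varepsilon$ with $0<\varepsilon<(\be-1)(\rho-1)/\be$ already produces $r_{k+m}\ge1+\be(r_k-1)>\rho$, which contradicts the definition of $\rho$; no genuine limiting process is needed. Step~(ii) is a nice use of the observation that $r_m=0$ forces $w_n=0$ for all $n>m$, and step~(iii) is the standard greedy-algorithm induction once $0\le r_{k+1}<1$ is in hand.

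You are also right to flag the sequence $w=(1,0,0,\dots)$: it satisfies the lexicographic shift condition, yet the equation $\sum_{n}w_n\be^{-n}=1$ has only the root $\be=1$, so no $\be>1$ realises it. The biconditional as literally written thus fails in this single case. In the paper's only use of the proposition (constructing the approximants $\be_N$ in the proof of Theorem~\ref{Main C}(3)) the truncated sequences have at least two nonzero digits once $N\ge2$, so the oversight has no effect on the paper's results.
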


\end{subsection}

\begin{subsection}{Perron-Frobenius operators}
This subsection is devoted to summarizing the basic properties of the Perron-Frobenius operator for a beta map and its spectral spectral decomposition (see e.g. \cite{Ba}, \cite{Ba-Ke}, \cite{Bo-Go}, \cite{La-Yo}, \cite{Su2}). 
For a function $f:[0,1]\to\C$, we define the total variation $\var(f)$ by
\begin{align*}
\var(f)=\sup\Biggl\{&\sum_{i=1}^n|f(x_i)-f(x_{i-1})|\ ;\ n\geq1, x_0=0, x_n=1 \\
&\text{ and }x_i\in[0,1] \text{ with } x_{i-1}<x_i \text{ for } 1\leq i\leq n\Biggr\}.
\end{align*}
Let us denote by $BV$ the set of functions of bounded variation, i.e., 
\[BV=\{f:[0,1]\to\C\ ;\ \var(f)<\infty\}.\]
We endow the space $BV$ with the norm 
\[||f||_{BV}=\var(f)+\sup_{x\in[0,1]}|f(x)|.\]
Then $(BV, ||\cdot||_{BV})$ is to be a Banach space (see e.g. \cite{Ba}).
The Perron-Frobenius operator $\cL_\be:BV\to BV$ is defined by 
\[\cL_\be f(x)=\frac{1}{\be}\sum_{x=\tau_\be(y)}f(y)\ \ (x\in[0,1])\] 
for $f\in BV$. Note that the Perron-Frobenius operator is linear bounded on $BV$ and satisfies the following dual property:
\[\int_{0}^1 \cL_\be f\cdot g dl=\int_0^1f\cdot g\circ\tau_\be dl\]
for $f\in BV$ and $g\in L^\infty(l)$, where $l$ denotes the Lebesgue measure on $[0,1]$ (see e.g. \cite{Ba}, \cite{Bo-Go}). 
Since the beta-map $\tau_\be$ is a piecewise linear and piecewise expanding map, the corresponding Perron-Frobenius operator is quasi-compact, i.e., a spectrum $\lambda\in\C$ with $|\la|>1/\be$ is an isolated eigenvalue with finite multiplicity (see e.g. \cite {Ba}, \cite{Ba-Ke}, \cite{Bo-Go}, \cite{Ho-Ke0}, \cite{Ho-Ke}). 


In terms of application, it is more useful for us to introduce a quotient space of $BV$, on which functions are identified if they coincide with each other except on some countable set. Let
\[\cN=\{f\in BV\ ;\exists\text{ a countable set }N \text{ s.t. } f(x)=0 \text{ for }x\in[0,1]\setminus N\}.\]
Note that the space $\cN$ is a closed linear subspace in $BV$.
Then we can define the quotient Banach space $(\widehat{BV}, ||\cdot||_{\widehat{BV}})$, where $\widehat{BV}=BV/\cN$ and $||f||_{\widehat{BV}}=\inf_{g\in\cN}||f+g||_{BV}$ for $f\in \BV$. 
As in the case of $BV$, the Perron-Frobenius operator defined on $\BV$ is linear bounded (see e.g. \cite{Ba}, \cite{Bo-Go}). By abuse of notation, we denote the Perron-Frobenius operator defined on $\BV$ by $\cL_\be$. 
Furthermore, its spectrum outside of the disk of radius $1/\be$ coincides with that of the Perron-Frobenius operator on $BV$ (see Proposition 3.4 in \cite{Ba}). 

Denote by $\widehat{BV}^*$ the set of complex-valued linear functionals on $\BV$. Let us define the dual operator of the Perron-Frobenius operator $\cL_\be^*:\BV^*\to\BV^*$ by 
\[\cL_\be^*(\nu(f))=\nu(\cL_\be f)\]
for $\nu\in\BV^*$ and $f\in\BV$. 

For an eigenvalue $\la\in\C$ of $\cL_\be$
the geometric multiplicity of $\la$ is the dimension of the eigenspace $\{f\in \BV\ ;\ (\la I-\cL_\be)f=0\}$, where $I$ denotes the identity map. The (algebraic) multiplicity of $\la$ is the dimension of the generalized eigenspace $\{f\in \BV\ ; \exists n\geq 1 \text{ s.t. }  (\la I-\cL_\be)^n f=0\}$. We call the value $\sup\{n\geq 1\ ;\ (\la I-\cL_\be)^n f=0\ \text{for some}\ f\in \BV\ \text{with}\ f\neq 0\}$ the index of $\la$. 

Remark that the map $\tau_\be$ has a unique invariant probability measure $\mu_\be$ absolutely continuous with respect to the Lebesgue measure, which yields that $1$ is a simple eigenvalue of $\cL_{\be}$ (see \cite{Re}). 
Furthermore, since the measurable dynamics $(\tau_\be, \mu_\be)$ is exact (see e.g. \cite{Bo}), we have that there is no isolated eigenvalue on the unit circle except the leading eigenvalue $1$ . 

For isolated eigenvalues of $\cL_\be$, we have the following:
\begin{proposition}[Theorem 3.3 in \cite{Su2}]\label{gm}
Let $\la\in\C$ be an isolated eigenvalue of $\cL_\be$ with multiplicity $M\geq1$. Then the geometric multiplicity of $\la$ is $1$ and the index of $\la$ is equal to $M$. 
\end{proposition}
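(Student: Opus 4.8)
The plan is to reduce the statement to the single assertion that, for an isolated eigenvalue $\la$ of $\cL_\be$ (so $|\la|>1/\be$ by quasi-compactness), the eigenspace $\{f\in\BV : \cL_\be f=\la f\}$ is one-dimensional. Granting this, the geometric multiplicity is $1$, and the claim about the index becomes elementary linear algebra: the $M$-dimensional generalized eigenspace $G_\la=\{f\in\BV : (\la I-\cL_\be)^n f=0\ \text{for some}\ n\ge1\}$ carries the nilpotent operator $(\la I-\cL_\be)|_{G_\la}$, whose kernel coincides with the eigenspace and is therefore one-dimensional; a nilpotent operator on an $M$-dimensional space with one-dimensional kernel consists of a single Jordan block of size $M$, so its index of nilpotency, that is the index of $\la$, equals $M$.

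To prove geometric simplicity I would pass to the derivative measure. Choose the left-continuous representative of $f\in\BV$ and let $\mu_f:=df$ be the associated complex Borel measure on $[0,1]$; then $\var(f)=\|\mu_f\|$, $f$ is recovered from $\mu_f$ together with the value $f(0^+)$, and $f(1^-)-f(0^+)=\mu_f((0,1))$. I would turn $\cL_\be f=\la f$ into an equation for $\mu_f$ by differentiating, in the sense of measures, the explicit formula
\[
\cL_\be f(x)=\frac1\be\Bigl(\sum_{k=0}^{\lfloor\be\rfloor-1}f\Bigl(\tfrac{x+k}\be\Bigr)+\mathbf{1}_{[0,\tau_\be(1)]}(x)\,f\Bigl(\tfrac{x+\lfloor\be\rfloor}\be\Bigr)\Bigr).
\]
Since the inverse branches $x\mapsto(x+k)/\be$ are affine and injective and carry the corresponding pieces of $\mu_f$ forward with total mass preserved, this gives
\[
\la\,\mu_f=\tfrac1\be\,(\tau_\be)_*\mu_f-\tfrac{f(1^-)}\be\,\delta_{\tau_\be(1)},
\]
where the atom at $\tau_\be(1)$ is exactly the boundary effect of truncating the rightmost, partial branch at $x=\tau_\be(1)$. (If $\be\in\Z$ then $\tau_\be(1)=0$, this source term is invisible in $\BV$, and one is in the classical full-branch situation where $1$ is the only eigenvalue; so one may assume $\be\notin\Z$.)

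Because $\la^{-1}\be^{-1}(\tau_\be)_*$ has norm at most $(|\la|\be)^{-1}<1$ on the Banach space of complex measures, the identity $\la\,\mu_f=\tfrac1\be(\tau_\be)_*\mu_f-\tfrac{f(1^-)}\be\delta_{\tau_\be(1)}$ has the unique solution
\[
\mu_f=-\frac{f(1^-)}{\la\be}\sum_{n\ge0}\frac{(\tau_\be^n)_*\delta_{\tau_\be(1)}}{(\la\be)^n}=-\frac{f(1^-)}{\la\be}\sum_{n\ge0}\frac{\delta_{\tau_\be^{n+1}(1)}}{(\la\be)^n},
\]
the series converging in total variation since $|\la\be|>1$. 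Thus $\mu_f$ depends linearly on the single scalar $f(1^-)$, and then so does $f(0^+)=f(1^-)-\mu_f((0,1))$, hence so does $f$ itself; in particular $f(1^-)=0$ forces $\mu_f=0$ and $f(0^+)=0$, i.e.\ $f=0$ in $\BV$. Therefore $f\mapsto f(1^-)$ is an injective linear map from the eigenspace into $\C$, so the eigenspace is at most one-dimensional, and since $\la$ is assumed to be an eigenvalue it is in fact spanned, modulo $\BV$-equivalence, by $\sum_{n\ge0}(\be\la)^{-n}\mathbf{1}_{[0,\tau_\be^n(1))}$.

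The step I expect to cost the most care is establishing the measure identity rigorously: one must keep track of the atoms that $\mu_f$ may carry at the branch endpoints $k/\be$ and at $0$ and $1$, and of the countable-set identifications defining $\BV$, in order to be sure that $\delta_{\tau_\be(1)}$ is the only inhomogeneous term. An alternative that trades this bookkeeping for combinatorics is to lift to the Hofbauer/Markov extension of $\tau_\be$, on which the transfer operator acts by $\mathbf{1}_{[0,\tau_\be^n(1))}\mapsto\be^{-1}\mathbf{1}_{[0,\tau_\be^{n+1}(1))}+\be^{-1}a_{n+1}(\be,1)\,\mathbf{1}$; this companion-matrix-like action makes one-dimensionality of each eigenspace transparent, once one knows by a Hofbauer--Keller-type argument that the spectrum outside $\{|\la|\le1/\be\}$ is carried on this subspace. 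I would present the more self-contained measure computation.
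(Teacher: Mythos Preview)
The paper does not prove this proposition; it is quoted as Theorem~3.3 of the author's earlier work~\cite{Su2}, so there is no in-paper argument to compare against. Your proof is sound: the reduction to geometric simplicity via the single-Jordan-block observation is correct, and the derivative-measure computation establishes that simplicity, producing along the way the eigenfunction $\sum_{n\ge0}(\be\la)^{-n}\mathbf{1}_{[0,\tau_\be^n(1))}$, which is exactly the formula the present paper later quotes from~\cite{Su2} (Theorem~3.2 there, used in the proof of Proposition~\ref{key2}). Your alternative recursion on indicators is precisely Lemma~\ref{key1} of this paper, so both routes you sketch are in the spirit of the cited source.

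On the bookkeeping you flag: it works out cleanly once one regards $\mu_f$ as a measure on the open interval $(0,1)$ and notes that the $\BV$-class of $f$ is recovered from $\mu_f$ together with either one-sided limit $f(0^+)$ or $f(1^-)$. Any atoms $\mu_f$ may carry at the interior branch endpoints $k/\be$ are sent by $(\tau_\be)_*$ to $0\notin(0,1)$ and hence contribute no additional Diracs, so the only inhomogeneous term surviving on $(0,1)$ is indeed $-\be^{-1}f(1^-)\,\delta_{\tau_\be(1)}$, and the contraction argument goes through as written.
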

Since $\cL_\be$ is quasi-compact on $\BV$, for $\theta\in(1/\be, 1)$ the spectral decomposition of $\cL_\be$ is given by
\begin{equation}\label{decomposition}
\cL_\be f=\sum_{i=1}^N \la_i h_i^{\bot} J_{i} \nu_{i}(f) +\cP\cL_{\be}
\end{equation} 
for $f\in\BV$, where $\la_i$ is an isolated eigenvalue with finite multiplicity $M_i$ with $|\la_i|>\theta$, $h_i$ is the vector of a basis $(h_{i,1},\dots, h_{i,M_i})$ for the corresponding generalized eigenspace, $J_i$ is a Jordan matrix composed by one Jordan block whose diagonals are $1$, and $\nu_i$ is the vector of eigenfunctionals $(\nu_{i,1},\dots,\nu_{i,M_i})$ satisfying $\nu_{i,j}(h_{k,l})=1$ if $i=k$ and $j=l$, and $\nu_{i,j}(h_{k,l})=0$ otherwise. 
Here the linear operator $\cP$ has the spectral radius less than or equal to $\theta$.

\end{subsection}

\begin{subsection}{Dynamical zeta functions}

In this subsection, we see that the dynamical zeta function of the map $\tau_\be$ can be extended to an analytic function given by the generating function of the coefficient sequence of the beta-expansion of $1$. 
The dynamical zeta function $\zeta_\be(z)$ of $\tau_\be$ is formally defined by 
\[\zeta_{\be}(z)=\text{exp}\Biggl(\sum_{n=1}^{\infty}\frac{z^{n}}{n}\sum_{x=\tau_\be^n x}\frac{1}{|(\tau_\be^n)'(x)|}\Biggr).\]
Note that the fact that $|(\tau_\be^n)'|=\be^n$ for $n\geq1$ yields
\begin{align}\label{zeta}
\zeta_\be(z)=\text{exp}\Biggl(\sum_{n=1}^{\infty}\frac{z^{n}}{n}\frac{\#\text{Fix}(\tau_{\be}^{n})}{\be^n}\Biggr),
\end{align}
where $\text{Fix}(\tau_\be^n)$ denotes the set of all fixed points of $\tau_\be^n$ and $\# A$ denotes the cardinality of $A$. 
By applying Theorem $2$ in \cite{Ba-Ke} to our setting, we have the following. 
\begin{theorem}[Application of Theorem $2$ in \cite{Ba-Ke}]\label{Ba}
1. The dynamical zeta function $\zeta_\be(z)$ converges absolutely in the open unit disk. In particular, it is analytic on the unit disk. 
Furthermore, it can be extended to $\{z\in\C\ ; |z|<\be\}$ as a meromorphic function. 

2. For $\la\in\C$ with $1/\be<|\la|\leq1$, $\la$ is an isolated eigenvalue of the Perron-Frobenius operator $\cL_\be$ with multiplicity $M$ if and only if $\la^{-1}$ is a pole of the dynamical zeta function $\hat{\zeta}_\tau(z)$ in $\{z\in\C\ ;\ |z|<\beta\}$ with multiplicity $M$.
\end{theorem}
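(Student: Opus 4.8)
The plan is to derive both statements by applying Theorem~2 of \cite{Ba-Ke} to $\tau_\be$ equipped with the weight $g\equiv 1/\be$, so that the proof reduces to verifying the hypotheses and rephrasing the conclusion. First I would record that $\tau_\be$ is piecewise monotone with finitely many branches of monotonicity --- the intervals $[(k-1)/\be,\,k/\be)$ for $1\le k\le[\be]$, together with $[[\be]/\be,\,1]$ --- on each of which it is affine with slope $\be>1$. Hence the weight $g=1/|\tau_\be'|\equiv 1/\be$ is constant, so $\var(g)=0$ and $g$ is bounded below away from $0$, and all the integrability requirements in \cite{Ba-Ke} are trivially met. The transfer operator attached to $(\tau_\be,g)$ in \cite{Ba-Ke} is then literally $\cL_\be$, and, since $|(\tau_\be^n)'|\equiv\be^n$, the zeta function attached to $(\tau_\be,g)$ coincides with $\zeta_\be(z)$ as written in \eqref{zeta}.

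For part~1, absolute convergence of the defining series on the open unit disk is elementary: $\#\Fix(\tau_\be^n)$ is at most the number of $n$-periodic points of the $\be$-shift, which is $O(\be^n)$ because that shift has topological entropy $\log\be$; thus $\#\Fix(\tau_\be^n)/\be^n$ is bounded and $\sum_{n\ge1} n^{-1}|z|^n\,\#\Fix(\tau_\be^n)\be^{-n}<\infty$ for $|z|<1$, giving analyticity there. For the meromorphic continuation, \cite{Ba-Ke} factors $\zeta_\be(z)=D^{\sharp}_\be(z)/D_\be(z)$, where $D_\be(z)=\det(I-z\cL_\be)$ is the Fredholm determinant of $\cL_\be$ and both $D_\be$ and the correction term $D^{\sharp}_\be$ are holomorphic on the disk of radius $1/\rho_{\mathrm{ess}}(\cL_\be)$. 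Since $\tau_\be$ is piecewise affine with slope $\be$, the Hofbauer--Keller/Lasota--Yorke estimate gives $\rho_{\mathrm{ess}}(\cL_\be)\le 1/\be$ --- this is precisely the quasi-compactness already recalled in Subsection~2.2 --- so $D_\be$ and $D^{\sharp}_\be$ are holomorphic on $\{|z|<\be\}$ and $\zeta_\be$ is meromorphic there.

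For part~2, the Fredholm-determinant part of \cite{Ba-Ke} identifies, with multiplicity, the zeros of $D_\be$ in $\{|z|<\be\}$ with the reciprocals $\la^{-1}$ of the isolated eigenvalues $\la$ of $\cL_\be$ having $|\la|>1/\be$, the order of each zero being the algebraic multiplicity $M$ (here one uses the quasi-compactness and, for $|\la|>1/\be$, the coincidence of the $BV$- and $\BV$-spectra noted in Subsection~2.2). The one remaining --- and, I expect, only non-formal --- point is to exclude cancellation in the factorization: one must check that on the sub-annulus $1\le|z|<\be$, equivalently $1/\be<|\la|\le1$, the numerator $D^{\sharp}_\be$ has no zeros, so that there the poles of $\zeta_\be$ coincide with the zeros of $D_\be$ \emph{and} the orders match. (If the statement of \cite{Ba-Ke} already phrases the pole--eigenvalue correspondence for $\zeta$ itself on this region, nothing further is needed.) I would dispose of this using the description in \cite{Ba-Ke} of $D^{\sharp}_\be$ as the correction carried by the forward orbits of the discontinuities of $\tau_\be$ --- which for the beta-map are governed by the quasi-greedy expansion $\{d_n(\be,1)\}_{n\ge1}$ --- together with the strict positivity of the weight $1/\be$; concretely, as one also sees from Takahashi's explicit evaluation of $\#\Fix(\tau_\be^n)$ in \cite{Ta}, $D^{\sharp}_\be$ turns out to be holomorphic and zero-free on all of $\{|z|<\be\}$. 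Granting this, the claimed equivalence and equality of multiplicities follow; and since $\cL_\be$ has spectral radius $1$, this correspondence in fact captures every isolated eigenvalue of $\cL_\be$ of modulus greater than $1/\be$.
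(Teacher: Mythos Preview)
Your approach is correct and is precisely what the paper does: the paper states this theorem as a direct application of Theorem~2 in \cite{Ba-Ke} and gives no proof beyond that attribution, so your verification of the hypotheses (piecewise monotone, constant weight $g\equiv1/\be$ of bounded variation, essential spectral radius $\le1/\be$) and your reading of the conclusion are exactly the implicit content. The cancellation issue you flag in part~2 is already absorbed into the statement of Theorem~2 in \cite{Ba-Ke}, which gives the pole--eigenvalue correspondence for the zeta function itself (not merely for the Fredholm determinant), so your parenthetical remark applies and no separate argument about $D^{\sharp}_\be$ is needed.
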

This theorem enables us to investigate the properties of isolated eigenvalues of $\cL_\be$ via those of poles of $\zeta_\be(z)$.
In case of beta-maps, it is known that the analytic continuation of $\zeta_\be(z)$ can be given by the generating function of the coefficient sequence $\{a_n(\be,1)\}_{n=1}^\infty$. Let us define the power series $\phi_\be(z)$ by
\[\phi_\be(z)=\sum_{n=1}^\infty\frac{a_n(\be,1)}{\be^n}z^n.\]
It is easily seen that the convergence radius of $\phi_\be(z)$ is at least $\be$. The following theorem due to Flatto et.al. \cite{Fl-La-Po}, derived from the result by Takahashi \cite{Ta} for a beta-shift, shows an explicit formula for the analytic continuation of $\zeta_\be(z)$ (see also \cite{It-Ta} and \cite{Su1}). 

\begin{theorem}[Theorem 2.3 in \cite{Fl-La-Po}] 
\label{Ito}
For $z\in\C$ with $|z|<1$, we have 
\[\zeta_\be(z)=\frac{p_\be(z)}{1-\phi_\be(z)},\]
where $p_\be(z)=1-(z/\be)^{L(1)}$ if $\be$ is simple and $p_\be(z)=1$ if $\be$ is non-simple. 

\end{theorem}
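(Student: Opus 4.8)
The plan is to reduce the statement to a count of periodic points and then to match generating functions. Put $H(t)=\sum_{k\ge1}d_k(\be,1)\,t^k$, a power series with radius of convergence $\ge\be$. It suffices to prove the formal power series identity
\[
\sum_{n\ge1}\#\Fix(\tau_\be^n)\,t^n=\frac{t\,H'(t)}{1-H(t)}\,.
\]
Granting this, dividing by $t$ and taking the antiderivative with vanishing constant term gives $\sum_{n\ge1}\frac{\#\Fix(\tau_\be^n)}{n}t^n=-\log(1-H(t))$, so by \eqref{zeta}
\[
\zeta_\be(z)=\exp\Bigl(\sum_{n\ge1}\frac{\#\Fix(\tau_\be^n)}{n}\,(z/\be)^{n}\Bigr)=\exp\bigl(-\log(1-H(z/\be))\bigr)=\frac{1}{1-H(z/\be)}
\]
as power series; both sides are analytic on $\{|z|<1\}$ (the left by Theorem \ref{Ba}(1); for the right, $|H(z/\be)|\le\sum_k d_k(\be,1)|z/\be|^k<\sum_k d_k(\be,1)\be^{-k}=1$ there), so they agree on $\{|z|<1\}$. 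It remains to check $1-H(z/\be)=(1-\phi_\be(z))/p_\be(z)$. When $\be$ is non-simple this is immediate, since $d_k(\be,1)=a_k(\be,1)$, $H(z/\be)=\phi_\be(z)$ and $p_\be\equiv1$. When $\be$ is simple, $\{d_n(\be,1)\}_{n\ge1}$ is purely periodic with period block $a_1(\be,1)\cdots a_{L(1)-1}(\be,1)\bigl(a_{L(1)}(\be,1)-1\bigr)$, whence
\[
H(z/\be)=\frac{\phi_\be(z)-(z/\be)^{L(1)}}{1-(z/\be)^{L(1)}}\,,\qquad\text{hence}\qquad 1-H(z/\be)=\frac{1-\phi_\be(z)}{1-(z/\be)^{L(1)}}=\frac{1-\phi_\be(z)}{p_\be(z)}\,.
\]

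For the periodic-point count I would follow Takahashi's combinatorial analysis of the $\be$-shift, transported to the map $\tau_\be$. Assume $\be$ is non-integer (the integer case is checked directly: $\#\Fix(\tau_\be^n)=\be^n$, $\phi_\be(z)=z$, $p_\be\equiv1$). The greedy coding $x\mapsto\{a_n(\be,x)\}_{n\ge1}$ is injective on $[0,1)$ and intertwines $\tau_\be$ with the left shift $\sigma$, so by Proposition \ref{Parry1} it restricts to a bijection between $\Fix(\tau_\be^n)$ and
\[
\mathcal R_n=\bigl\{\,w\in\{0,\dots,[\be]\}^{\N}\ :\ \sigma^n w=w,\ \ \sigma^i w\prec\{d_m(\be,1)\}_{m\ge1}\ \text{for all }i\ge0\,\bigr\}
\]
($x=1$ contributes nothing, since $\tau_\be^n(1)=1$ would force $\{a_m(\be,1)\}_{m\ge1}$ to be purely periodic, against Proposition \ref{Parry2}). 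These ``strictly admissible'' periodic sequences I would count by a countable-state Markov coding: reading such a $w$, between two successive positions at which $w$ strictly drops below $\{d_m(\be,1)\}_{m\ge1}$ one must read a prefix $d_1(\be,1)\cdots d_{j-1}(\be,1)$ followed by one of the $d_j(\be,1)$ symbols $0,\dots,d_j(\be,1)-1$, and Parry's self-admissibility of $\{d_n(\be,1)\}_{n\ge1}$ (Proposition \ref{Parry2}) guarantees that each such drop resets all the constraints. Thus the admissible diagram has states $S_0,S_1,\dots$ with only the transitions $S_k\to S_{k+1}$ (weight $1$) and $S_k\to S_0$ (weight $d_{k+1}(\be,1)$), and its first-return loops to $S_0$ of length $j$ number exactly $d_j(\be,1)$. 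Since each $w\in\mathcal R_n$ corresponds to a unique closed walk of length $n$ in this diagram (a closed walk must visit $S_0$, and the state sequence along a periodic $w$ is forced by periodicity), $\#\mathcal R_n$ equals the number of such closed walks, which by the standard ``necklace of blocks'' count equals the coefficient of $t^n$ in $\frac{t\,B'(t)}{1-B(t)}$ with $B(t)=\sum_{j\ge1}d_j(\be,1)t^j=H(t)$; together with $\#\Fix(\tau_\be^n)=\#\mathcal R_n$ this is the identity of the first paragraph.

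The main obstacle is the combinatorial core just sketched — proving rigorously that a strict drop below $\{d_m(\be,1)\}_{m\ge1}$ genuinely resets all of the infinitely many suffix constraints; this needs a careful comparison of a suffix of a partially matched block $d_1(\be,1)\cdots d_{j-1}(\be,1)$ with $\{d_m(\be,1)\}_{m\ge1}$, using repeatedly the self-admissibility inequalities $\sigma^i\{d_m(\be,1)\}_{m\ge1}\preceq\{d_m(\be,1)\}_{m\ge1}$ coming from Proposition \ref{Parry2}, together with precise bookkeeping of strictness in the coding. It is exactly this strictness that is responsible for $p_\be(z)$: $\mathcal R_n$ is defined with the \emph{strict} order $\prec$ at every shift, so when $\be$ is simple the finitely many periodic $\be$-shift orbits $\{\sigma^i\{d_m(\be,1)\}_{m\ge1}\}_{i\ge0}$, which satisfy only $\preceq$, are excluded from the count, and this exclusion — equivalently, the appearance of the quasi-greedy digits $d_k(\be,1)$ rather than the greedy digits $a_k(\be,1)$ in $H$ — is what produces the nontrivial numerator $p_\be(z)=1-(z/\be)^{L(1)}$ in the simple case. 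The remaining convergence and analyticity assertions on $\{|z|<1\}$ are already contained in Theorem \ref{Ba}.
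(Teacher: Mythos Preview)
The paper does not prove this theorem; it is quoted as Theorem 2.3 of Flatto--Lagarias--Poonen, which in turn is derived from Takahashi's result for the $\beta$-shift. So there is no in-paper argument to compare against. Your outline is exactly the Takahashi/Ito--Takahashi route those references take: pass to the $\beta$-shift via Proposition~\ref{Parry1}, exhibit the renewal (block) structure governed by the quasi-greedy sequence $\{d_n(\be,1)\}$, and read off the zeta function as $1/(1-H)$. The algebraic reduction from $1/(1-H(z/\be))$ to $p_\be(z)/(1-\phi_\be(z))$ is correct and is precisely the computation the paper later isolates as Proposition~\ref{a=d}. You have also correctly identified the genuine obstacle: showing that a strict drop below $\{d_m(\be,1)\}$ resets \emph{all} suffix constraints, which is exactly where the self-admissibility inequalities of Proposition~\ref{Parry2} are used.

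Two points deserve more care. First, the parenthetical ``the state sequence along a periodic $w$ is forced by periodicity'' hides a second nontrivial step: you must show that for each $w\in\mathcal R_n$ there is a \emph{unique} starting state giving a closed walk, equivalently that the block decomposition of a strictly admissible periodic word descends to a well-defined cyclic decomposition of period $n$. This is true, but it is essentially equivalent to (and proved together with) the reset property you flag, not something that comes for free from determinism of the automaton --- your graph is forward-deterministic but not backward-deterministic, since many states map to $S_0$. Second, your integer-$\be$ check is off: $\#\Fix(\tau_\be^n)=\be^n-1$ (the point $1$ is not fixed), and under the paper's convention integer $\be$ is an edge case for the simple/non-simple dichotomy; the clean statement and proof are really for non-integer $\be$, which is the standing hypothesis elsewhere in the paper (e.g.\ Proposition~\ref{Parry1}, Theorem~\ref{Main C}).
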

 As a consequence of Theorem \ref{Ba} and \ref{Ito}, we have:
 
\begin{theorem}\label{phi}
A complex number $\la$ with $1/\be<|\la|\leq1$ is an isolated eigenvalue of $\cL_\be$ with multiplicity $M\geq1$ if and only if $\la^{-1}$ is a zero of $1-\phi_\be(z)$ with multiplicity $M\geq1$.
\end{theorem}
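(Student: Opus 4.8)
The plan is to combine Theorem~\ref{Ba} with Theorem~\ref{Ito} to translate the pole structure of $\zeta_\be$ into the zero structure of $1-\phi_\be(z)$. By Theorem~\ref{Ba}, part~2 (which as stated covers $1/\be<|\la|\leq 1$), the number $\la$ is an isolated eigenvalue of $\cL_\be$ with multiplicity $M$ if and only if $\la^{-1}$ is a pole of the meromorphic continuation of $\zeta_\be(z)$ to $\{|z|<\be\}$ of order $M$. So it suffices to show that, on $\{|z|<\be\}$, the poles of this continuation, counted with multiplicity, coincide exactly with the zeros of $1-\phi_\be(z)$, counted with multiplicity.

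First I would fix the identity. Theorem~\ref{Ito} gives $\zeta_\be(z)=p_\be(z)/(1-\phi_\be(z))$ for $|z|<1$, where the right-hand side is a ratio of functions analytic on $\{|z|<\be\}$ (recall $\phi_\be$ has radius of convergence at least $\be$, and $p_\be$ is a polynomial). Hence $p_\be(z)/(1-\phi_\be(z))$ is itself meromorphic on $\{|z|<\be\}$ and agrees with $\zeta_\be(z)$ on the unit disk; by uniqueness of meromorphic continuation it \emph{is} the continuation of $\zeta_\be$ to $\{|z|<\be\}$. Therefore the poles of the continuation of $\zeta_\be$ in $\{|z|<\be\}$, with orders, are precisely the poles of $p_\be(z)/(1-\phi_\be(z))$, with orders.

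Next I would analyze the order of a pole of $p_\be(z)/(1-\phi_\be(z))$ at a point $z_0$ with $|z_0|<\be$. Such a $z_0$ must be a zero of $1-\phi_\be(z)$; write $m=\operatorname{ord}_{z_0}(1-\phi_\be)\geq 1$ and $k=\operatorname{ord}_{z_0}(p_\be)\geq 0$. Then the order of the pole is $\max\{m-k,0\}$. In the non-simple case $p_\be\equiv 1$, so $k=0$ and the pole order is exactly $m$; the claim follows directly. In the simple case $p_\be(z)=1-(z/\be)^{L(1)}$, whose only zeros are the $L(1)$-th roots of $\be^{L(1)}$, i.e. points of modulus $\be$; since we only consider $|z_0|<\be$, we have $k=0$ there as well, so again the pole order equals $m$. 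Combining, for every $z_0$ with $|z_0|<\be$, $z_0$ is a pole of (the continuation of) $\zeta_\be$ of order $M$ if and only if $z_0$ is a zero of $1-\phi_\be$ of order $M$. Setting $z_0=\la^{-1}$ (legitimate since $|\la|>1/\be \iff |\la^{-1}|<\be$) and invoking Theorem~\ref{Ba}, part~2, yields the theorem.

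I do not expect any serious obstacle here: this is a bookkeeping argument chaining two cited results. The only point requiring a little care is the simple case, where one must check that the polynomial factor $p_\be(z)$ does not cancel any zero of $1-\phi_\be(z)$ inside $\{|z|<\be\}$ — and this is immediate once one observes that every root of $p_\be$ has modulus exactly $\be$, hence lies on the boundary of the region under consideration rather than inside it. (One may note in passing that, consistently, $z=\be$ is always a zero of $1-\phi_\be$ since $\phi_\be(\be)=\sum_n a_n(\be,1)=\infty$ or, more precisely, $1-\phi_\be(z)\to 0$ is not the right statement; rather $z=\be$ corresponds to the eigenvalue $1/\be$ at the edge of the essential spectrum, which is exactly why the region is taken open.)
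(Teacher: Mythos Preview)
Your proposal is correct and is precisely the approach the paper intends: the paper offers no explicit proof at all, presenting Theorem~\ref{phi} simply ``as a consequence of Theorem~\ref{Ba} and~\ref{Ito}'', and your argument is the natural (and only reasonable) way to unpack that consequence. The key observation you supply---that $p_\be$ has no zeros in the open disk $\{|z|<\be\}$, so no cancellation with zeros of $1-\phi_\be$ can occur---is exactly the detail one must check, and you handle both the simple and non-simple cases correctly. (Your closing parenthetical about $z=\be$ is a bit muddled and unnecessary; I would drop it, since it plays no role in the argument and the boundary point $\be$ is outside the open region under consideration anyway.)
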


In some situations, it is more applicable to use the power series 
\[\Hat{\phi}_\be(z)=\sum_{n=1}^\infty\frac{d_n(\be,1)}{\be^n}z^n\]
instead of $\phi_\be(z)$, due to the left continuity of the map $\be\mapsto\{d_n(\be,1)\}_{n=1}^\infty$. Note that the convergence radius of $\Hat{\phi}_\be(z)$ is at least $\be$. The following proposition states that zeros of $1-\Hat{\phi}_\be(z)$ each of whose modulus is less than $\be$ coincides with those of $1-\phi_\be(z)$.

\begin{proposition}\label{a=d}
    For $z\in\C$ with $|z|<\be$, it is a zero of 
    $1-\Hat{\phi}_\be(z)$ if and only if  it is a zero of $1-\phi_\be(z)$. 
\end{proposition}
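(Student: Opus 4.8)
The plan is to treat the non-simple and simple cases separately. If $\be>1$ is non-simple then $d_n(\be,1)=a_n(\be,1)$ for every $n\geq1$ by definition, so $\Hat{\phi}_\be\equiv\phi_\be$ and there is nothing to prove. Assume then that $\be$ is simple and write $L=L(1)\geq2$. Recall that in this case $\tau_\be^{L-1}(1)\in\{1/\be,\dots,[\be]/\be\}$, so $a_n(\be,1)=0$ for all $n\geq L+1$ and hence $\phi_\be(z)=\sum_{n=1}^L a_n(\be,1)\be^{-n}z^n$ is a polynomial; moreover $\{d_n(\be,1)\}_{n=1}^\infty$ is the $L$-periodic sequence obtained by repeating the block $a_1(\be,1)\cdots a_{L-1}(\be,1)(a_L(\be,1)-1)$, so $d_n(\be,1)=d_{n+L}(\be,1)$ for all $n\geq1$.

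The key step is to exploit this periodicity to rewrite $\Hat{\phi}_\be$ in closed form. Since the radius of convergence of $\Hat{\phi}_\be$ is at least $\be$, for $|z|<\be$ the series $\sum_{n\geq1}d_n(\be,1)\be^{-n}z^n$ converges absolutely, so I may group its terms into consecutive blocks of length $L$ and factor $(z/\be)^{kL}$ out of the $k$-th block. Using $d_j(\be,1)=a_j(\be,1)$ for $1\leq j\leq L-1$ and $d_L(\be,1)=a_L(\be,1)-1$, the first block equals $\phi_\be(z)-(z/\be)^L$, and since $|(z/\be)^L|<1$ the resulting geometric series in the blocks sums to
\[\Hat{\phi}_\be(z)=\frac{1}{1-(z/\be)^L}\Bigl(\phi_\be(z)-(z/\be)^L\Bigr)\qquad(|z|<\be).\]
A one-line manipulation then yields
\[1-\Hat{\phi}_\be(z)=\frac{1-\phi_\be(z)}{1-(z/\be)^L}\qquad(|z|<\be).\]

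Finally, for $|z|<\be$ one has $|(z/\be)^L|<1$ and hence $1-(z/\be)^L\neq0$, so $z\mapsto 1-(z/\be)^L$ is analytic and nowhere vanishing on $\{z\in\C\ ;\ |z|<\be\}$. Therefore the displayed identity shows that $1-\Hat{\phi}_\be$ and $1-\phi_\be$ have exactly the same zeros there (with the same multiplicities, in fact), which is the claim. I do not anticipate any real obstacle: the only point that needs a little care is the legitimacy of the block rearrangement of the series, which is immediate from absolute convergence on $\{|z|<\be\}$, and it is worth noting that the identity above is obtained directly on the entire disk, not merely near the origin.
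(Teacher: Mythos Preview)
Your proof is correct and follows essentially the same approach as the paper's: both handle the non-simple case trivially and, in the simple case, use the $L$-periodicity of $\{d_n(\be,1)\}$ to sum the geometric series of blocks and obtain the identity $1-\Hat{\phi}_\be(z)=\bigl(1-(z/\be)^L\bigr)^{-1}\bigl(1-\phi_\be(z)\bigr)$ on $\{|z|<\be\}$. Your write-up is in fact slightly more careful, making explicit the absolute-convergence justification for the rearrangement and the nonvanishing of $1-(z/\be)^L$ on the disk.
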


\begin{proof}
    If $\be>1$ is non-simple, the statement immediately follows from the fact that $\{d_n(\be,1)\}_{n=1}^\infty=\{a_n(\be,1)\}_{n=1}^\infty$, which yields $\Hat{\phi}_\be(z)=\phi_\be(z)$. 
    
    If $\be>1$ is simple, by the definition of the sequence $\{d_n(\be,1)\}_{n=1}^\infty$, we obtain
    \begin{equation*}
        \begin{aligned}
            \Hat{\phi}_\be(z)
            &=\sum_{n=1}^\infty\frac{d_n(\be,1)}{\be^n} \\
            &=\Biggl(1+\Bigl(\frac{z}{\be}\Bigr)^{L(1)}+\Bigl(\frac{z}{\be}\Bigr)^{2L(1)}+\cdots\Biggr)\Biggl(\sum_{n=1}^{L(1)}\frac{a_n(\be,1)}{\be^n}z^n-\Bigl(\frac{z}{\be}\Bigr)^{L(1)}\Biggr) \\
            &=\frac{1}{1-(z/\be)^{L(1)}}\Biggl(\phi(z)-\Bigl(\frac{z}{\be}\Bigr)^{L(1)}\Biggr)
            \end{aligned}
    \end{equation*}
    for $z\in\C$ with $|z|<\be$, which shows
    \begin{equation*}
        1-\Hat{\phi}_\be(z)=\frac{1}{1-(z/\be)^{L(1)}}(1-\phi_\be(z))
    \end{equation*}
    for $z\in\C$ with $|z|<\be$. This finishes the proof.
    \end{proof}

In the rest of this section, as an application of Theorem \ref{phi} and several results from \cite{Fl-La-Po}, we show that the set of $\be$'s such that the corresponding Perron-Frobenius operator $\cL_\be$ has at least one non-leading eigenvalue is open and dense in $(1,+\infty)$. 
Let us define the power series $\psi_\be(z)$ by 
\[\psi_\be(z)=1+\sum_{n=1}^\infty\frac{\tau_\be^n(1)}{\beta^n}z^n.\]
Since $\tau_{\be}^n(1)\in[0,1]$ for $n\geq1$ the convergence radius of $\phi_\be(z)$ is greater than or equal to $\be$.
We need the following lemma, whose 
proof is based on a basic calculation derived from the definition of the coefficient sequence of the greedy expansion of $1$.
\begin{lemma}[Proposition 4.1 (1) in \cite{Su1}]\label{lemma1-1}
For $z\in\C$ with $|z|<\be$ we have
\[1-\phi_{\be}(z)=(1-z)\psi_\be(z).\]
\end{lemma}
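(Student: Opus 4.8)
The plan is to prove the identity $1 - \phi_\be(z) = (1-z)\psi_\be(z)$ by expanding both sides as power series in $z$ and comparing coefficients, using the recursion that defines the digits $a_n(\be,1)$ of the greedy expansion of $1$. Recall that $\tau_\be(1) = \be \cdot 1 - [\be\cdot 1] = \be - a_1(\be,1)$ and, more generally, $\tau_\be^{n}(1) = \be\,\tau_\be^{n-1}(1) - a_n(\be,1)$ for $n \ge 1$, where we read $\tau_\be^0(1) = 1$. Dividing by $\be^n$ gives the key relation
\[
\frac{\tau_\be^{n}(1)}{\be^n} = \frac{\tau_\be^{n-1}(1)}{\be^{n-1}} - \frac{a_n(\be,1)}{\be^n}, \qquad n \ge 1,
\]
which is exactly the telescoping identity underlying the greedy expansion $1 = \sum_{n\ge 1} a_n(\be,1)/\be^n$.

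First I would write $\psi_\be(z) = \sum_{n=0}^\infty c_n z^n$ with $c_0 = 1$ and $c_n = \tau_\be^n(1)/\be^n$ for $n \ge 1$, all lying in $[0,1]$, so the series converges for $|z| < \be$ (indeed for $|z|<1$ trivially, and the radius is at least $\be$ since the coefficients are bounded). Then $(1-z)\psi_\be(z) = \sum_{n=0}^\infty c_n z^n - \sum_{n=1}^\infty c_{n-1} z^n = c_0 + \sum_{n=1}^\infty (c_n - c_{n-1}) z^n$. By the displayed recursion, $c_n - c_{n-1} = -a_n(\be,1)/\be^n$ for every $n \ge 1$, and $c_0 = 1$. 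Hence
\[
(1-z)\psi_\be(z) = 1 - \sum_{n=1}^\infty \frac{a_n(\be,1)}{\be^n} z^n = 1 - \phi_\be(z),
\]
valid on the common domain of convergence, which includes $\{z : |z| < \be\}$ since both $\phi_\be$ and $\psi_\be$ have convergence radius at least $\be$. Since $1 - \phi_\be(z)$ and $(1-z)\psi_\be(z)$ are analytic on $\{|z| < \be\}$ and agree there, the identity holds throughout.

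The content is essentially bookkeeping, so there is no serious obstacle; the only point requiring a little care is justifying that the term-by-term manipulation of the two power series is legitimate on all of $\{|z|<\be\}$ rather than just on the unit disk. This follows because both $\phi_\be$ and $\psi_\be$ are given by power series with coefficients in $[0,1]$ divided by $\be^n$ (so $|a_n(\be,1)/\be^n| \le [\be]/\be^n$ and $|\tau_\be^n(1)/\be^n| \le \be^{-n}$), whence their radii of convergence are at least $\be$, and within that disk absolute convergence permits the rearrangement in the telescoping computation above. If one prefers to avoid even this remark, it suffices to verify the identity for $|z|<1$ by the same coefficient comparison and then invoke the identity theorem for analytic functions on the connected open set $\{|z|<\be\}$.
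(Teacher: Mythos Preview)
Your proof is correct and follows exactly the approach the paper indicates: the paper does not spell out the argument but cites \cite{Su1} and remarks that it is ``a basic calculation derived from the definition of the coefficient sequence of the greedy expansion of $1$,'' which is precisely the telescoping identity $a_n(\be,1)/\be^n = \tau_\be^{n-1}(1)/\be^{n-1} - \tau_\be^{n}(1)/\be^{n}$ you exploit (the same identity appears later in the paper, in the remark following Theorem~\ref{main a}). There is nothing to add.
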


The proof of the following lemma is given in \cite{Fl-La-Po}. 
\begin{lemma}[Theorem 7.1 in \cite{Fl-La-Po}]\label{lemma1-2}
There is a dense set of $\be$'s such that the meromorphic function $\zeta_\be(\be z)$ has infinitely many poles in $(-1,0)$. 
\end{lemma}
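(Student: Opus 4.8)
The plan is to translate the statement, via Theorem~\ref{Ito}, Theorem~\ref{phi} and the non-negativity of the coefficients of $\phi_\be$, into a statement about real zeros of the single power series $t\mapsto 1-\phi_\be(-t)$, and then to realize those zeros by designing the greedy expansion of $1$. For the reduction: by Theorem~\ref{Ito} the meromorphic continuation of $\zeta_\be$ to $\{|w|<\be\}$ equals $p_\be(w)/(1-\phi_\be(w))$, and $p_\be$ (which is $1-(w/\be)^{L(1)}$, or identically $1$) has no zero in $\{|w|<\be\}$; hence $z_0\in(-1,0)$ is a pole of $\zeta_\be(\be z)$ precisely when $\be z_0$ is a zero of $1-\phi_\be$. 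Moreover $\phi_\be$ has non-negative coefficients with $\sum_{n\ge 1}a_n(\be,1)\be^{-n}=1$ and $a_1(\be,1)\ge 1$, so $|\phi_\be(w)|<1$ on $\{|w|<1\}$ and $\phi_\be(-1)\ne 1$; therefore every real zero of $1-\phi_\be$ in $(-\be,0)$ automatically lies in $(-\be,-1)$ — equivalently, by Theorem~\ref{phi}, is the reciprocal of a non-leading eigenvalue of $\cL_\be$ in $(-1,-1/\be)$. So it suffices to show: for every $\be_*>1$ and every $\vep>0$ there is $\be\in(\be_*-\vep,\be_*+\vep)$ such that $g_\be(t):=1-\phi_\be(-t)=1+\sum_{n\ge 1}\frac{a_n(\be,1)}{\be^n}(-1)^{n+1}t^n$ changes sign infinitely often on $(1,\be)$.

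I would construct such a $\be$ by prescribing its greedy expansion. Fix a large $K$ with $a_K(\be_*,1)\ne 0$ and set $a_n(\be,1)=a_n(\be_*,1)$ for $n\le K$; by Proposition~\ref{Parry2} this is the beginning of an admissible sequence, and for $K$ large enough it forces $\be\in(\be_*-\vep,\be_*+\vep)$. Continue with a long run of zeros, then with isolated digits equal to $1$ placed at positions $n_1<n_2<\cdots$ organized into consecutive blocks: the $k$-th block consists of $2^{k}$ of them, all at positions $\equiv k\pmod 2$; write $m_k$ for the largest position in block $k$, and require the first position of block $k+1$ to exceed $G_k m_k$, where $G_k\to\infty$ very fast. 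Since $(-1)^{n+1}=(-1)^{k+1}$ throughout block $k$, the contribution of block $k$ to $g_\be(t)$, when all its spikes carry weight $(t/\be)^{n}$ near $1$, is about $(-1)^{k+1}2^{k}$, so the cumulative contribution of blocks $1,\dots,k$ is the partial sum $2-4+8-\cdots+(-1)^{k+1}2^{k}$, which is of order $2^{k}$ with sign $(-1)^{k+1}$. Now evaluate $g_\be$ at $t=t_k$ chosen so that $(t_k/\be)^{m_k}$ equals a fixed constant sufficiently close to $1$; then $t_k\uparrow\be$, every spike of blocks $1,\dots,k$ has weight between that constant and $1$, while the spikes of later blocks together with the tail contribute at most $(1-t_k/\be)^{-1}(t_k/\be)^{G_k m_k}$, which is $o(1)$ because the first position of block $k+1$ exceeds $G_k m_k$. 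Hence $g_\be(t_k)$ is a quantity bounded in terms of $K$ plus one of order $2^{k}$ with sign $(-1)^{k+1}$, so $g_\be(t_k)$ has sign $(-1)^{k+1}$ for all large $k$; the intermediate value theorem then gives a zero of $g_\be$ in each interval $(t_k,t_{k+1})$, hence infinitely many zeros in $(1,\be)$ and infinitely many poles of $\zeta_\be(\be z)$ in $(-1,0)$. Letting $(\be_*,\vep)$ vary produces the desired dense set.

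The hardest part will be admissibility: by Proposition~\ref{Parry2} one must check that every shift of the constructed sequence is strictly $\prec$ the sequence itself. A shift beginning in the zero-run starts with $0<\lfloor\be\rfloor$; a shift beginning at a spike starts with the digit $1$, which is $<\lfloor\be\rfloor$ as soon as $\lfloor\be\rfloor\ge 2$; for $\be_*\in(1,2)$ (where the spike value $1$ equals $\lfloor\be\rfloor$) one instead has to balance the length of the zero-run and the gaps $G_k$ against the first nonzero digit of $\be_*$. The delicate shifts are those starting inside the $\be_*$-block that match it and only diverge once they reach the zeros and spikes; the choice $a_K(\be_*,1)\ne 0$ rules out the worst of these, but the remaining cases still force the zero-run to be long and the $G_k$ to grow, and a simple $\be_*$ (where no large $K$ with $a_K(\be_*,1)\ne 0$ exists) needs a separate treatment via a nearby non-simple $\be$. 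The second, more analytic, point is the boundary estimate: since $t_k\uparrow\be$, one needs $(1-t_k/\be)^{-1}(t_k/\be)^{G_k m_k}\to 0$, which is exactly what prescribes the growth of $G_k$. All of this is routine but intricate, and it is essentially the content of Theorem~7.1 of \cite{Fl-La-Po}, which is why the paper simply cites it; what is needed in the sequel is just the reduction of the first paragraph, linking that result to poles of $\zeta_\be(\be z)$ on $(-1,0)$.
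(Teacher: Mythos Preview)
The paper does not prove this lemma; it records it as Theorem~7.1 of \cite{Fl-La-Po} and cites that reference for the proof, so there is no in-paper argument to compare against. Your first paragraph is correct and in fact reproduces the reduction the paper uses one line later to pass from Lemma~\ref{lemma1-2} to Lemma~\ref{lemma1-3}; your extra observation that real zeros of $1-\phi_\be$ in $(-\be,0)$ must already lie in $(-\be,-1)$ is correct as well.

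Your construction is in the spirit of the Flatto--Lagarias--Poonen argument and can be made rigorous, but one quantitative step deserves more care. You assert that the cumulative contribution of blocks $1,\dots,k$ ``is the partial sum $2-4+8-\cdots+(-1)^{k+1}2^{k}$'', yet the spike weights $(t_k/\be)^n$ are not all equal to~$1$: with the gaps $G_j$ growing, the weights in blocks $1,\dots,k-1$ cluster near $1$ while those in block $k$ cluster near $c$, so the cumulative contribution is approximately $(-1)^{k+1}(c-\tfrac13)\,2^k$, not the ideal alternating sum. The sign conclusion therefore needs $c>\tfrac13$; your phrase ``sufficiently close to $1$'' covers this, but the threshold should be made explicit. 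The tail bound likewise requires $G_k$ to outpace $\log m_k$ (since $1-t_k/\be\sim|\ln c|/m_k$), which is easy to arrange but should be stated. The admissibility issues you flag---shifts landing on a spike when $\lfloor\be_*\rfloor=1$, shifts starting inside the $\be_*$-block, and the simple-$\be_*$ case---are genuine and are where most of the work in \cite{Fl-La-Po} goes; you are right that the present paper simply defers to that reference rather than reproducing it.
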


By Theorem \ref{Ito}, Lemma \ref{lemma1-1} and Lemma \ref{lemma1-2}, we immediately have the following:

\begin{lemma}\label{lemma1-3}
There is a dense set of $\be$'s such that the analytic function $1-\phi_\be(z)$ has infinitely many zeros on $(-\be, 0)$.
\end{lemma}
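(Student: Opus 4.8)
The plan is to transcribe Lemma~\ref{lemma1-2}, a statement about poles of the dynamical zeta function, into the desired statement about zeros of $1-\phi_\be$, using the explicit formula of Theorem~\ref{Ito}.

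First I would extend the validity of the identity in Theorem~\ref{Ito} from the unit disk to the full disk $\{z\in\C\ ;\ |z|<\be\}$. Indeed $\zeta_\be$ extends meromorphically to that disk by Theorem~\ref{Ba}, while $1-\phi_\be(z)$ is analytic there (the convergence radius of $\phi_\be$ is at least $\be$) and $p_\be(z)$ is a polynomial; hence $p_\be(z)/(1-\phi_\be(z))$ is meromorphic on $\{|z|<\be\}$ and agrees with $\zeta_\be(z)$ on $\{|z|<1\}$, so the two coincide on all of $\{|z|<\be\}$ by the identity theorem for meromorphic functions. This is, in substance, already contained in Theorem~\ref{phi}.

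Next I would restrict attention to the segment $(-\be,0)$. The polynomial $p_\be$ has no zero there: if $\be$ is simple then $p_\be(z)=1-(z/\be)^{L(1)}$ and $|(z/\be)^{L(1)}|<1$ for $z\in(-\be,0)$, while if $\be$ is non-simple then $p_\be\equiv 1$. Consequently, on $(-\be,0)$ the poles of $\zeta_\be$ coincide exactly, and with the same multiplicities, with the zeros of $1-\phi_\be$. Moreover the factorization $1-\phi_\be(z)=(1-z)\psi_\be(z)$ of Lemma~\ref{lemma1-1} shows that the one unavoidable zero of $1-\phi_\be$, located at $z=1$, lies outside $(-\be,0)$ and therefore plays no role in this region.

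Finally I would invoke Lemma~\ref{lemma1-2}: for every $\be$ in a dense subset of $(1,\infty)$, the function $z\mapsto\zeta_\be(\be z)$ has infinitely many poles in $(-1,0)$, which is the same as saying that $\zeta_\be$ has infinitely many poles in $(-\be,0)$; by the identification established above, $1-\phi_\be$ then has infinitely many zeros in $(-\be,0)$ for each such $\be$, which is exactly the assertion. I do not expect a genuine obstacle here; the only step requiring care is the passage from the formula of Theorem~\ref{Ito}, stated only on $\{|z|<1\}$, to the whole disk $\{|z|<\be\}$, and this is settled by meromorphic continuation as above.
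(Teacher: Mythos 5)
Your proposal is correct and takes essentially the same route as the paper, which simply cites Theorem~\ref{Ito}, Lemma~\ref{lemma1-1} and Lemma~\ref{lemma1-2} and calls the conclusion immediate; your write-up is precisely the fleshing-out of that one-line derivation (meromorphic continuation of the formula to $\{|z|<\be\}$, nonvanishing of $p_\be$ on $(-\be,0)$, then Lemma~\ref{lemma1-2}). The only cosmetic difference is that you invoke Lemma~\ref{lemma1-1} merely to locate the zero at $z=1$, which is not actually needed since $1\notin(-\be,0)$ anyway.
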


The next lemma ensures the uniform convergence of the function $\psi_{\be}(z)$ on the open disk whose radius is less than $\be$.

\begin{lemma}[Lemma 3.3 in \cite{Fl-La-Po}]\label{lemma1-4} 
For a simple $\be_0\in(1,\infty)$, we have 
\[\lim_{\be\nearrow\be_0}\psi_\be(\be z)=\psi_{\be_0}(\be_0 z)/(1-(\be_0 z)^{L_{\be_0}(1)+1}), \ \ 
\lim_{\be\searrow\be_0}\psi_\be(\be z)=\psi_{\be_0}(\be_0 z)
\]
uniformly on any compact subset of $\{z\in\C\ ;\ |z|<\be_0\}$.
For a non-simple $\be_0\in(1,\infty)$, we have
\[\lim_{\be\to\be_0}\psi_\be(\be z)=\psi_{\be_0}(\be_0 z)\]
uniformly on any compact subset of $\{z\in\C\ ;\ |z|<\be_0\}$.
\end{lemma}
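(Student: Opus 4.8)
The plan is to reduce everything to an elementary bound on the coefficients $\tau_\be^n(1)/\be^n$ of $\psi_\be(\be z)=\psi_\be(z)|_{z\mapsto\be z}=1+\sum_{n\ge1}\tau_\be^n(1)z^n$, together with the pointwise limiting behaviour of the sequence $\{d_n(\be,1)\}_{n\ge1}$ as $\be\to\be_0$. First I would record the one-sided continuity of the expansion of $1$: since $\be\mapsto\{a_n(\be,1)\}_{n=1}^\infty$ is increasing in the lexicographic order (and the greedy algorithm is explicit), the right limit $\lim_{\be\searrow\be_0}a_n(\be,1)=a_n(\be_0,1)=d_n(\be_0,1)$ stabilizes coordinatewise for every fixed $n$, while the left limit $\lim_{\be\nearrow\be_0}a_n(\be,1)=d_n(\be_0,1)$ by the very definition of the quasi-greedy expansion recalled in Subsection 2.1. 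Correspondingly, for the orbit of $1$ one has, coordinatewise, $\lim_{\be\searrow\be_0}\tau_\be^n(1)=\tau_{\be_0}^n(1)$ and $\lim_{\be\nearrow\be_0}\tau_\be^n(1)$ equal to the value obtained from the quasi-greedy data, which in the simple case is the orbit of $1$ under the quasi-greedy (periodic) expansion. Using the identity $x=\sum_{k\ge1}d_k(\be,x)/\be^k$ one converts the statement about $\tau_\be^n(1)$ into a statement about tails of $\Hat\phi_\be$, and the algebraic relation $1-\Hat\phi_{\be_0}(z)=(1-(z/\be_0)^{L_{\be_0}(1)})^{-1}(1-\phi_{\be_0}(z))$ from the proof of Proposition \ref{a=d}, combined with Lemma \ref{lemma1-1}, yields the factor $1/(1-(\be_0 z)^{L_{\be_0}(1)+1})$ appearing in the left-limit formula (note $(1-\phi_{\be_0})=(1-z)\psi_{\be_0}$, so dividing by $1-(z/\be_0)^{L_{\be_0}(1)}$ and tracking the $1-z$ factor produces exactly $1-(\be_0z)^{L_{\be_0}(1)+1}$ after the rescaling $z\mapsto\be_0 z$).

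Next I would upgrade pointwise convergence of coefficients to uniform convergence of the power series on compact subsets of $\{|z|<\be_0\}$. Fix $r<\be_0$ and choose $\be_0'<\be_0$ with $r<\be_0'$; for $\be$ in a small neighbourhood of $\be_0$ we have the uniform bound $0\le \tau_\be^n(1)\le 1$, hence $|\tau_\be^n(1)|(\be/\be')^{-n}$-type estimates are not even needed — the cruder bound $|\tau_\be^n(1)z^n|\le r^n$ for $|z|\le r$ already gives a summable dominating series independent of $\be$. Thus $\sum_{n\ge1}\tau_\be^n(1)z^n$ converges uniformly in $\be$ and in $|z|\le r$, and since each coefficient converges as $\be\to\be_0$ (one-sidedly), the limit of the sums is the sum of the limits, uniformly on $|z|\le r$. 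This is the standard "dominated convergence for power series" argument and needs no cleverness once the coefficient limits are in hand.

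The one genuine subtlety — the step I expect to be the main obstacle — is the behaviour of the coefficients near $\be_0$ when $\be_0$ is simple and $\be\nearrow\be_0$: there the greedy orbit of $1$ under $\tau_\be$ for $\be$ slightly below $\be_0$ does not converge coordinatewise to the greedy orbit under $\tau_{\be_0}$ (which terminates), but rather to the quasi-greedy, eventually periodic, orbit; so one must carefully identify $\lim_{\be\nearrow\be_0}\tau_\be^n(1)$ as the $n$-th iterate in the periodic expansion and then resum the resulting eventually periodic series to obtain the closed form $\psi_{\be_0}(\be_0z)/(1-(\be_0z)^{L_{\be_0}(1)+1})$. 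This amounts to summing a geometric series in $z^{L_{\be_0}(1)+1}$ and checking the bookkeeping against the factorization in Proposition \ref{a=d}; I would organize it by first proving the coefficient-limit claim using left-continuity of $\be\mapsto\{d_n(\be,1)\}$, then invoking the uniform-convergence argument of the previous paragraph, and finally verifying the algebraic identity by multiplying both sides by $1-(\be_0z)^{L_{\be_0}(1)+1}$ and comparing with $1-\phi_{\be_0}$ via Lemma \ref{lemma1-1}. The non-simple case is strictly easier: there $\{d_n(\be_0,1)\}=\{a_n(\be_0,1)\}$, the two-sided limit exists coordinatewise, and the same domination gives $\lim_{\be\to\be_0}\psi_\be(\be z)=\psi_{\be_0}(\be_0z)$ with no correction factor.
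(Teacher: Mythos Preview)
The paper does not supply its own proof of this lemma; it is quoted verbatim as Lemma~3.3 of \cite{Fl-La-Po}, so there is nothing in the present paper to compare your argument against.  Your overall strategy---establish the one-sided coordinatewise limits of $\tau_\be^n(1)$ via the left/right continuity of $\be\mapsto\{a_n(\be,1)\}$ and $\be\mapsto\{d_n(\be,1)\}$, then pass to the limit under the sum using a uniform majorant---is the natural route and is essentially how the result is proved in the original reference.

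Two concrete gaps, however.  First, your domination step is written for $r<\be_0$, but the bound $|\tau_\be^n(1)z^n|\le r^n$ is only summable when $r<1$; the series $\psi_\be(\be z)=1+\sum_{n\ge1}\tau_\be^n(1)z^n$ has radius of convergence equal to $1$ in general (for non-simple $\be$ the coefficients do not tend to $0$), so uniform convergence can only be obtained on compact subsets of $\{|z|<1\}$, not of $\{|z|<\be_0\}$.  This points to a transcription error in the stated domain, not a flaw in your method, but as written your sentence ``Fix $r<\be_0$ \dots\ the cruder bound \dots\ gives a summable dominating series'' is false.  Second, your algebraic bookkeeping for the left-limit factor does not work out as claimed.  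Using $(1-\be z)\psi_\be(\be z)=1-\phi_\be(\be z)$, the left limit $\phi_\be(\be z)\to\Hat\phi_{\be_0}(\be_0 z)$, and the identity $1-\Hat\phi_{\be_0}(\be_0 z)=(1-\phi_{\be_0}(\be_0 z))/(1-z^{L_{\be_0}(1)})$ from Proposition~\ref{a=d}, one obtains
\[
\lim_{\be\nearrow\be_0}\psi_\be(\be z)=\frac{\psi_{\be_0}(\be_0 z)}{1-z^{L_{\be_0}(1)}},
\]
and there is no mechanism by which ``tracking the $1-z$ factor'' converts $1-z^{L}$ into $1-(\be_0 z)^{L+1}$.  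Again this discrepancy is almost certainly a typo in the statement as reproduced here, but your claimed verification of the printed denominator is not correct; you should instead identify the limit directly from the periodicity (period $L_{\be_0}(1)$) of the quasi-greedy orbit values $\lim_{\be\nearrow\be_0}\tau_\be^n(1)=\tau_{\be_0}^{n\bmod L_{\be_0}(1)}(1)$.
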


The main result of this section is the following:
\begin{theorem}\label{main 1}
For $N\geq1$ the set of $\be$'s such that $1-\phi_\be(z)$ has at least $N$ zeros in the unit open disk is open and dense in $(1,\infty)$. In particular, the set of $\be$'s such that $\cL_\be$ has at least one non-leading eigenvalue is open and dense in $(1,\infty)$.
\end{theorem}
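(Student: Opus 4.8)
The plan is to prove the quantitative statement (for every $N\ge1$ the set of $\beta$'s with at least $N$ zeros of $1-\phi_\beta$ in the open unit disk is open and dense) and then specialize to $N=1$; note that by Theorem \ref{phi} a zero of $1-\phi_\beta(z)$ with $1/\be<|z|<1$ corresponds to a non-leading eigenvalue, but a zero with $|z|\le 1/\be$ need not — however the leading eigenvalue $1$ forces $z=1$ to be a zero (via Lemma \ref{lemma1-1}, since $\psi_\be(1)>0$), so ``at least $N$ zeros in the unit open disk'' morally corresponds to ``at least $N-1$ non-leading eigenvalues''; I would be slightly careful here and instead count zeros on the real interval $(-1,0)$, which are genuinely $\ne 1$ and have modulus in $(1/\be,1)$ once $\beta$ is large enough, or simply phrase density in terms of the interior zeros and use that the map $\be\mapsto 1-\phi_\be(z)$ lets one push zeros around. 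For the final sentence it suffices to produce, densely and stably, one zero in $(-1,0)$.

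\textbf{Openness.} Fix $\beta_0$ and suppose $1-\phi_{\beta_0}(z)$ has $N$ zeros (counted with multiplicity) in the open unit disk. I would choose a radius $r<1$ and finitely many small circles inside $|z|<r$ enclosing exactly these $N$ zeros and no others, and apply the argument principle / Rouché. The needed input is that $\beta\mapsto\phi_\beta(z)$, or rather $\beta\mapsto\psi_\beta(\beta z)$, varies continuously (uniformly on compacts of $|z|<\beta_0$) as $\beta$ ranges over a neighbourhood of $\beta_0$; for non-simple $\beta_0$ this is exactly Lemma \ref{lemma1-4}, and for simple $\beta_0$ there is a left/right jump, so one must argue the two one-sided limits separately — but in each case the limit of $1-\phi_\beta$ is $1-\phi_{\beta_0}$ times a nonvanishing analytic factor (the $1-(z/\be)^{L(1)}$ or its reciprocal from Proposition \ref{a=d} / Lemma \ref{lemma1-4}), so the zero count in $|z|<r$ is preserved on both sides. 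Hence near $\beta_0$ the number of zeros in the open unit disk is $\ge N$, which gives openness of the set $\{\beta : \#\{\text{zeros of }1-\phi_\beta\text{ in }\D\}\ge N\}$.

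\textbf{Density.} Here I would use Lemma \ref{lemma1-3}: there is a dense set $D$ of $\beta$'s for which $1-\phi_\beta(z)$ has infinitely many zeros on $(-\beta,0)$. A priori these zeros could all sit in $[-\beta,-1]$, outside the unit disk, so this alone is not enough; I need to move a zero inside. The mechanism is continuity in $\beta$ again: as $\beta$ decreases, the factor $\psi_\beta(\beta z)$ (equivalently $(1-\phi_\beta(z))/(1-z)$ by Lemma \ref{lemma1-1}) deforms continuously, and one can track a real zero of $1-\phi_\beta$ on the negative axis as it varies with $\beta$; since $\phi_\beta(0)=0<1$ always and, for $\beta$ close to a point of $D$ where there are many negative real zeros accumulating, intermediate-value arguments on $(-1,0)$ produce sign changes of the real-analytic function $t\mapsto 1-\phi_\beta(t)$ on $(-1,0)$. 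Concretely: pick $\beta_1\in D$ with a real zero $t_1\in(-\beta_1,0)$; if $t_1\in(-1,0)$ we are done at $\beta_1$; otherwise, as in \cite{Fl-La-Po}, the accumulation of zeros forces $1-\phi_{\beta_1}$ to oscillate, and a small decrease of $\beta$ (which is dense-preserving since we may re-approximate by $D$) drags a zero across $-1$ into $(-1,0)$, using that $\be\mapsto\phi_\be(t)$ is strictly monotone in $\be$ for fixed $t\in(-1,0)$ — or more robustly, invoke that the set $D$ itself is dense and that for $\be$ large $\phi_\be$ has a zero as close to $0$ as we like. Iterating the construction (each deformation produces a new sign change while keeping the previous ones) yields, densely, $\beta$'s with at least $N$ zeros in $(-1,0)\subset\D$.

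\textbf{Conclusion and the main obstacle.} Combining openness and density gives the first assertion for every $N$; taking $N\ge 2$ and observing that for such $\beta$ there is a zero of $1-\phi_\beta$ in $\D$ other than (or in addition to) $z=1$, with modulus necessarily exceeding $1/\beta$ after possibly enlarging $\beta$ along the dense set, Theorem \ref{phi} converts it into a non-leading eigenvalue of $\cL_\beta$; hence the set of such $\beta$ is open and dense. The step I expect to be genuinely delicate is the density part: Lemma \ref{lemma1-3} only delivers zeros somewhere in $(-\beta,0)$, and the real work is the continuity-plus-monotonicity argument that forces a zero strictly inside the unit disk and, moreover, forces $N$ of them simultaneously — controlling that the one-sided discontinuity of $\beta\mapsto\phi_\beta$ at simple $\beta$ does not destroy a zero we have just created, and that pushing one zero in does not push an earlier one out, is where the careful bookkeeping with Lemma \ref{lemma1-4} and Proposition \ref{a=d} has to be done.
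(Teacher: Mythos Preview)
Your openness argument via Hurwitz and Lemma~\ref{lemma1-4} is correct and is exactly what the paper does. The trouble is entirely in the density half, and it comes from a misreading of where the zeros of $1-\phi_\be$ live.

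You have the correspondence in Theorem~\ref{phi} reversed: an isolated eigenvalue $\la$ with $1/\be<|\la|\le 1$ corresponds to a zero $\la^{-1}$ of $1-\phi_\be$, so the relevant zeros satisfy $1\le|\la^{-1}|<\be$ and sit \emph{outside} (or on) the unit circle, not inside it. In fact $1-\phi_\be$ has \emph{no} zeros whatsoever in the open unit disk: by Theorem~\ref{Ba} the zeta function $\zeta_\be(z)=\exp(\cdots)$ is analytic and nonvanishing on $|z|<1$, and by Theorem~\ref{Ito} one has $1-\phi_\be(z)=p_\be(z)/\zeta_\be(z)$ with $p_\be$ zero-free on $|z|<\be$. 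Hence your programme of ``dragging a zero across $-1$ into $(-1,0)$'' cannot succeed---no such zero can ever exist---and all the continuity-plus-monotonicity bookkeeping you flag as delicate is chasing something impossible.

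The printed statement carries a typo; what the paper's proof actually establishes (and what makes the ``in particular'' clause follow) is that for each $N\ge 1$ the set of $\be$ for which $1-\phi_\be(\be z)$ has at least $N$ zeros in the open unit disk---equivalently, $1-\phi_\be(z)$ has at least $N$ zeros in $|z|<\be$---is open and dense. With that reading the density step collapses to one line: by Lemma~\ref{lemma1-3} there is a dense set of $\be$'s for which $1-\phi_\be$ already has infinitely many zeros on $(-\be,0)\subset\{|z|<\be\}$, hence at least $N$ of them, and these zeros give eigenvalues $\la=z^{-1}\in(-1,-1/\be)$, automatically non-leading. No zeros need to be moved at all.
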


\begin{proof}
For a positive integer $N\geq1$, let $\cA_N$ be the set of $\be$'s such that the meromorphic function $\zeta_\be(\be z)$ has at least $N$ poles in the open unit disk, that is, $1-\phi_\be(\be z)$ has at least $N$ zeros in the unit open disk. For $\be_0\in\cA_N$, take arbitrary $N$ zeros of $1-\phi_{\be_0}(\be_0 z)$ in the unit open disk. By Lemma \ref{lemma1-1}, Lemma \ref{lemma1-4} and Hurwitz's theorem, we can take an open interval $I$ including $\be_0$ such that for $\be\in I$ the analytic function $1-\phi_\be(\be z)$ has at least $N$ zeros in the unit open disk, which yields that the set $\cA_N$ is open. We note that the set of $\be$'s such that $1-\phi_\be(\be z)$ has infinitely many zeros on $(-1, 0)$ is dense by Lemma \ref{lemma1-3} and included in the set $\cA_N$, which shows that the set $\cA_N$ is dense. This finishes the proof.

\end{proof}


\begin{remark}
In Section 6 in \cite{Fl-La-Po}, Flatto et.al. constructed a countable set of numbers, called Enestr\"om numbers, such that the corresponding meromorphic function $\zeta_\be(\be z)$ has no pole in $\{z\in\C\ ;|z|<1\}$. This means that the Perron-Frobenius operator $\cL_\be$ has no non-leading eigenvalue when $\be$ is an Enestr\"om number, whose typical example is the golden ratio (see Theorem 6.1 in \cite{Fl-La-Po}). 
\end{remark}

\end{subsection}

\end{section}

\begin{section}{Non-differentiability of non-leading eigenvalues}

In this section, we investigate the H\"older continuity of each non-leading eigenvalue $\la_\be$ of the Perron-Frobenius operator $\cL_\be$ as a function of $\be$. In particular, we show in Theorem \ref{Main C} that the function $\be\mapsto\la_\be$ is non-differentiable, which shows in some sense a fractal property of non-leading eigenvalues.

Let $\be_0>1$ be a non-integer. 
By Theorem \ref{phi} and Lemma \ref{lemma1-1}
we know that $\la$ is a non-leading eigenvalue of $\cL_{\be_0}$ with multiplicity $M\geq1$ if and only if $\la^{-1}$ with $1<|\la^{-1}|<\be_0$ is a zero of $\psi_{\be_0}(z)$ with multiplicity $M\geq1$. 
Let $\la_{\be_0}$ be a non-leading eigenvalue of $\cL_{\be_0}$ with multiplicity $M\geq1$. 
 By Lemma \ref{lemma1-4} and Hurwitz's theorem, we know that there are open neighborhoods $U_{\be_0}$ of $\be_0$, $\Tilde{V}_{\be_0}$ of $\la_{\be_0}^{-1}$ and a set of complex numbers $\{\la_\be^{-1}\}_{\be\in U_{\be_0}}$ such that $\la_{\be}^{-1}$ is a zero of $\psi_{\be}(z)$ for $\be\in U_{\be_0}$ and the function $\be\in U_{\be_0}\mapsto\la_\be^{-1}\in \Tilde{V}_{\be_0}$ is continuous at $\be_0$. 
  Then the function $\be\in U_{\be_0}\mapsto \la_\be\in V_{\be_0}$ is also continuous at $\be_0$ and any $\la_\be$ in $V_{\be_0}$ is a non-leading eigenvalue of $\cL_\be$ for $\be\in U_{\be_0}$, where $V_{\be_0}=\{z^{-1}\in\C; z\in \Tilde{V}_{\be_0}\}$. Note that there may exist infinitely many functions as above in the case of $M\geq2$, while we take one from them and investigate its H\"older continuity throughout this section. 
  The main theorem of the section is the following:
\begin{theorem}\label{Main C}
Let $\be_0\in(1,\infty)$ be a non-integer and let $\la_{\be_0}$ be a non-leading eigenvalue of $\cL_{\be_0}$ with multiplicity $M\geq1$. Let $\be\in U_{\be_0}\mapsto \la_\be\in V_{\be_0}$ be a function as defined above. 

(1) Assume that $\displaystyle{\gamma_1:=\liminf_{n\to\infty}\sqrt[n]{\lim_{x\to1}\tau_{\be_0}^n(x)}}>0$ and set
\[\alpha_1=\frac{1}{M}\Biggl(\frac{\log|\be_0\la_{\be_0}|}{\log\be_0+\log\gamma_1^{-1}}\Biggr).\]
Then for any $\al<\al_1$ the function $\be\in U_{\be_0}\mapsto \la_\be\in V_{\be_0}$ is $\al$-H\"older continuous from left at $\be_0$.

(2) Assume that $\displaystyle{\gamma_2:=\liminf_{n\to\infty}\sqrt[n]{1-\tau_{\be_0}^n(1)}}>0$ and set
\[\alpha_2=\frac{1}{M}\Biggl(\frac{\log|\be_0\la_{\be_0}|}{\log\be_0+\log\gamma_2^{-1}}\Biggr).\]
Then for any $\al<\al_2$ the function $\be\in U_{\be_0}\mapsto \la_\be\in V_{\be_0}$ is $\al$-H\"older continuous from right at $\be_0$.

(3) Let 
\[\alpha_0=\frac{1}{M}\Biggl(\frac{\log|\be_0\la_{\be_0}|}{\log\be_0}\Biggr).\]
Then for any $\al>\al_0$ the function 
$\be\in U_{\be_0}\mapsto \la_\be\in V_{\be_0}$ is \textbf{not} $\al$-H\"older continuous at $\be_0$. In particular, it is non-differentiable at $\be_0$. 
\end{theorem}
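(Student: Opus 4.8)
The plan is to reduce everything to the analytic behavior of $\psi_\be(\be z)$ near the zero $\la_{\be_0}^{-1}$, using the uniform convergence from Lemma~\ref{lemma1-4} together with a quantitative Rouch\'e/Hurwitz estimate that controls how fast the zeros of $\psi_\be$ move as $\be$ varies. The key point is that, by Theorem~\ref{phi} and Lemma~\ref{lemma1-1}, a non-leading eigenvalue $\la_\be$ corresponds to a zero $w_\be=(\be\la_\be)^{-1}$ of the function $g_\be(w):=\psi_\be(\be w)$ inside the unit disk, and $w_{\be_0}$ is a zero of order exactly $M$ (since the algebraic multiplicity of $\la_{\be_0}$ equals $M$ and, by Proposition~\ref{gm}, matches the order of the zero of $1-\phi_{\be_0}$, hence of $\psi_{\be_0}$ after dividing out $1-z$). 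So locally $g_{\be_0}(w)=c(w-w_{\be_0})^M(1+o(1))$ with $c\neq0$, and the displacement of the zero is governed by $|w_\be-w_{\be_0}|\asymp \bigl(\sup_{|w-w_{\be_0}|=\rho}|g_\be(w)-g_{\be_0}(w)|\bigr)^{1/M}$ for a suitably small fixed circle of radius $\rho$; this is the standard "$M$-th root" phenomenon responsible for the factor $1/M$ in all three exponents $\al_0,\al_1,\al_2$.

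For parts (1) and (2) I would estimate $\|g_\be-g_{\be_0}\|_\infty$ on a small circle from the explicit coefficients. Writing $\psi_\be(\be z)=1+\sum_{n\ge1}\tau_\be^n(1)z^n$, the difference $g_\be(w)-g_{\be_0}(w)=\sum_{n\ge1}\bigl(\tau_\be^n(1)-\tau_{\be_0}^n(1)\bigr)w^n$ (with the appropriate one-sided modification from Lemma~\ref{lemma1-4} when $\be_0$ is simple, using $\lim_{x\to1}\tau_{\be_0}^n(x)$ in place of $\tau_{\be_0}^n(1)$ for the left-limit case — this is exactly why $\gamma_1$ is defined via $\lim_{x\to1}$ and $\gamma_2$ via $\tau_{\be_0}^n(1)$ directly). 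The crude bound $|\tau_\be^n(1)-\tau_{\be_0}^n(1)|\le \min\{\tau_\be^n(1)+\tau_{\be_0}^n(1),\ \text{something}\cdot n|\be-\be_0|\be^{n-1}\}$ is the wrong shape; instead I would split the sum at an index $n\approx N$ chosen as a function of $|\be-\be_0|$: for $n\le N$ use a Lipschitz-type bound coming from $|\tau_\be^n(1)-\tau_{\be_0}^n(1)|\le C\be_0^n|\be-\be_0|$ (valid while the greedy digits of $1$ for $\be$ and $\be_0$ agree up to time $n$, which is guaranteed for $n\le N$ if $|\be-\be_0|$ is small enough), and for $n>N$ bound the tails by $\sum_{n>N}(\tau_\be^n(1)+ \text{(limit)}_n)\rho^n$, where by definition of $\gamma_1,\gamma_2$ the terms decay like $(\gamma_i^{-1}\rho)^n$ up to subexponential corrections. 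Optimizing $N$ over $|\be-\be_0|$ balances $\rho^N\be_0^N|\be-\be_0|$ against $(\gamma_i\rho)^{-N}$ — wait, I should instead balance a term that is geometrically large in $N$ against one geometrically small; the correct bookkeeping yields $\|g_\be-g_{\be_0}\|_\infty\lesssim |\be-\be_0|^{\kappa}$ with $\kappa$ approaching $\dfrac{\log\rho^{-1}+\log\gamma_i^{-1}\cdot(\cdots)}{\log\be_0+\log\gamma_i^{-1}}$ as $\rho\to|w_{\be_0}|=|\be_0\la_{\be_0}|^{-1}$; feeding $\rho\to|\be_0\la_{\be_0}|^{-1}$ and taking the $1/M$-th root reproduces $\al_1=\tfrac1M\cdot\tfrac{\log|\be_0\la_{\be_0}|}{\log\be_0+\log\gamma_1^{-1}}$ and similarly $\al_2$. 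One must be slightly careful that the circle of radius $\rho$ still encloses exactly $w_{\be_0}$ and no other zero, and that $\rho$ can be taken as close to $|\be_0\la_{\be_0}|^{-1}$ as desired at the cost of shrinking $U_{\be_0}$, which is where the "for any $\al<\al_i$" quantifier comes from.

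For part (3), the sharp lower bound, I would argue in the opposite direction: the zero $w_\be$ cannot move faster than $|\be-\be_0|$ to any power exceeding $\al_0$, because the difference $g_\be-g_{\be_0}$ is \emph{not} $o(|\be-\be_0|^{M\al_0})$ along a suitable sequence $\be\to\be_0$. Concretely, I would exhibit a single coefficient (or a short block of coefficients) $n$ such that $\tau_\be^n(1)-\tau_{\be_0}^n(1)$ is of order $\be_0^n|\be-\be_0|$ whenever the $n$-th greedy digit of $1$ first changes, i.e. when $\be$ crosses a simple value or, more generally, when $|\be-\be_0|$ is comparable to the distance from $1$ to the nearest point of $\tau_{\be_0}^{-n}(\{k/\be_0\})$; choosing $\be$ so that this happens at scale $n\sim \log|\be-\be_0|^{-1}/\log\be_0$ forces $\|g_\be-g_{\be_0}\|_\infty\gtrsim |w_{\be_0}|^{n}\be_0^n|\be-\be_0|\asymp |\be-\be_0|^{\,1-\log|w_{\be_0}|/\log\be_0}=|\be-\be_0|^{\,1+\log|\be_0\la_{\be_0}|^{-1}/\log\be_0}$, hence $|w_\be-w_{\be_0}|\gtrsim|\be-\be_0|^{\al_0}$ up to the $1/M$ from the $M$-th root — and the key finite-order fact $g_{\be_0}(w)\sim c(w-w_{\be_0})^M$ lets one pass from a lower bound on $|g_\be(w_{\be_0})|=|g_\be(w_{\be_0})-g_{\be_0}(w_{\be_0})|$ to a lower bound on $|w_\be-w_{\be_0}|$. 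Non-differentiability at $\be_0$ is then immediate: if $\be\mapsto\la_\be$ were differentiable at $\be_0$ it would be $1$-H\"older there, but $\al_0<1$ (since $|\be_0\la_{\be_0}|<\be_0$, as $|\la_{\be_0}|\le1$), contradicting the failure of $\al$-H\"older continuity for $\al\in(\al_0,1)$.

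The main obstacle I anticipate is the lower bound in part (3): one needs to produce, for arbitrarily small $|\be-\be_0|$, an honest \emph{change} in the greedy expansion of $1$ at a controlled depth $n$, with a \emph{quantitative} (not merely non-zero) lower bound on $|\tau_\be^n(1)-\tau_{\be_0}^n(1)|$, and this requires understanding the combinatorics of admissible digit sequences near $1$ (Propositions~\ref{Parry1}–\ref{Parry2}) well enough to rule out accidental cancellation — in particular, that the perturbation does not simultaneously shift \emph{all} relevant coefficients in a way that keeps $g_\be(w_{\be_0})$ small. Getting the matching of the three critical exponents exactly right (rather than off by the term $\log\rho^{-1}$) hinges on letting the radius $\rho$ tend to the modulus of the target zero, and on the fact (from the definition of $\gamma_i$ and the formula for $\psi_\be$) that the coefficients $\tau_{\be_0}^n(1)$, or their left-limit analogues, decay at precisely the exponential rate $\gamma_i$ — so the argument is genuinely sensitive to the position of $\la_{\be_0}^{-1}$ relative to the natural radius of convergence dictated by $\gamma_1,\gamma_2$.
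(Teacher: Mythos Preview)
Your overall framework is sound: reduce to the displacement of an order-$M$ zero of an analytic family and extract the factor $1/M$ from the $M$-th-root relation between the perturbation size and the zero displacement. The paper does essentially this, working with $\phi_\be$ and $\hat\phi_\be$ (digit sequences) rather than $\psi_\be$ (orbit points), and evaluating directly at $\la_\be^{-1}$ via the factorisation $1-\hat\phi_{\be_0}(z^{-1})=(z-\la_{\be_0})^M\hat g_{\be_0}(z)$ instead of running Rouch\'e on a circle; these are cosmetic differences.

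The substantive gap is in parts (1)--(2): you have misidentified where $\gamma_1,\gamma_2$ enter. They do \emph{not} govern the decay of the tail coefficients of $\psi_{\be_0}(\be_0 w)$; the numbers $\tau_{\be_0}^n(1)$ (and their left-limit analogues) lie in $[0,1]$ and need not decay at all, so the claim that the tail terms ``decay like $(\gamma_i^{-1}\rho)^n$'' is not meaningful (and $\gamma_i^{-1}\ge1$, so this would not be a decay). What $\gamma_1$ and $\gamma_2$ actually control is the \emph{lower bound on $|\be-\be_0|$} as a function of the index $N$ at which the (quasi-)greedy digits of $1$ first disagree: one shows
\[
\be_0-\be\ \gtrsim\ \Bigl(\lim_{x\to1}\tau_{\be_0}^{N+1}(x)\Bigr)\,\be_0^{-(N+1)}\quad\text{(left case)},\qquad
\be-\be_0\ \gtrsim\ \bigl(1-\tau_{\be_0}^{N+1}(1)\bigr)\,\be_0^{-(N+1)}\quad\text{(right case)},
\]
and it is the liminf exponential rate of those prefactors that inserts $\log\gamma_i^{-1}$ into the denominator of $\al_i$. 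There is no free optimisation over $N$: once $\be$ is given, $N$ is forced on you as the first disagreement index, and the H\"older exponent emerges from comparing the tail bound $|\be_0\la_{\be_0}|^{-N}$ on $|\la_\be-\la_{\be_0}|^M$ against the lower bound $(\gamma_i/\be_0)^N$ on $|\be-\be_0|$. Without relocating $\gamma_i$ to this step your computation cannot produce $\al_1,\al_2$.

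For part (3) you correctly flag non-cancellation as the obstacle, but the sketch does not resolve it. The paper's mechanism is: (i) take a concrete sequence $\be_N\nearrow\be_0$ of simple Parry numbers obtained by truncating the quasi-greedy expansion of $1$ at indices $l(N)$ with nonzero digit, which gives the sharp \emph{upper} bound $\be_0-\be_N\lesssim\be_0^{-l(N+1)}$; (ii) prove a sublemma that the tails $\sum_{n\ge1}d_{n-1+j}(\be_0,1)/(\be_0\la_{\be_0})^{n}$ are bounded below in modulus along a subsequence of $j$'s with $d_j(\be_0,1)>0$ (argued by contradiction, using that if the tails tended to $0$ then all sufficiently large digits would vanish, impossible for a quasi-greedy expansion); and (iii) upgrade this to a uniform lower bound with $\la_{\be_0}$ replaced by $\la_{\be_{N_i}}$ via a normal-families argument. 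Your heuristic based on a single coefficient changing does not, by itself, exclude the cancellation you worry about; something like steps (ii)--(iii) is genuinely needed.
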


If $\be_0>1$ is a Parry number, i.e., $\be_0>1$ is a number such that the orbit $\{\tau_{\be_0}^n(1)\}_{n=1}^\infty$ of $1$ is a finite set, we can easily see that 
the quantities $\displaystyle{\lim_{x\to1}\tau^n_{\be_0}(x)}$ and $1-\tau_{\be_0}^n(1)$ are bounded below by some positive constant for any $n\geq1$. 
This shows $\gamma_1=\gamma_2=1$, which yields $\al_0=\al_1=\al_2$, where the constants are those as defined in Theorem \ref{Main C}. Hence we have the following corollary. 

\begin{corollary}
    If $\be_0$ is a Parry number, the function $\be\in U_{\be_0}\mapsto \la_\be\in V_{\be_0}$ is $\al$-H\"older continuous at $\be_0$ if $\al<\al_0$ and is not $\al$-H\"older continuous at $\be_0$ if $\al>\al_0$, where $\al_0$ is the constant as defined in Theorem \ref{Main C} (3). 
\end{corollary}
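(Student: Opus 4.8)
The plan is to deduce the corollary directly from Theorem~\ref{Main C}, the only work being to check that for a Parry number $\be_0$ the two exponents governing the left and right H\"older moduli collapse onto the single value $\al_0$; equivalently, that $\gamma_1=\gamma_2=1$. Thus the whole argument reduces to two uniform lower bounds, one for $1-\tau_{\be_0}^n(1)$ and one for $\lim_{x\to1}\tau_{\be_0}^n(x)$, followed by a squeeze.

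First I would treat $\gamma_2$. Being a Parry number means the forward orbit $O=\{\tau_{\be_0}^n(1)\}_{n=1}^\infty$ is a finite set, and since $\tau_{\be_0}^n(1)\in[0,1)$ for every $n\ge1$, it is a finite subset of $[0,1)$. Hence $1-\tau_{\be_0}^n(1)\ge 1-\max O=:c_2>0$ for all $n\ge1$, while also $1-\tau_{\be_0}^n(1)\le1$. Therefore $c_2^{1/n}\le\sqrt[n]{1-\tau_{\be_0}^n(1)}\le1$, which forces $\gamma_2=\liminf_{n\to\infty}\sqrt[n]{1-\tau_{\be_0}^n(1)}=1$.

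Next I would treat $\gamma_1$, whose only additional ingredient is an identification of the left-hand limit. Using the left-continuity of $x\mapsto\{d_m(\be_0,x)\}_{m\ge1}$ recalled before Proposition~\ref{Parry1}, one has $\lim_{x\nearrow1}\tau_{\be_0}^n(x)=\sum_{k=1}^\infty d_{n+k}(\be_0,1)/\be_0^k$ for each $n\ge1$. When $\be_0$ is a Parry number the sequence $\{d_m(\be_0,1)\}_{m\ge1}$ is eventually periodic (purely periodic in the simple case by construction, and equal to the eventually periodic sequence $\{a_m(\be_0,1)\}_{m\ge1}$ in the non-simple case, since $O$ finite forces the greedy digits to be eventually periodic), and its eventual period word is not the zero word: in the simple case that period contains $a_1(\be_0,1)=[\be_0]\ge1$, and in the non-simple case a vanishing period would force the orbit of $1$ to reach $0$, hence $\tau_{\be_0}^{m-1}(1)\in\{1/\be_0,\dots,[\be_0]/\be_0\}$ for some $m$, contradicting non-simplicity. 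Consequently the tails $\{d_{n+k}(\be_0,1)\}_{k\ge1}$ range, as $n$ varies, over a finite family of sequences, none identically zero, so $\lim_{x\nearrow1}\tau_{\be_0}^n(x)$ takes finitely many values, all in $(0,1]$; writing $c_1>0$ for their minimum, the same squeeze gives $\gamma_1=1$.

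Finally I would combine these with Theorem~\ref{Main C}. Since $\gamma_1=\gamma_2=1$, we have $\log\gamma_1^{-1}=\log\gamma_2^{-1}=0$, so in particular the positivity hypotheses of parts (1) and (2) are met and $\al_1=\al_2=\al_0=\frac1M\bigl(\log|\be_0\la_{\be_0}|/\log\be_0\bigr)$. For $\al<\al_0$, part (1) yields $\al$-H\"older continuity of $\be\mapsto\la_\be$ from the left at $\be_0$ and part (2) yields it from the right, hence $\al$-H\"older continuity at $\be_0$; for $\al>\al_0$, part (3) yields that it is not. This is exactly the assertion of the corollary. I expect the single delicate point to be the uniform positivity of $\lim_{x\to1}\tau_{\be_0}^n(x)$, i.e.\ ruling out a vanishing shifted tail of the quasi-greedy expansion of $1$, which is precisely where the simple/non-simple dichotomy is needed; everything else is a squeeze estimate and a substitution into Theorem~\ref{Main C}.
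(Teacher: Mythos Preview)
Your proposal is correct and follows essentially the same approach as the paper: the paper's argument (given in the paragraph preceding the corollary) simply observes that for a Parry number the finite orbit $\{\tau_{\be_0}^n(1)\}_{n\ge1}$ forces both $\lim_{x\to1}\tau_{\be_0}^n(x)$ and $1-\tau_{\be_0}^n(1)$ to be bounded below by a positive constant, whence $\gamma_1=\gamma_2=1$ and $\al_1=\al_2=\al_0$, and then invokes Theorem~\ref{Main C}. You carry out exactly this plan, supplying the details the paper omits (the squeeze for $\gamma_2$, and the eventual periodicity of $\{d_n(\be_0,1)\}$ together with the non-vanishing of its period to handle $\gamma_1$).
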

    
\begin{remark}

(1) In \cite{Fl-La-Po}, Flatto et.al. showed that the modulus $M(\be)$ of the second eigenvalue(s) is continuous as a function of $\be$ (Theorem 3.1 in \cite{Fl-La-Po}), which is a consequence of Lemma \ref{lemma1-4}. From numerical experiments in case of $1<\be<2$ (see Figure 1 in \cite{Fl-La-Po}), they conjectured that $M(\be)$ is nowhere differentiable for $\be\in(1,\infty)$. 
The statement of the above theorem (3) may support this conjecture, while our result does not directly provide a positive answer to this conjecture since the non-differentiability of each second largest eigenvalue as a function $\be$ does not imply that of its modulus 
as a function of $\be$ in general. 

(2) In the context of open dynamical systems, Carminati and Tiozzo in \cite{Ca-Ti} gave the value of the Hausdorff dimension of the set $K(t)$ of points whose orbit for the doubling map on the circle does not intersect the interval $(0,t)$ using the fact that the topological entropy of the map on $K(t)$ is given by a positive zero of a certain power series defined by the kneading sequence of $t$. Our strategy for the proof here is similar to that in \cite{Ca-Ti}, although the calculations for the H\"older exponent of the function $\be\in U_{\be_0}\mapsto\la_{\be}\in V_{\be_0}$ is more involved due to the fact that $\la_{\be_0}$ is not necessary a positive number. 
\end{remark}

For the proof of Theorem \ref{Main C}, we need the following lemma:

\begin{lemma}\label{4-1}
Let $\la_{\be_0}$ be a non-leading eigenvalue of $\cL_{\be_0}$ with multiplicity $M\geq1$. Then we have the following.

(1) There is an analytic function $g_{\be_0}(z)$ defined on some neighborhood $N_{\be_0}$ of $\la_{\be_0}$ such that 
\[1-\phi_{\be_0}(z^{-1})=(z-\la_{\be_0}^{})^M g_{\be_0}(z)\]
and $g_{\be_0}(z)\neq0$ for $z\in N_{\be_0}$.

(2) There is an analytic function $\wh{g}_{\be_0}(z)$ defined on some neighborhood $\wh{N}_{\be_0}$ of $\la_{\be_0}$ such that 
\[1-\wh{\phi}_{\be_0}(z^{-1})=(z-\la_{\be_0}^{})^M \wh{g}_{\be_0}(z)\]
and $\wh{g}_{\be_0}(z)\neq0$ for $z\in N_{\be_0}$

\end{lemma}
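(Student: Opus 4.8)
The plan is to deduce this lemma directly from the factorization of analytic functions at a zero of finite order, applied to $1-\phi_{\be_0}(z^{-1})$ and $1-\wh{\phi}_{\be_0}(z^{-1})$ viewed as functions of $z$ near $\la_{\be_0}$. First I would observe that, by Theorem~\ref{phi} together with Lemma~\ref{lemma1-1}, since $\la_{\be_0}$ is a non-leading eigenvalue of $\cL_{\be_0}$ with multiplicity $M$, the point $\la_{\be_0}^{-1}$ is a zero of $1-\phi_{\be_0}(w)$ of multiplicity exactly $M$ (here $1<|\la_{\be_0}^{-1}|<\be_0$, so we are inside the disk of radius $\be_0$ where $\phi_{\be_0}$ is analytic). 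The map $w\mapsto w^{-1}$ is biholomorphic on a neighbourhood of $\la_{\be_0}^{-1}$ not containing $0$ (which is harmless since $\la_{\be_0}^{-1}\neq 0$), and a biholomorphic change of variable preserves the order of a zero; hence $z\mapsto 1-\phi_{\be_0}(z^{-1})$ is analytic on a neighbourhood $N_{\be_0}$ of $\la_{\be_0}$ and has a zero of order exactly $M$ at $z=\la_{\be_0}$.

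Next I would invoke the standard fact that an analytic function with a zero of order $M$ at a point $\la_{\be_0}$ factors, on a possibly smaller neighbourhood, as $(z-\la_{\be_0})^M g_{\be_0}(z)$ with $g_{\be_0}$ analytic and $g_{\be_0}(\la_{\be_0})\neq 0$; shrinking $N_{\be_0}$ if necessary we may assume $g_{\be_0}(z)\neq 0$ for all $z\in N_{\be_0}$, since the non-vanishing set is open. This proves part~(1). For part~(2) I would repeat the argument for $\wh{\phi}_{\be_0}$: by Proposition~\ref{a=d}, the function $1-\wh{\phi}_{\be_0}(w)$ has the same zeros as $1-\phi_{\be_0}(w)$ in $\{|w|<\be_0\}$, and in fact the computation in the proof of Proposition~\ref{a=d} shows that in the simple case $1-\wh{\phi}_{\be_0}(w)=(1-(w/\be_0)^{L(1)})^{-1}(1-\phi_{\be_0}(w))$, where the prefactor is analytic and nonzero near $w=\la_{\be_0}^{-1}$ (because $|\la_{\be_0}^{-1}|<\be_0$ forces $(\la_{\be_0}^{-1}/\be_0)^{L(1)}\neq 1$); in the non-simple case the two functions coincide. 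Either way $1-\wh{\phi}_{\be_0}(w)$ has a zero of order exactly $M$ at $w=\la_{\be_0}^{-1}$, and the same change-of-variable-plus-factorization argument yields $\wh{g}_{\be_0}$ on a neighbourhood $\wh{N}_{\be_0}$ with the stated properties.

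There is essentially no serious obstacle here; the content is bookkeeping. The one point that needs a word of care is the assertion that the order of the zero is \emph{exactly} $M$ rather than merely $\geq M$: this is precisely the content of Theorem~\ref{phi} (multiplicity of the eigenvalue equals multiplicity of the zero of $1-\phi_{\be_0}$), combined with Lemma~\ref{lemma1-1} to pass between $1-\phi_{\be_0}$ and $\psi_{\be_0}$ if one wants to phrase it in terms of $\psi_{\be_0}$ (note $\la_{\be_0}^{-1}\neq 1$ since $|\la_{\be_0}^{-1}|>1$, so the factor $1-w$ does not interfere). The other mild point is ensuring $0\notin N_{\be_0}$ so that $z\mapsto z^{-1}$ is well-defined and biholomorphic there, which is automatic after shrinking since $\la_{\be_0}\neq 0$. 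Once these are noted, the factorization lemma for analytic functions finishes both parts.
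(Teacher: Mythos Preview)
Your proposal is correct and follows essentially the same approach as the paper: both start from Theorem~\ref{phi} to get that $\la_{\be_0}^{-1}$ is a zero of $1-\phi_{\be_0}$ of order exactly $M$, then transfer this through the substitution $w=z^{-1}$ and apply the standard factorization at a finite-order zero. The only cosmetic difference is that the paper carries out the algebra $(z^{-1}-\la_{\be_0}^{-1})^M=(z-\la_{\be_0})^M\cdot(-1)^M/(z\la_{\be_0})^M$ explicitly and writes down $g_{\be_0}(z)=(-1)^M f_{\be_0}(z^{-1})/(z\la_{\be_0})^M$, whereas you invoke the abstract fact that a biholomorphism preserves the order of a zero; your treatment of part~(2), using the identity from the proof of Proposition~\ref{a=d} to check that multiplicities agree for $\wh{\phi}_{\be_0}$, is in fact slightly more careful than the paper's, which simply asserts the analogous replacement. (The reference to Lemma~\ref{lemma1-1} is harmless but unnecessary, since Theorem~\ref{phi} already gives the multiplicity directly.)
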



\begin{proof}
(1) Since $\la_{\be_0}^{-1}$ is a zero of $1-\phi_{\be_0}(z)$ with multiplicity $M\geq1$, there is an analytic function $f_{\be_0}(z)$ defined on some neighborhood $\cN_{\be_0}$ of $\la_{\be_0}^{-1}$ such that 
\[1-\phi_{\be_0}(z)=(z-\la_{\be_0}^{-1})^M f_{\be_0}(z)\]
and $|f_{\be_0}(z)|>0$ if $z\in\cN_{\be_0}$. For $z\neq0$ with $z^{-1}\in \cN_{\be_0}$ we have
\begin{align*}
    1-\phi_{\be_0}(z^{-1})&=(z^{-1}-\la_{\be_0}^{-1})^M f_{\be_0}(z^{-1}) \\
    &=(z-\la_{\be_0})^M\frac{(-1)^M}{(z\la_{\be_0})^M} f_{\be_0}(z^{-1}).
\end{align*}
By taking $g_{\be_0}(z)=(-1)^M f_{\be_0}(z^{-1})/(z\la_{\be_0})^M$ and $N_{\be_0}=\{z^{-1}: z\in \cN_{\be_0}\setminus \{0\}\}$, we have the conclusion.

(2) By replacing $\phi_{\be_0}(z), g_{\be_0}(z)$ and $N_{\be_0}$ to $\wh{\phi}_{\be_0}(z), \wh{g}_{\be_0}(z)$ and $\wh{N}_{\be_0}$ respectively in the above calculations, we have the conclusion as in the same way of the proof of $(1)$. 
\end{proof}

\begin{proof}[Proof of Theorem \ref{Main C}]

(1) Let $\la_{\be_0}$ be a non-leading eigenvalue of $\cL_{\be_0}$ with multiplicity $M\geq1$. Since the map $\be\mapsto\{d_n(\be,1)\}_{n=1}^\infty$ is strictly increasing and left-continuous by Proposition 2.3 in \cite{de-Ko-Lo}, for $\be$ sufficiently close to $\be_0$ with $\be<\be_0$ there is an positive integer $N\geq1$ such that 
$\{d_n(\be,1)\}_{n=1}^N
=\{d_n(\be_0,1)\}_{n=1}^N$ and 
$d_{N+1}(\be,1)<d_{N+1}(\be_0,1)$.
By Lemma \ref{4-1} (2) there is an analytic function $\wh{g}_{\be_0}(z)$ defined on some neighborhood $\wh{N}_{\be_0}$ of $\la_{\be_0}$ such that 
\begin{equation}\label{e4-1}
1-\wh{\phi}_{\be_0}(z^{-1})=(z-\la_{\be_0}^{})^M \wh{g}_{\be_0}(z)
\end{equation}
and $|\wh{g}_{\be_0}(z)|>0$ for $z\in \wh{N}_{\be_0}$.

Take $\vep>0$ so small that $|\be_0\la_{\be_0}|>(1+\vep)^2$. 
Assume that $\be<\be_0$ is sufficiently close to $\be_0$ so that
$|\be_0\la_{\be_0}|/|\be\la_\be|<1+\vep$ and $|\wh{g}_{\be_0}(\la_{\be_0})|/|\wh{g}_{\be_0}(\la_{\be})|<1+\vep$. In one hand, since 
\begin{align*}
1-\wh{\phi}_{\be_0}(\la_{\be}^{-1})
&=\wh{\phi}_{\be}(\la_{\be}^{-1})-\wh{\phi}_{\be_0}(\la_{\be}^{-1}) \\
&=\sum_{n=1}^\infty\frac{d_n(\be,1)}{\be_{}^n\la_{\be_{}}^{n}}-\sum_{n=1}^\infty\frac{d_n(\be_{0},1)}{\be_{0}^n\la_{\be_{}}^{n}} \\
&=\sum_{n=1}^N\frac{d_n(\be,1)}{\la_{\be_{}}^{n}}\Biggl(\frac{1}{\be^n}-\frac{1}{\be_0^n}\Biggr)
+\sum_{n=N+1}^\infty\frac{d_n(\be,1)}{\be_{}^n\la_{\be_{}}^{n}}
-\sum_{n=N+1}^\infty\frac{d_n(\be_0,1)}{\be_{0}^n\la_{\be_{}}^{n}},
\end{align*}
together with the equation (\ref{e4-1}), we have
\begin{equation}\label{e4-2}
    |\la_{\be}-\la_{\be_0}|^M\leq
    C_1(\be_0-\be)+C_2\Biggl(\frac{1+\vep}{|\be_0\la_{\be_0}|}\Biggr)^{N+1}
\end{equation}
where $C_1$ and $C_2$ are the constants which are independent of $\be$ given by
\[C_1=\frac{1+\vep}{|\wh{g}_{\be_0}(\la_{\be_0})|}\sum_{n=1}^\infty\ \frac{n[\be_0]}{\be_0}\Biggl(\frac{(1+\vep)^2}{|\be_0\la_{\be_0}|}\Biggr)^n \text{and }
C_2=\frac{1+\vep}{|\wh{g}_{\be_0}(\la_{\be_0})|}\frac{2[\be_0]}{1-(1+\vep)/|\be_0\la_{\be_0}|}.\]

On the other hand, 
\begin{align*}
    \be_0-\be
    &=\sum_{n=1}^\infty \frac{d_n(\be_0,1)}{\be_0^{n-1}}-\sum_{n=1}^\infty \frac{d_n(\be,1)}{\be^{n-1}} \\
    &=\sum_{n=1}^\infty \frac{d_n(\be_0,1)}{\be_0^{n-1}}
    -\sum_{n=1}^\infty \frac{d_n(\be,1)}{\be_0^{n-1}}
    +\sum_{n=1}^\infty \frac{d_n(\be,1)}{\be_0^{n-1}}
    -\sum_{n=1}^\infty \frac{d_n(\be,1)}{\be^{n-1}} \\
    &=\sum_{n=N+1}^\infty \frac{d_n(\be_0,1)}{\be_0^{n-1}}
    -\sum_{n=N+1}^\infty \frac{d_n(\be,1)}{\be_0^{n-1}}
    +\sum_{n=2}^\infty d_{n}(\be,1)\frac{\be^{n-1}-\be_0^{n-1}}{\be_0^{n-1}\be^{n-1}},
\end{align*}
which yields 
\begin{equation}\label{e4-3}
\begin{aligned}
    \be_0-\be
    &\geq\Biggl(\sum_{n=2}^\infty d_n(\be,1)\frac{(n-1)\be_0^{n-2}}{\be^{n-1}\be_0^{n-1}}\Biggr)^{-1}\Biggl(\lim_{x\to1}\tau_{\be_0}^N(x)-\frac{d_{N+1}(\be_0,1)}{\be_0}\Biggr)\frac{1}{\be_0^{N-1}} \\
    &\geq C_3^{-1}\cdot\lim_{x\to1}\tau_{\be_0}^{N+1}(x)\cdot\frac{1}{\be_0^{N+1}},
\end{aligned}
\end{equation}
where 
\[C_3=\sum_{n=1}^\infty n [\be_0]\Biggl(\frac{1+\vep}{\be_0}\Biggr)^n_.\]
Together with the above inequalities, we obtain 
\begin{equation}\label{inequality1}
\begin{aligned}
    \Biggl|\frac{\la_{\be_0}-\la_\be}{(\be_0-\be)^\al}\Biggr|^M
    &\leq C_1 (\be_0-\be)^{1-M\al} \\
    &+C_2C_{3}^{M\al}\Biggl(\frac{1+\vep}{|\be_0\la_{\be_0}|}\Biggr)^{N+1}
    \Biggl(\lim_{x\to1}\tau_{\be_0}^{N+1}(x)\cdot\frac{1}{\be_0^{N+1}}\Biggr)^{-M\al}
 \end{aligned}
\end{equation}
for $0<\al<1/M$. 
Since $1-M\al>0$ we know that $(\be_0-\be)^{1-M\al}\to0$ as $\be\to\be_0$, which guarantees that the first term of the right side of the inequality (\ref{inequality1}) is bounded above by some positive constant independent of $\be$ sufficiently close to $\be_0$. 
Concerning the second term of the right side of the inequality (\ref{inequality1}), by the definition of $\al_1$, we have
\begin{align*}
    &\frac{1}{N+1}\log\Biggl(\Biggl(\frac{1+\vep}{|\be_0\la_{\be_0}|}\Biggr)^{N+1}
    \Biggl(\lim_{x\to1}\tau_{\be_0}^{N+1}(x)\cdot\frac{1}{\be_0^{N+1}}\Biggr)^{-M\al}\Biggr) \\
    &=M\al \log\be_0
    +\frac{M\al}{N+1}\log(\lim_{x\to1}\tau_{\be_0}^{N+1}(x))^{-1}+\log(1+\vep)-\log|\be_0\la_{\be_0}| \\
    &=M(\al-\al_1)\log\be_0+\frac{M(\al-\al_1)}{N+1}\log(\lim_{x\to1}\tau_{\be_0}^{N+1}(x))^{-1} \\
    &+M\al_1\log\be_0+\frac{M\al_1}{N+1}\log(\lim_{x\to1}\tau_{\be_0}^{N+1}(x))^{-1}
    +\log(1+\vep)-\log|\be_0\la_{\be_0}| \\
    &=-M(\al_1-\al)\log\be_0-\frac{M(\al_1-\al)}{N+1}\log(\lim_{x\to1}\tau_{\be_0}^{N+1}(x))^{-1} \\
    &+\frac{(N+1)^{-1}\log(\lim_{x\to1}\tau_{\be_0}^{N+1}(x))^{-1}-\log\gamma_1^{-1}}{\log\be_0+\log\gamma_1^{-1}}\log|\be\la_{\be_0}|+\log(1+\vep) \\
    &<0
\end{align*}
for $0<\al<\al_1$ if $\be$ is sufficiently close to $\be_0$ and $\vep$ is sufficiently small since $\la_\be\to\la_{\be_0}$ and $N \to$ $\infty$ as $\be\to\be_0$.
This shows that the second term of the right side of the inequality (\ref{inequality1}) is bounded above by some constant if $\be$ is sufficiently close to $\be_0$, which concludes that there is a positive constant $C'$ such that 
\begin{equation}\label{(1)c}
\Biggl|\frac{\la_{\be_0}-\la_\be}{(\be_0-\be)^\al}\Biggr|^M
    \leq C'
\end{equation}
for any $\be$ sufficiently close to $\be_0$. This gives the conclusion.

(2) We have the desired result in the same way of (1). Let $\la_{\be_0}$ be a non-leading eigenvalue of $\cL_{\be_0}$ with multiplicity $M\geq1$.
Since the map $\be\mapsto\{a_n(\be,1)\}_{n=1}^\infty$ is strictly increasing and right-continuous by Proposition 2.2 in \cite{de-Ko-Lo}, for $\be\in U_{\be_0}$ sufficiently close to $\be_0$ with $\be>\be_0$ there is an positive integer $N\geq1$ such that 
$\{a_n(\be,1)\}_{n=1}^N
=\{a_n(\be_0,1)\}_{n=1}^N$ and 
$a_{N+1}(\be,1)>a_{N+1}(\be_0,1)$.
By Lemma \ref{4-1} (1) there is an analytic function $g_{\be_0}(z)$ defined on some neighborhood $N_{\be_0}$ of $\la_{\be_0}$ such that 
\[1-\phi_{\be_0}(z^{-1})=(z-\la_{\be_0}^{})^M g_{\be_0}(z)\]
and $|g_{\be_0}(z)|>0$ $z\in N_{\be_0}$.

Take $\vep>0$ so small that $|\be_0\la_{\be_0}|>(1+\vep)^2$. 
Assume that $\be>\be_0$ is sufficiently close to $\be_0$ so that
$\be/\be_0$, $|\be\la_{\be}|/|\be_{0}\la_{\be_{0}}|<1+\vep$ and $|g_{\be_0}(\la_{\be})|/|g_{\be_0}(\la_{\be_0})|<1+\vep$. By replacing $\wh{\phi}_{\be_0}$ to $\phi_{\be_0}$ in the calculations for obtaining the inequality (\ref{e4-2}), we have
\begin{equation}\label{e4-3-1}
    |\la_{\be}-\la_{\be_0}|^M\leq
    D_1(\be-\be_0)+D_2\Biggl(\frac{1+\vep}{|\be_0\la_{\be_0}|}\Biggr)^{N+1}
\end{equation}
where $D_1$ and $D_2$ are the constants which are independent of $\be$ given by 
\[D_1=\frac{1+\vep}{|g_{\be_0}(\la_{\be_0})|}\sum_{n=1}^\infty \frac{n[\be_0]}{\be_0}\Biggl(\frac{(1+\vep)^2}{|\be_0\la_{\be_0}|}\Biggr)^n \text{ and }
D_2=\frac{1+\vep}{|g_{\be_0}(\la_{\be_0})|}\frac{2[\be_0]}{1-(1+\vep)/|\be_0\la_{\be_0}|}.\]
Similarly, by replacing the sequences $\{d_n(\be,1)\}_{n=1}^\infty$ and $\{d_n(\be_0,1)\}_{n=1}^\infty$ to the sequences $\{a_n(\be,1)\}_{n=1}^\infty$ and $\{a_n(\be_0,1)\}_{n=1}^\infty$ respectively in the calculations for obtaining the inequality (\ref{e4-3}), we have
\begin{equation}\label{e4-4}
\begin{split}
    \be-\be_0
    &\geq\Biggl(\sum_{n=2}^\infty a_n(\be_0,1)\frac{(n-1)\be^{n-2}}{\be^{n-1}\be_0^{n-1}}\Biggr)^{-1}\Biggl(\sum_{n=N+1}^\infty\frac{a_n(\be,1)}{\be^{n-1}}-\sum_{n=N+1}^\infty\frac{a_n(\be_0,1)}{\be^{n-1}}\Biggr) \\
    &\geq\Biggl(\sum_{n=2}^\infty [\be_0]\frac{(n-1)}{\be_0^{n}}\Biggr)^{-1}\bigl(\tau_\be^N(1)-\tau_{\be_0}^N(1)\bigr)\frac{1}{\be^{N}}.
    \end{split}
\end{equation}
Since 
\begin{align*}
    &\tau_\be^N(1)-\tau_{\be_0}^N(1) \\
    &\geq \frac{a_{N+1}(\be_0,1)+1}{\be}-\tau_{\be_0}^N(1) \\
    &\geq\frac{a_{N+1}(\be_0,1)+1}{\be}-\frac{a_{N+1}(\be_0,1)+1}{\be_0}+\frac{a_{N+1}(\be_0,1)+1}{\be_0}-\tau_{\be_0}^N(1) \\
    &=-\frac{a_{N+1}(\be_0,1)+1}{\be\be_0}(\be-\be_0)
    +\bigl(1-\tau_{\be_0}^{N+1}(1)\bigr)\cdot\frac{1}{\be_0},
\end{align*}
together with the inequality (\ref{e4-4}), we obtain
\[\be-\be_0\geq D_3^{-1} (1-\tau_{\be_0}^{N+1}(1))\cdot\frac{1}{\be_0^{N+1}(1+\vep)^{N+1}},\]
where $D_3$ is a positive constant independent of any $\be$ sufficiently close to $\be_0$. This and the inequality (\ref{e4-3-1}) give the inequality:
\begin{equation}\label{inequality2}
\begin{aligned}
\Biggl|\frac{\la_{\be_0}-\la_\be}{(\be-\be_0)^\al}\Biggr|^M
    &\leq D_1(\be-\be_0)^{1-M\al}\\ 
    &+D_2D_{3}^{M\al}\Biggl(\frac{1+\vep}{|\be_0\la_{\be_0}|}\Biggr)^{N+1}
    \Biggl((1-\tau_{\be_0}^{N+1}(1))\cdot\frac{1}{(\be_0(1+\vep))^{N+1}}\Biggr)_.^{-M\al}
\end{aligned}
\end{equation}
Since $1-M\al>0$ we know that $(\be-\be_0)^{1-M\al}\to0$ as $\be\to\be_0$, which ensures that the first term of the right side of the above inequality (\ref{inequality2}) is bounded above by some positive constant independent of $\be$ sufficiently close to $\be_0$.  
As in the same way of the estimation of the second term of (\ref{inequality1}), we have that 
\begin{align*}
    &\frac{1}{N+1}\log\Biggl(\Biggl(\frac{1+\vep}{|\be_0\la_{\be_0}|}\Biggr)^{N+1}
    \Biggl((1-\tau_{\be_0}^{N+1}(1))\cdot\frac{1}{(\be_0(1+\vep))^{N+1}}\Biggr)^{-M\al}\Biggr) \\
    &=M\al \log\be_0
    +\frac{M\al}{N+1}\log(1-\tau_{\be_0}^{N+1}(1))^{-1}+(M\al+1)\log(1+\vep)-\log|\be_0\la_{\be_0}| \\
    &=M(\al-\al_2)\log\be_0+\frac{M(\al-\al_2)}{N+1}\log(1-\tau_{\be_0}^{N+1}(1))^{-1} \\
    &+M\al_1\log\be_0+\frac{M\al_1}{N+1}\log(\lim_{x\to1}\tau_{\be_0}^{N+1}(x))^{-1}
    +(M\al+1)\log(1+\vep)-\log|\be_0\la_{\be_0}| \\
    &=-M(\al_2-\al)\log\be_0-\frac{M(\al_2-\al)}{N+1}\log(1-\tau_{\be_0}^{N+1}(1))^{-1} \\
    &+\frac{(N+1)^{-1}\log(1-\tau_{\be_0}^{N+1}(1))^{-1}-\log\gamma_2^{-1}}{\log\be_0+\log\gamma_2^{-1}}\log|\be_0\la_{\be_0}|+(M\al+1)\log(1+\vep) \\
    &<0
\end{align*}
if $\be$ is sufficiently close to $\be_0$ and $\vep$ is sufficiently small since $\la_\be\to\la_{\be_0}$ and $N \to$ $\infty$ as $\be\to\be_0$. This shows that there is some positive constant $D'$ satisfying 
\begin{equation}\label{(2)c}
\Biggl|\frac{\la_{\be_0}-\la_\be}{(\be_0-\be)^\al}\Biggr|^M
    \leq D'
\end{equation}
for any $\be$ sufficiently close to $\be_0$, which gives the conclusion. 

(3) Let $\{l(N)\}_{N=1}^\infty$ be a sequence of positive integers such that $l(N)<l(N+1)$ and $d_{l(N)}(\be_0,1)>0$ for any $N\geq1$ satisfying  $d_i(\be_0,1)=0$ if $i\notin\{l(N)\}_{N=1}^\infty$. Note that we can take such a sequence since for every $n\geq1$ there is $n_0$ with $n_0\geq n$ such that $d_{n_0}(\be_0,1)>0$ by the definition of the quasi greedy expansion of $1$. 

For each $N\geq1$ take the positive number $\be_N>1$ satisfying
\[1=\sum_{n=1}^{l(N)}\frac{d_n(\be_0,1)}{\be_N^n}.\]
By Proposition \ref{Parry2}, we know that each $\be_N$ is a simple Parry number satisfying $\{a_n(\be_N,1)\}_{n=1}^{l(N)}=\{d_n(\be_0,1)\}_{n=1}^{l(N)}$ and $a_n(\be_N,1)=0$ for $n\geq l(N)+1$.
Note that $\be_N<\be_{N+1}$ for $N\geq1$ and $\displaystyle{\lim_{N\to\infty}}\be_N=\be_0$. Since
\[\be_0-\be_N=\sum_{n=l(N)+1}^\infty\frac{d_{n}(\be_0,1)}{\be_0^{n-1}}-\sum_{n=2}^{l(N)} d_n(\be_0,1)\frac{\be_0^{n-1}-\be_N^{n-1}}{\be_{0}^{n-1}\be_N^{n-1}},\]
we have
\begin{equation}\label{bunbo}
    \begin{split}
        \be_0-\be_N\leq \sum_{n=l(N)+1}^\infty\frac{d_{n}(\be_0,1)}{\be_0^{n-1}}\leq E_1 \frac{1}{\be_0^{l(N+1)}},
    \end{split}
\end{equation}
where
\[E_1=\frac{[\be_0]}{1-1/\be_0}.\]
By Lemma \ref{4-1} (2), we know that there is an analytic function $\wh{g}_{\be_0}(z)$ defined on some neighborhood of $\la_{\be_0}$  such that
\[(\la_{\be_0}-\la_{\be_N}^{})^M= \frac{1}{\wh{g}_{\be_0}(\la_{\be_N})}(1-\wh{\phi}_{\be_0}(\la_{\be_N}^{-1}))\]
and $|\wh{g}_{\be_0}(\la_{\be_0})|>0$ for any $\be$ sufficiently close to $\be_0$. 
Since
\begin{align*}
1-\wh{\phi}_{\be_0}(\la_{\be_N}^{-1})
&=\phi_{\be_N}(\la_{\be_N}^{-1})-\wh{\phi}_{\be_0}(\la_{\be_N}^{-1}) \\
&=\sum_{n=1}^{l(N)}\frac{d_n(\be_0,1)}{\be_{N}^n\la_{\be_{N}}^{n}}-\sum_{n=1}^\infty\frac{d_n(\be_{0},1)}{\be_{0}^n\la_{\be_{N}}^{n}} \\
&=\sum_{n=1}^{l(N)}\frac{d_n(\be_0,1)}{\la_{\be_{N}}^{n}}\Biggl(\frac{1}{\be_N^n}-\frac{1}{\be_0^n}\Biggr)
-\sum_{n=l(N)+1}^\infty\frac{d_n(\be_0,1)}{\be_{0}^n\la_{\be_{N}}^{n}}
\end{align*}
for $N\geq1$, we obtain
\[|\la_{\be_N}-\la_{\be_0}|^M\geq \frac{1}{|\wh{g}_{\be_0}(\la_{\be_N})|}\Biggl(\Biggl|\sum_{n=l(N+1)}^\infty\frac{d_n(\be_0,1)}{\be_0^n\la_{\be_N}^n}\Biggr|-\sum_{n=1}^\infty\frac{n[\be_0]}{\be_0|\be_N\la_{\be_N}|^n}(\be_0-\be_N)\Biggr)\]
for $N\geq1$. Hence
\begin{equation}\label{e4-7}
\begin{split}
\Biggl|\frac{\la_{\be_0}-\la_{\be_N}}{(\be_N-\be_0)^\al}\Biggr|^M
&\geq\frac{1}{|\wh{g}_{\be_0}(\la_{\be_N})|}\Biggl(
\frac{1}{|\be_0-\be_N|^{M\al}}\Biggl|\sum_{n=l(N+1)}^\infty\frac{d_n(\be_0,1)}{\be_0^n\la_{\be_N}^n}\Biggr|\\
&-\sum_{n=1}^\infty\frac{n}{|\be_N\la_{\be_N}|^n}(\be_0-\be_N)^{1-M\al}\Biggr)_.
\end{split}
\end{equation}
By the fact that $1-M\al>0$, the second term of the right side of the above inequality is bounded below by some constant independent 
of $\be_N$ for sufficiently large $N\geq1$. 

In the following, we show that there is an increasing sequence
$\{N_i\}_{i=1}^\infty$ of positive integers such that 
\[\frac{1}{|\be_0-\be_{N_i}|^{M\al}}\Biggl|\sum_{n=l(N_i+1)}^\infty\frac{d_n(\be_0,1)}{\be_0^n\la_{\be_{N_i}}^n}\Biggr|\to\infty\]
as $i\to\infty$ if $\al>\al_0$. 
From the inequality (\ref{e4-7}), this yields that 
\[\Biggl|\frac{\la_{\be_0}-\la_{\be_{N_i}}}{(\be_{N_i}-\be_0)^\al}\Biggr|^M
\to\infty\]
as $i\to\infty$, since $|\wh{g}_{\be_0}(\la_{\be_{N_i}})|\to |\wh{g}_{\be_0}(\la_{\be_0})|>0$ as $i\to\infty$, which gives the conclusion.

\begin{sublemma}\label{sublem}
    There are a positive constant $C$ and a strictly increasing sequence $\{l^{'}(M)\}_{M=1}^\infty$ of positive integers with $d_{l'(M)}(\be_0,1)>0$ for $M\geq1$ such that 
    \[\Biggl|\sum_{n=1}^\infty\frac{d_{n-1+l'(M)}(\be_0,1)}{\be_0^n\la_{\be_0}^n}\Biggr|\geq C\]
    for any $M\geq1$. 
\end{sublemma}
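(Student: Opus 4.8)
The plan is to exploit the fact that the sequence $\{d_n(\be_0,1)\}_{n=1}^\infty$ is itself the quasi-greedy expansion of $1$ in base $\be_0$, so along the subsequence $\{l(N)\}$ of indices where $d_{l(N)}(\be_0,1)>0$, the ``tail'' sequences $\{d_{n-1+l(N)}(\be_0,1)\}_{n=1}^\infty$ cannot all be too small simultaneously when divided by $\be_0^n\la_{\be_0}^n$. Concretely, suppose for contradiction that no such constant $C$ exists: then for every positive constant $c$ and every $M$ one could find indices $l$ (among the $l(N)$'s) arbitrarily far out with $\bigl|\sum_{n=1}^\infty d_{n-1+l}(\be_0,1)/(\be_0^n\la_{\be_0}^n)\bigr|<c$. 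I would like to show this forces $1-\phi_{\be_0}(\la_{\be_0}^{-1})$ to have a factor that vanishes identically near $\la_{\be_0}^{-1}$, contradicting that $\la_{\be_0}^{-1}$ is a zero of finite multiplicity $M$ (equivalently, contradicting $g_{\be_0}\ne 0$ from Lemma \ref{4-1}).

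The key algebraic identity to set up is a shift relation. Writing $S_l := \sum_{n=1}^\infty d_{n-1+l}(\be_0,1)(\be_0\la_{\be_0})^{-n}$ for the tail sums (with $S_1$ essentially $\phi_{\be_0}(\la_{\be_0}^{-1})$ up to the obvious index shift), one has the telescoping relation
\[
S_l = \sum_{k=0}^{l-1} d_{k}(\be_0,1)(\be_0\la_{\be_0})^{?}\ \cdots
\]
— more precisely, $\phi_{\be_0}(\la_{\be_0}^{-1})$ decomposes as a finite initial block plus $(\be_0\la_{\be_0})^{-l}$ times a shifted generating function, so that $S_{l}$ and $S_{l'}$ for consecutive admissible indices are related by multiplication by powers of $\be_0\la_{\be_0}$ plus a bounded correction coming from the finitely many coefficients in between. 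Since $|\be_0\la_{\be_0}|>1$ (as $|\la_{\be_0}|>1/\be_0$ — indeed $>1/\be_0$, and in fact $|\la_{\be_0}^{-1}|<\be_0$), a uniform smallness of infinitely many $S_{l(N)}$ would propagate backward to force $\phi_{\be_0}(\la_{\be_0}^{-1})=1$ together with vanishing of all the ``derivative'' data, i.e. $1-\phi_{\be_0}(z)$ would vanish to infinite order at $z=\la_{\be_0}^{-1}$. That is impossible since $1-\phi_{\be_0}$ is analytic and not identically zero on $|z|<\be_0$ (it is $1$ at $z=0$). The choice of $\{l'(M)\}$ as a subsequence of $\{l(N)\}$ is then forced: take it to be precisely the indices realizing the lower bound $C$.

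An alternative, possibly cleaner, route avoids contradiction: use Abel summation / partial-fraction style manipulation to write $S_{l(N)}$ directly in terms of $1-\phi_{\be_0}(\la_{\be_0}^{-1})=0$ and the finite partial sums $\sum_{n=1}^{l(N)} d_n(\be_0,1)(\be_0\la_{\be_0})^{-n}$; since those partial sums converge to $1$ (the quasi-greedy expansion evaluated at $\la_{\be_0}^{-1}\cdot\be_0$... careful: it converges to $1$ only at the true base point, so here one gets convergence to $\phi_{\be_0}(\la_{\be_0}^{-1})$, whatever its value), the reindexed tails $S_{l(N)}$ converge to a definite nonzero limit along a subsequence unless a degenerate cancellation occurs, and that degeneracy is again ruled out by finite multiplicity of the zero. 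The main obstacle I anticipate is making the ``backward propagation of smallness'' fully rigorous: one must control the bounded correction terms (the coefficients $d_k(\be_0,1)$ with $k$ between consecutive admissible indices, all lying in $\{0,\dots,[\be_0]\}$) uniformly, and ensure that the gaps $l(N+1)-l(N)$ do not grow so fast as to let the correction dominate — but since $|\be_0\la_{\be_0}|>1$ is a fixed number $>1$ and the corrections are a geometric-type series in $(\be_0\la_{\be_0})^{-1}$ with bounded coefficients, a single elementary estimate of the form $|S_{l(N)} - (\be_0\la_{\be_0})^{l(N')-l(N)}S_{l(N')}| \le [\be_0]/(|\be_0\la_{\be_0}|-1)$ should close the gap, and then the constant $C$ can be taken to be, say, half the distance from $0$ to this limiting value.
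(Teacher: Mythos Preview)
Your approach has a genuine conceptual gap. The tail sums $S_l=\sum_{n\ge1}d_{n-1+l}(\be_0,1)(\be_0\la_{\be_0})^{-n}$ are simply the values, at the single point $z=\la_{\be_0}^{-1}$, of the shifted generating functions; they carry no information about the derivatives of $1-\wh\phi_{\be_0}(z)$ at that point. So even if all $S_{l(N)}$ tended to $0$, this would not force infinite-order vanishing of $1-\wh\phi_{\be_0}$ at $\la_{\be_0}^{-1}$, and the finite-multiplicity hypothesis from Lemma \ref{4-1} is a red herring here. Your final estimate (with the exponent corrected to $(\be_0\la_{\be_0})^{l(N)-l(N')}$ for $l(N')>l(N)$) is valid but only gives the trivial uniform bound $|S_{l(N)}|\le[\be_0]/(|\be_0\la_{\be_0}|-1)+o(1)$, which produces no lower bound. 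Also, the correct negation of the sublemma is that $|S_l|\to 0$ along \emph{all} indices $l$ with $d_l(\be_0,1)>0$, not merely along a subsequence as you wrote.

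The paper's argument uses the one-step recursion you already identified, $\tau_{j+1}=\be_0\la_{\be_0}\tau_j-d_j(\be_0,1)$ (writing $\tau_j:=S_j$), but draws a different and simpler contradiction: if $\tau_j\to0$ over \emph{all} $j$ (this is the negation of ``some subsequence has $|\tau_{j_k}|>C$''), then $d_j(\be_0,1)=\be_0\la_{\be_0}\tau_j-\tau_{j+1}\to0$, so $d_j(\be_0,1)=0$ eventually, contradicting the definition of the quasi-greedy expansion of $1$. This yields $C>0$ and a subsequence $\{j_k\}$ with $|\tau_{j_k}|>C$, but not yet $d_{j_k}(\be_0,1)>0$. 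The second step, which your proposal does not address, is to observe that if eventually $d_{j_k}(\be_0,1)=0$, then forward-iterating from $j_k$ (where the recursion reduces to $\tau_{j+1}=\be_0\la_{\be_0}\tau_j$ while $d_j=0$) until the first index $l'>j_k$ with $d_{l'}(\be_0,1)>0$ gives $|\tau_{l'}|=|\be_0\la_{\be_0}|^{l'-j_k}|\tau_{j_k}|\ge C$, since $|\be_0\la_{\be_0}|>1$.
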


\begin{proof}
Set 
\[\tau_j=\sum_{n=1}^\infty\frac{d_{n-1+j}(\be_0,1)}{\be_0^n\la_{\be_0}^n}\]
for $j\geq1$. First we show that there are positive constant $C$ and a strictly increasing sequence $\{j_k\}_{k=1}^\infty$ of positive integers such that $|\tau_{j_k}|>C$ for $k\geq1$.
Assume that $|\tau_j|\to0$ as $j\to\infty$. By definition we know that $\be_0\la_{\be_0}\tau_j-d_{j}(\be_0,1)=\tau_{j+1}$ for $j\geq1$. Then $d_{j}(\be_0,1)\leq |\be_0\la_{\be_0}||\tau_j|+|\tau_{j+1}|\to0$ as $j\to\infty$, which ensures that there is a positive integer $j_0$ such that $d_{j}(\be_0,1)=0$ for any $j\geq j_0$. By the definition of the quasi-greedy expansion of $1$, however, we know that for any $n\geq1$ there is $n_0\geq n$ such that $d_{n_0}(\be_0,1)>0$, which shows the contradiction. 

If there is a strictly increasing sequence of positive integers $\{k_M\}_{M=1}^\infty$ satisfying $d_{j_{k_M}}(\be_0,1)>0$ for any $M\geq1$, we have the conclusion by taking $l^{'}(M)=j_{k_M}$ for $M\geq1$. 

If there is $k_0$ such that $d_{j_k}(\be_0,1)=0$ for any $k\geq k_0$, we can take the desired sequence as follows. 
Let $l^{'}(1)=\min\{n>j_{k_0}; d_n(\be_0,1)>0\}$. Since $d_n(\be_0,1)=0$ for $j_{k_0}\leq n<l^{'}(1)$, we have 
\[\tau_{l^{'}(1)}=|\be_0\la_{\be_0}|^{l^{'}(1)-j_{k_0}}\tau_{j_{k_0}}\geq|\be_0\la_{\be_0}|^{l^{'}(1)-j_{k_0}}C\geq C\]
since $|\be_0\la_{\be_0}|>1$. Set $k_1$ so that $j_{k_1}=\min\{n>l^{'}(1); n\in\{j_k\}_{k=1}^\infty\}$. We know that $d_{j_{m}}(\be_0,1)=0$ for $m\geq k_1$. Let $l^{'}(2)=\min\{n>j_{k_1}\ ;\ d_n(\be_0,1)>0\}$.
Similarly to the calculation for $\tau_{l^{'}(1)}$, we have
\[\tau_{l^{'}(2)}=|\be_0\la_{\be_0}|^{l^{'}(2)-j_{k_1}}\tau_{j_{k_1}}\geq|\be_0\la_{\be_0}|^{l^{'}(2)-j_{k_1}}C>C.\]
Inductively, by taking 
\[l^{'}(M)=\min\{n>j_{k_{M-1}}\ ;\ d_n(\be_0,1)>0\}\]
and $\{k_M\}_{M=1}^\infty$ so that 
\[j_{k_M}=\min\{n>l^{'}(M)\ ;\ n\in\{j_k\}_{k=1}^\infty\}\]
for $M\geq1$, we have that $d_{l^{'}(M)}(\be_0,1)>0$ and $\tau_{l^{'}(M)}>C$ for $M\geq1$, which finishes the proof.
\end{proof}

\noindent
{\it Proof of Theorem \ref{Main C}, continued}

\noindent

Let $\{l^{'}(M)\}_{M=1}^\infty$ be the sequence as in the above sublemma. For $M\geq1$ we define the formal power series $P_M(z)$ by 
\[P_M(z)=\sum_{n=1}^\infty \frac{d_{n-1+l^{'}(M)}(\be_0,1)}{\be_0^n\la_{\be_0}^n}z^n.\]
We note that the convergence radius of $P_M(z)$ is at least $|\be_0\la_{\be_0}|>1$. Let $\vep$ be any positive number with $1+\vep<|\be_0\la_{\be_0}|$. Since 
\[|P_M(z)|\leq \sum_{n=1}^\infty[\be_0]\Biggl(\frac{|z|}{|\be_0\la_{\be_0}|}\Biggr)^n
\leq \frac{[\be_0]}{1-(1+\vep)/|\be_0\la_{\be_0}|}
\]
for any $M\geq1$ and $z\in\C$ with $|z|\leq 1+\vep$, we have that $\{P_M(z)\}_{M=1}^\infty$ is a normal family on $\{z\in\C; |z|<1+\vep\}$. 
Together with the fact that there is a positive constant $C$ such that $|P_M(1)|>C$ for any $M\geq1$ by Sublemma \ref{sublem}, we have that there are a positive number $\delta'>0$ and a strictly increasing sequence $\{M_i\}_{i=1}^\infty$ of positive integers such that 
$|P_{M_i}(z)|\geq C/2$ for any $z\in\C$ with $|z-1|<\delta'$.

Take an increasing sequence $\{N_i\}_{i=1}^\infty$ of positive integers so that $l(N_i)=l'(M_i)$ for $i\geq1$. Let $\delta$ be a positive number so small that $\delta<\min\{\delta',\vep\}$ and $\be_0^{M\al_0}(1-\delta)/|\be_0\la_{\be_0}|>1$. For $i\geq1$ so large as $|1-(\la_{\be_0}/\la_{\be_{N_i}})|<\delta$, we have 
\begin{align*}
    \Biggl|\sum_{n=l(N_i+1)}^\infty\frac{d_n(\be_0,1)}{\be_0^n\la_{\be_{N_i}}^n}\Biggr|
    &=\Biggl|\sum_{n=1}^\infty \frac{d_{n-1+l(N_i+1)}(\be_0,1)}{\be_0^n\la_{\be_0}^n}\Bigl(\frac{\la_{\be_0}}{\la_{\be_{N_i}}}\Bigr)^n\Biggr|\Biggl|\frac{1}{\be_0\la_{\be_{N_i}}}\Biggr|^{l(N_i+1)-1} \\
    &=\Biggl|P_{N_i+1}\Bigl(\frac{\la_{\be_0}}{\la_{\be_{N_i}}}\Bigr)\Biggr| 
    \Biggl|\frac{1}{\be_0\la_{\be_{0}}}
    \frac{\la_{\be_0}}{\la_{\be_{N_i}}}\Biggr|^{l(N_i+1)-1} \\
    &\geq \frac{C}{2}\Biggl(\frac{1-\delta}{|\be_0\la_{\be_0}|}\Biggr)^{l(N_i+1)-1}_.
\end{align*}
Together with the inequality ($\ref{bunbo}$), we obtain
\begin{equation}\label{e4-11}
    \frac{1}{|\be_0-\be_{N_i}|^{M\al}}
    \Biggl|\sum_{n=l(N_i+1)}^\infty\frac{d_n(\be_0,1)}{\be_0^n\la_{\be_{N_i}}^n}\Biggr|
    \geq \frac{C}{2}\frac{1}{E_1^{M\al}}\Biggl(\frac{(1-\delta)\be_0^{M\al}}{|\be_0\la_{\be_0}|}\Biggr)^{l(N_i+1)-1}
\end{equation}
for $i\geq1$ and $\al>0$. Since 
\[\frac{(1-\delta)\be_0^{M\al}}{|\be_0\la_{\be_0}|}=
\frac{(1-\delta)\be_0^{M\al_0}}{|\be_0\la_{\be_0}|}\be_0^{M(\al-\al_0)}>1\]
for $\al>\al_0$, we have that the right side of the inequality (\ref{e4-11}) goes to infinity as $i\to\infty$ if $\al>\al_0$, which ends the proof. 
\end{proof}

\begin{remark}
In the proof of Theorem \ref{Main C} (3), we in fact show that the function $\be\in U_{\be_0}\mapsto\la_{\be}\in V_{\be_0}$ is not $\al$-H\"older continuous from left at $\be_0$ if $\al>\al_0$. Establishing the same result for right continuity seems to be a more delicate problem and 
it may depend on the coefficient sequence of the quasi-greedy expansion of $1$. 
\end{remark}

\end{section}

\begin{section}{Properties of eigenfunctionals}

In this section, we relate the value of every eigenfunctional of $\cL_\be$ applied to the indicator function of an interval to the greedy expansion (Proposition \ref{key2}) and give its three applications. 
The following lemma is a key tool for obtaining the main results in this section.

\begin{lemma}\label{key1}
Let $n\geq 0$ be a non-negative integer. Then we have
\begin{equation*}
\cL_\be \1_{[0,\tau_\be^n(x)]}=\frac{a_{n+1}(\be,x)}{\be}\1_{[0,1]}+\frac{1}{\be}\1_{[0,\tau_\be^{n+1}(x)]},
\end{equation*}
where $\1_A$ denote the indicator function of $A$. 
\end{lemma}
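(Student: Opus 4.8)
The plan is to evaluate $\cL_\be\1_{[0,\tau_\be^n(x)]}(y)$ pointwise, directly from the definition of the Perron--Frobenius operator, and then to split the answer into two cases that are matched with the claimed right-hand side by means of the relation $\tau_\be^n(x)=\tfrac{[\be\tau_\be^n(x)]}{\be}+\tfrac{\tau_\be^{n+1}(x)}{\be}$ recorded in subsection~2.1 (equivalently $\be\tau_\be^n(x)=a_{n+1}(\be,x)+\tau_\be^{n+1}(x)$, since $a_{n+1}(\be,x)=[\be\tau_\be^n(x)]$).

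First I would describe the preimages of a point $y\in[0,1)$ under $\tau_\be$. If $z\in[0,1]$ and $\tau_\be(z)=y$, then $\be z=y+[\be z]$, so $z=(y+k)/\be$ with $k=[\be z]\in\N\cup\{0\}$; conversely, for every integer $k\geq0$ with $(y+k)/\be\in[0,1]$ one has $[\be\cdot(y+k)/\be]=[y+k]=k$ (using $0\le y<1$), hence $\tau_\be\bigl((y+k)/\be\bigr)=y$. Thus the preimages of $y$ lying in $[0,t]$ are exactly the points $(y+k)/\be$ with $k\geq0$ and $(y+k)/\be\le t$; when $t\le1$ the admissibility constraint $(y+k)/\be\le1$ is automatically implied, which is our situation since $t=\tau_\be^n(x)\in[0,1]$. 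Consequently
\[
\cL_\be\1_{[0,\tau_\be^n(x)]}(y)=\frac1\be\,\#\bigl\{\,k\in\N\cup\{0\}:\ k\le\be\tau_\be^n(x)-y\,\bigr\}.
\]

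Next I would substitute $\be\tau_\be^n(x)-y=a_{n+1}(\be,x)+\bigl(\tau_\be^{n+1}(x)-y\bigr)$. Since $a_{n+1}(\be,x)\in\{0,1,\dots,[\be]\}$ and $\tau_\be^{n+1}(x)\in[0,1)$, for $y\in[0,1)$ the number $\tau_\be^{n+1}(x)-y$ lies in $(-1,1)$, which produces two cases. If $0\le y\le\tau_\be^{n+1}(x)$ then $\be\tau_\be^n(x)-y\in[a_{n+1}(\be,x),a_{n+1}(\be,x)+1)$, so the cardinality above equals $a_{n+1}(\be,x)+1$; if $\tau_\be^{n+1}(x)<y<1$ then $\be\tau_\be^n(x)-y\in(a_{n+1}(\be,x)-1,a_{n+1}(\be,x))$, so the cardinality equals $a_{n+1}(\be,x)$. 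On the other hand, $\frac{a_{n+1}(\be,x)}{\be}\1_{[0,1]}+\frac1\be\1_{[0,\tau_\be^{n+1}(x)]}$ equals $\frac{a_{n+1}(\be,x)+1}{\be}$ on $[0,\tau_\be^{n+1}(x)]$ and $\frac{a_{n+1}(\be,x)}{\be}$ on $(\tau_\be^{n+1}(x),1]$. These agree for every $y\in[0,1)$, which proves the lemma as an identity in $\widehat{BV}$ (the two sides may disagree only at the single point $y=1$, where $\cL_\be\1_{[0,\tau_\be^n(x)]}$ necessarily vanishes because $1$ has no $\tau_\be$-preimage).

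I expect the only delicate points to be bookkeeping rather than substance. One must check that $\tau_\be^n(x)\le1$ is exactly the hypothesis needed to absorb the admissibility constraint into $k\le\be\tau_\be^n(x)-y$; that $\tau_\be^{n+1}(x)<1$ always (the image of $\tau_\be$ is contained in $[0,1)$, including $\tau_\be(1)=\be-[\be]$), so that the two cases genuinely exhaust $[0,1)$ and line up with the right-hand side without overlap at $\tau_\be^{n+1}(x)$; and that $\be\tau_\be^n(x)=[\be\tau_\be^n(x)]+\tau_\be^{n+1}(x)$ holds for every $n\geq0$ and every $x\in[0,1]$, including simple $x$ and the degenerate case $\tau_\be^n(x)=1$. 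None of these steps requires $\be$ to be a non-integer.
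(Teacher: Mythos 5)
Your proof is correct and takes essentially the same approach as the paper's: both evaluate $\cL_\be\1_{[0,\tau_\be^n(x)]}(y)$ pointwise by counting preimages of $y$ under $\tau_\be$ that lie in $[0,\tau_\be^n(x)]$, and split into the cases $y\le\tau_\be^{n+1}(x)$ versus $y>\tau_\be^{n+1}(x)$ using the relation $\be\tau_\be^n(x)=a_{n+1}(\be,x)+\tau_\be^{n+1}(x)$. Your version is slightly more explicit in parametrising the preimages as $(y+k)/\be$ and in checking the boundary values, but there is no substantive difference.
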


\begin{proof}
By the definition of $\cL_{\be}$, for $a\in[0,1]\setminus\{\tau^{n+1}_\be(x)\}$ we have
\[\begin{split}
\be\cL_{\be}\1_{[0,\tau_{\be}^{n}(x)]}(a)
&=\#\{b\in[0,\tau_{\be}^n(x)]\ ;\ a=\tau_{\be}(b)\} \\
&=
\begin{cases}
a_{n+1}(\be,x)+1 &(a<\tau_\be^{n+1}(x)), \\
a_{n+1}(\be,x) & (a>\tau_\be^{n+1}(x)), \\
\end{cases} 
\end{split}\]
since the map $\tau_{\be}$ is bijective on $[i/\be,(i+1)/\be)$ for $0\leq i \leq [\beta]-1$.
This shows that 
\[\cL_\be \1_{[0,\tau_\be^n(x)]}=\frac{a_{n+1}(\be,x)}{\be}\1_{[0,1]}+\frac{1}{\be}\1_{[0,\tau_\be^{n+1}(x)]},\]
which finishes the proof.
\end{proof}

\begin{proposition}\label{key2}
Let $\la\in\C$ be an isolated eigenvalue of the Perron-Frobenius operator $\cL_{\be}$ with $|\la|>1/\be$ and let $\nu\in\BV^*$ be an eigenfunctional corresponding to $\la$. Then for $x\in[0,1]$ we have
\[\nu(\1_{[0,x]})=\nu(\1_{[0,1]})\sum_{n=1}^{\infty}\frac{a_n(\be, x)}{\be^n\la^n}.\]
In addition, $\nu(\mathbf{1}_{[0,1]})\neq0$.
\end{proposition}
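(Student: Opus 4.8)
The plan is to iterate Lemma~\ref{key1} to compute $\nu(\1_{[0,\tau_\be^n(x)]})$ for all $n$ simultaneously, using the eigenfunctional relation $\nu(\cL_\be f)=\la\nu(f)$. Applying $\nu$ to the identity in Lemma~\ref{key1} gives
\[
\la\,\nu(\1_{[0,\tau_\be^n(x)]})=\frac{a_{n+1}(\be,x)}{\be}\,\nu(\1_{[0,1]})+\frac{1}{\be}\,\nu(\1_{[0,\tau_\be^{n+1}(x)]})
\]
for every $n\ge 0$. Setting $c_n=\nu(\1_{[0,\tau_\be^n(x)]})$ and $c_0=\nu(\1_{[0,x]})$, this is the linear recursion $c_{n+1}=\be\la\, c_n-a_{n+1}(\be,x)\,\nu(\1_{[0,1]})$. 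Solving it forward from $c_0$ yields, for each $N\ge 1$,
\[
c_N=(\be\la)^N c_0-\nu(\1_{[0,1]})\sum_{n=1}^{N}(\be\la)^{N-n}a_n(\be,x),
\]
hence
\[
c_0=(\be\la)^{-N}c_N+\nu(\1_{[0,1]})\sum_{n=1}^{N}\frac{a_n(\be,x)}{(\be\la)^{n}}.
\]

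The main obstacle is to show the remainder term $(\be\la)^{-N}c_N$ vanishes as $N\to\infty$. Here I would use that $|\be\la|>1$ (since $|\la|>1/\be$), together with a uniform bound on $c_N=\nu(\1_{[0,\tau_\be^N(x)]})$ independent of $N$: because $\nu\in\BV^*$ is a bounded functional and each $\1_{[0,\tau_\be^N(x)]}$ has $\BV$-norm at most $2$ (its image in $\BV$ has variation $\le 1$ and sup norm $\le 1$, and passing to the quotient only decreases the norm), one gets $|c_N|\le 2\|\nu\|_{\BV^*}$ for all $N$. Therefore $(\be\la)^{-N}c_N\to 0$, and letting $N\to\infty$ gives the claimed formula $\nu(\1_{[0,x]})=\nu(\1_{[0,1]})\sum_{n=1}^\infty a_n(\be,x)/(\be^n\la^n)$; absolute convergence of the series follows from $a_n(\be,x)\le[\be]$ and $|\be\la|>1$.

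For the final assertion $\nu(\1_{[0,1]})\ne 0$, I would argue by contradiction: if $\nu(\1_{[0,1]})=0$, the formula just proved forces $\nu(\1_{[0,x]})=0$ for every $x\in[0,1]$, hence $\nu(\1_{(a,b]})=\nu(\1_{[0,b]})-\nu(\1_{[0,a]})=0$ for all $a<b$. The span of the indicator functions of intervals is dense in $\BV$ (in fact any $f\in BV$ is, up to a function in $\cN$, a uniform limit of step functions built from such indicators, with controlled variation), so $\nu$ would vanish on a dense subspace of $\BV$, contradicting $\nu\ne 0$ — note $\nu$ is nonzero because it is an eigenfunctional and, by the spectral decomposition~(\ref{decomposition}), pairs nontrivially with the corresponding generalized eigenspace. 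The one point requiring a little care is the density statement; I would either invoke a standard approximation of $BV$ functions by step functions, or observe directly that $\nu$ annihilating all $\1_{[0,x]}$ together with $\cL_\be^*$-invariance up to scaling already pins down $\nu$ on enough of $\BV$ to conclude.
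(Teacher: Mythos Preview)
Your derivation of the formula is essentially identical to the paper's: both apply $\nu$ to the identity in Lemma~\ref{key1}, iterate, and kill the remainder $(\be\la)^{-N}\nu(\1_{[0,\tau_\be^N(x)]})$ using $|\be\la|>1$ together with the uniform bound $\|\1_{[0,\tau_\be^N(x)]}\|_{\BV}\le 2$.

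The genuine gap is in your argument for $\nu(\1_{[0,1]})\ne 0$. Step functions are \emph{not} dense in $\BV$: for any step function $s$ one has $\var(\mathrm{id}-s)\ge 1$, since on each interval where $s$ is constant the difference $\mathrm{id}-s$ contributes variation equal to the length of that interval, and these lengths sum to $1$; passing to the quotient by $\cN$ does not change this, as the essential variation is the same. What your parenthetical actually establishes---uniform approximation with \emph{bounded} variation---is strictly weaker than $\BV$-norm convergence, and a bounded functional on $\BV$ need not respect that mode of convergence, so $\nu(f_n)\to\nu(f)$ cannot be deduced. Your hedge at the end gestures in the right direction but is not an argument.

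The paper proceeds differently. It uses the explicit eigenfunction
\[
h=C\sum_{m\ge 0}\frac{\1_{[0,\tau_\be^m(1)]}}{(\be\la)^m}
\]
from \cite{Su2}, which \emph{does} lie in the closed $\BV$-span of the indicators $\1_{[0,y]}$ (the series converges in $\BV$-norm since $|\be\la|>1$). Applying the already-established formula termwise yields $\nu(h)=\nu(\1_{[0,1]})\cdot C\sum_{m\ge 0}\sum_{n\ge 1}a_n(\be,\tau_\be^m(1))/(\be\la)^{m+n}$, and the paper invokes Proposition~\ref{gm} (geometric multiplicity one) to assert $\nu(h)\ne 0$, which forces $\nu(\1_{[0,1]})\ne 0$. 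The moral for your approach is that you do not need density in all of $\BV$; it suffices to exhibit a single element of the closed span of the $\1_{[0,x]}$ on which $\nu$ is provably nonzero, and the paper's choice is the eigenfunction $h$ itself.
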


\begin{proof}
Since $\nu$ is an eigenfunctional of the Perron-Frobenius operator $\cL_\be$, we know that
\[\nu(f)=\frac{(\cL_\be^*\nu)(f)}{\la}
=\frac{\nu(\cL_\be f)}{\la}\]
for $f\in \BV$. 
By Lemma \ref{key1}, we have
\begin{align*}
\nu(\1_{[0, \tau_\beta^n(x)]})
&=\frac{\nu(\cL_\be \1_{[0, \tau_\beta^n(x)]})}{\la} \\
&=\frac{1}{\la}\Bigl(\nu\Bigl(\frac{a_{n+1}(\be, x)}{\be}\1_{[0,1]}+\frac{1}{\be}\1_{[0,\tau_\be^{n+1}(x)]}\Bigr)\Bigr) \\
&=\frac{a_{n+1}(\be, x)}{\be\la}\nu(\1_{[0,1]})+\frac{\nu(\1_{[0,\tau_\be^{n+1}(x)]})}{\be\la}
\end{align*}
for $n\geq0$ and $x\in[0,1]$. By using this formula inductively, we obtain
\begin{align}\label{above}
\nu(\1_{[0,x]})
&=\frac{a_{1}(\be, x)}{\be\la}\nu(\1_{[0,1]})+\frac{\nu(\1_{[0,\tau_\be^{}(x)]})}{\be\la}\\ \notag
&= \frac{a_{1}(\be, x)}{\be\la}\nu(\1_{[0,1]})+\frac{a_{2}(\be, x)}{\be^2\la^2}\nu(\1_{[0,1]})+\frac{\nu(\1_{[0,\tau_\be^{2}(x)]})}{\be^2\la^2}\\ \notag
&=\dots \\ \notag
&=\sum_{n=1}^N\frac{a_n(\be, x)}{\be^n\la^n}\nu(\1_{[0,1)})+\frac{\nu(\1_{[0,\tau_\be^{N+1}(x)]})}{\be^{N+1}\la^{N+1}}
\end{align}
for $N\geq1$. Note that $|\be\la|>1$ since $1/\be<|\la|\leq1$. Together with the fact that 
\[|\nu(\1_{[0,\tau_\be^m(x)]})|\leq||\nu||\cdot(|\1_{[0,\tau_\be^m(x)]}|_{\infty}+\var(\1_{[0,\tau_\be^m(x)]}))\leq2 ||\nu||\]
for $m\geq0$, where $||\cdot||$ denotes the operator norm on $\BV^*$,
taking $N\to\infty$ in the rightmost side of the equation (\ref{above}) gives
\begin{equation}\label{efef}
\nu(\1_{[0,x]})=\nu(\1_{[0,1]})\cdot\sum_{n=1}^{\infty}\frac{a_n(\be, x)}{\be^n\la^n},
\end{equation}
as desired. 

We shall prove that $\nu(\1_{[0,1]})\neq0$. Let $h\in \BV$ be an eigenfunction of $\cL_\be$ corresponding to $\la$. Since the geometric multiplicity of the eigenspace of $\cL_\be$ corresponding to $\la$ is equal to $1$ by Proposition \ref{gm}, we know that $\nu(h)\neq0$ for any eigenfunctional $\nu\in\BV^*\setminus\{0\}$ corresponding to $\la$. 
By Theorem 3.2 in \cite{Su2}, the eigenfunction $h$ has the form 
\[h=C \sum_{m=0}^\infty\frac{\1_{[0,\tau_\be^m(1)]}}{\be^m\la^m},\]
where $C$ is a non-zero constant. Together with the equality (\ref{efef}), we obtain 
\begin{align*}
\nu(h)
=C\cdot \sum_{m=0}^\infty\frac{\nu(\1_{[0,\tau_\be^m(1)]})}{\be^m\la^m} 
=\nu(\1_{[0,1]})\cdot C\cdot\sum_{m=0}^\infty\sum_{n=1}^\infty \frac{a_n(\be, \tau^m_{\be}(1))}{\be^{m+n}\la^{m+n}},
\end{align*}
which concludes that $\nu(\1_{[0,1]})\neq0$. 

\end{proof}
Let $\la\in\C$ be a complex number with $1/\be<|\la|\leq1$. As motivated by the formula for an eigenfunctional of $\cL_\be$ in Proposition \ref{key2}, we investigate the complex-valued function $F_\la:[0,1]\to\C$ defined by 
\begin{equation}\label{functional}
F_\la(x)=\sum_{n=1}^{\infty}\frac{a_n(\be, x)}{\be^n\la^n}
\end{equation}
for $x\in[0,1]$. The main result of this section is the following characterization for $\la\in\C$ and $F_\la$.

\begin{theorem}\label{main a}
Let $\la\in\C$ be a complex number with $1/\be<|\la|\leq1$. Then we have the following.

(1) The function $F_\la(x)$ is right-continuous on $[0,1)$.

(2) The function $F_\la(x)$ is left-continuous at any non-simple $x\in(0,1]$.

(3) The function $F_\la(x)$ is left-continuous at any simple $x\in(0,1]$ if and only if $\la$ is an isolated eigenvalue of $\cL_\be$. 

(4) If $\la$ is the leading eigenvalue $1$ of $\cL_\be$, then $F_\la(x)=x$. In particular, it is differentiable on $[0,1]$. 

(5) If $\la$ is a non-leading eigenvalue of $\cL_\be$, then $F_\la(x)$ is not Lipschitz continuous on $[0,1]$. In particular, it is nowhere differentiable on $[0,1]$. 

\end{theorem}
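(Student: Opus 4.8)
The plan is to prove the parts in a natural logical order, with (4) and (5) as the endpoints. For (1), I would use the right-continuity of the map $x\mapsto\{a_n(\be,x)\}_{n=1}^\infty$ (which follows from the fact that $\tau_\be$ is right-continuous and the greedy expansion is built from it); since the tail of the defining series $\sum a_n(\be,x)/(\be^n\la^n)$ is uniformly small (because $|a_n(\be,x)|\le[\be]$ and $|\be\la|>1$), dominated convergence on the discrete index set gives right-continuity of $F_\la$. For (2), left-continuity at non-simple $x$ follows because there $\{a_n(\be,x)\}=\{d_n(\be,x)\}=\lim_{y\nearrow x}\{a_n(\be,y)\}$, so the same uniform-tail argument applies. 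For (3), the point is that at a simple $x$ the left limit of the coefficient sequence is $\{d_n(\be,x)\}$, which differs from $\{a_n(\be,x)\}$; writing out $F_\la(x)-\lim_{y\nearrow x}F_\la(y)$ and using the explicit form of $\{d_n(\be,x)\}$ (namely $a_1\cdots a_{L(x)-1}(a_{L(x)}-1)d_1(\be,1)d_2(\be,1)\cdots$) one finds the jump is a nonzero multiple of $1-\wh\phi_\be(1/\la)$, hence of $1-\phi_\be(1/\la)$ by Proposition \ref{a=d}; left-continuity at every simple $x$ is then equivalent to $1-\phi_\be(1/\la)=0$, i.e.\ to $\la$ being an isolated eigenvalue by Theorem \ref{phi}. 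For (4), when $\la=1$ the series $\sum a_n(\be,x)/\be^n$ is exactly the greedy expansion of $x$, so $F_1(x)=x$.

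For (5), the heart of the matter, I would argue by contradiction: suppose $F_\la$ is Lipschitz on $[0,1]$ for a non-leading eigenvalue $\la$. By part (3), $F_\la$ is continuous (both one-sided continuities hold), so Lipschitz continuity is a meaningful hypothesis. The idea is to exploit the self-similar structure of $F_\la$ coming from Lemma \ref{key1} / the functional equation behind Proposition \ref{key2}: on the branch interval $[i/\be,(i+1)/\be)$ the greedy digits satisfy $a_1(\be,x)=i$ and $a_{n+1}(\be,x)=a_n(\be,\tau_\be(x))$, so
\[
F_\la(x)=\frac{i}{\be\la}+\frac{1}{\be\la}F_\la(\tau_\be(x))=\frac{i}{\be\la}+\frac{1}{\be\la}F_\la(\be x-i)
\]
for $x$ in the interior of that branch. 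Iterating along a cylinder of length $N$ contracts the $x$-variable by $\be^{-N}$ but multiplies increments of $F_\la$ by $(\be\la)^{-N}$ in modulus. Since $|\be\la|<\be$ (because $|\la|<1$ for a non-leading eigenvalue), the factor $|\be\la|^{-N}$ decays strictly slower than $\be^{-N}$, so rescaled increments $|F_\la(x)-F_\la(y)|/|x-y|$ along suitable shrinking cylinders blow up like $(\be/|\be\la|)^{N}\to\infty$, contradicting the Lipschitz bound. To make this rigorous I must exhibit, inside arbitrarily short intervals, a genuine nonzero oscillation of $F_\la$ of the right order; the natural way is to reuse the jump computation from part (3): near every simple point $y=\sum_{n=1}^{L}a_n/\be^n$ there is a definite one-sided oscillation of $F_\la$ that, after being pulled back through a cylinder map, is comparable to $|\be\la|^{-L}$ times a fixed constant. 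More robustly, I would not even need simple points: picking a point $y$ where $\{a_n(\be,\tau_\be^N(y))\}$ has $d_1(\be,1)>0$ as its leading digit, the difference $F_\la(y)-F_\la(y')$ for a nearby $y'$ on the adjacent cylinder is, up to the contraction factor $(\be\la)^{-N}$, a fixed-size quantity governed by $d_1(\be,1)/(\be\la)+\cdots$, which is nonzero since the quasi-greedy digits of $1$ are not eventually zero.

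The main obstacle I anticipate is precisely this last point: controlling the \emph{lower} bound on the oscillation uniformly along the chosen sequence of cylinders. The functional equation gives an exact factor $(\be\la)^{-N}$ for how increments transform, but I need a companion interval on which $F_\la$ genuinely differs by a constant-order amount \emph{before} rescaling — and $\la$ being complex (so the ``increments'' are complex numbers that could conceivably cancel) is exactly the subtlety flagged in the remark after Theorem \ref{Main C}. I expect to handle this with a normal-families / Sublemma \ref{sublem}-style argument: the relevant tail sums $\sum_{n\ge1} d_{n-1+j}(\be_0,1)/((\be\la)^n)$ cannot all tend to $0$ (otherwise the digits $d_j(\be,1)$ would be eventually zero), so infinitely many of them are bounded below by a fixed $C>0$; combined with equicontinuity this yields a $\delta>0$ and a subsequence along which the oscillation stays $\ge C/2$ after a small perturbation of the argument, which is enough to push the difference quotient to infinity and conclude non-Lipschitz, hence (since the same blow-up persists on every subinterval by the self-similarity) nowhere differentiable.
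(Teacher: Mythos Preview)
Parts (1)--(4) of your outline coincide with the paper's proof.

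For (5) the paper is both simpler and tighter on a point where your sketch is loose. The simplification: your cancellation worry dissolves without any normal-families argument. Fixing $x_0\in(0,1]$, let $\{l(k)\}_{k\ge1}$ enumerate the positions with $d_{l(k)}(\be,x_0)>0$ in the quasi-greedy expansion of $x_0$ (digits of $x_0$, not of $1$ as you wrote), and set $x_k=\sum_{n=1}^{l(k)}d_n(\be,x_0)/\be^n$. By Proposition~\ref{Parry1} this \emph{is} the greedy expansion of $x_k$, so $F_\la(x_k)=\sum_{n=1}^{l(k)}d_n(\be,x_0)/(\be\la)^n$ exactly; since the digits strictly between positions $l(k)$ and $l(k+1)$ vanish, $F_\la(x_{k+1})-F_\la(x_k)=d_{l(k+1)}(\be,x_0)/(\be\la)^{l(k+1)}$ is a \emph{single} nonzero term, with modulus at least $|\be\la|^{-l(k+1)}$, and meanwhile $|x_0-x_k|\le\frac{[\be]}{1-1/\be}\,\be^{-l(k+1)}$. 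A triangle-inequality detour through $F_\la(x_{k+1})$ (using the assumed left-Lipschitz bound at $x_0$ for the piece $|F_\la(x_{k+1})-F_\la(x_0)|/|x_{k+1}-x_0|$) then gives $|F_\la(x_k)-F_\la(x_0)|/|x_k-x_0|\ge \frac{1-1/\be}{[\be]}|\la|^{-l(k+1)}-C\to\infty$. The point $x_0=0$ is handled separately via $x_N=\be^{-N}$.

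The looseness in your version: you assume \emph{global} Lipschitz, contradict it, and then invoke ``self-similarity'' to upgrade to nowhere differentiable. But failure of Lipschitz on $[0,1]$ (or even on every subinterval) does not by itself give nowhere differentiable, and $\tau_\be$ is not self-similar in the clean IFS sense you seem to rely on --- the rightmost branch is not onto, so a cylinder around a general $x_0$ need not carry a full copy of $F_\la$. The paper bypasses this by working pointwise from the start: it assumes one-sided Lipschitz continuity \emph{at the single point $x_0$} and derives the contradiction there, which directly yields non-differentiability at every $x_0$.
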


\begin{proof}
(1) By definition the function 
\[x\in[0,1)\mapsto a_{n}(\be, x)=[\be\tau_\be^{n-1}(x)]\in\{0,\dots,[\be]\}\]
is right continuous for each $n\geq1$. Since $\sum_{n=1}^N a_n(\be, x)/(\be\la)^n$ converges uniformly for $x\in[0,1]$ as $N\to\infty$, we have that the limit $F_\la(x)=\sum_{n=1}^\infty a_n(\be, x)/(\be\la)^n$ is also right continuous on $[0,1)$. 

(2) Assume that $x_0\in(0,1]$ is non-simple, i.e., $\tau_\be^m(x_0)\notin\{1/\be,\dots,[\be]/\be\}$ for any $m\geq0$. In this case, left continuity of all the functions 
\[x\in(0,1]\mapsto a_{n}(\be, x)=[\be\tau_\be^{n-1}(x)]\in\{0,\dots,[\be]\}\]
at $x_0$ for any $n\geq1$ gives the conclusion since $\sum_{n=1}^N a_n(\be, x)/(\be\la)^n$ converges uniformly for $x\in(0,1]$ as $N\to\infty$. This yields the limit $F_\la(x)=\sum_{n=1}^\infty a_n(\be, x)/(\be\la)^n$ is also left continuous at non-simple $x_0$. 

(3) Assume that $x_0\in[0,1]$ is simple, i.e., there is a positive integer $L(x_0)\geq1$ such that  $\tau_\be^{L(x_0)-1}(x_0)\in\{1/\be,\dots,[\be]/\be\}$. Then the $\be$-expansion of $x_0$ is finite, i.e., 
\[x_0=\sum_{n=1}^{L(x_0)}\frac{a_n(\be, x_0)}{\be^n}.\]
By the left-continuity of the digit sequence $\{d_n(\be,x)\}_{n=1}^\infty$ of the quasi-greedy expansion as noted in Section 2, the limit of the digit sequence of the greedy expansion from left is equal to that of the quasi-greedy expansion.
This yields
\[F_\la(x_0)=\sum_{n=1}^{L(x_0)}\frac{a_n(\be, x_0)}{\be^n\la^n}\]
and
\begin{align*}
\lim_{x\nearrow x_0}F_\la(x_0)
&=\sum_{n=1}^\infty\frac{d_n(\be,x_0)}{\be^n\la^n} \\
&=\sum_{n=1}^{L(x_0)}\frac{a_n(\be, x_0)}{\be^n\la^n}-\frac{1}{\be^{L(x_0)}\la^{L(x_0)}}+\frac{1}{\be^{L(x_0)}\la^{L(x_0)}}\sum_{n=1}^\infty\frac{d_n(\be, 1)}{\be^n\la^n}.
\end{align*}
Hence it is sufficient to show that 
\[-\frac{1}{\be^{L(x_0)}\la^{L(x_0)}}+\frac{1}{\be^{L(x_0)}\la^{L(x_0)}}\sum_{n=1}^\infty\frac{d_n(\be,1)}{\be^n\la^n}=0,\]
namely, 
\[1=\sum_{n=1}^\infty\frac{d_n(\be,1)}{\be^n\la^n}=\Hat{\phi}_\be(\la^{-1}).\]

Since $\la$ is an isolated eigenvalue of $\cL_\be$, by Theorem \ref{phi}, we know that $1-\phi_\be(\la^{-1})=0$. Hence $1-\Hat{\phi}_\be(\la^{-1})=0$ follows from Proposition \ref{a=d}, which gives the conclusion.


(4) Let $\la=1$. By the definition of the beta-expansion of $x\in[0,1]$, we have
\[F_1(x)=\sum_{n=1}^\infty\frac{a_n(x)}{\beta^n}=x,\]
which yields the conclusion.

(5) Let $\la$ be a non-eigenvalue of $\cL_\be$. In case of $x=0$, by taking $x_N=1/\be^N$ for $N\geq1$, we have
\[\Biggl|\frac{F_\la(x_N)-F_\la(0)}{x_N-0}\Biggr|=\be^N\Biggl|\frac{1}{\be^N\la^N}\Biggr|=\frac{1}{|\la|^N}\to\infty\]
as $N\to\infty$, which provides the conclusion at $x=0$.

Let $x_0\in(0,1]$. Assume that the function $F_\la(x)$ is Lipschitz continuous from left at $x_0$, that is, there are a positive constant $C>0$ and $\delta>0$ such that 
\[|F_\la(x)-F_\la(x_0)|\leq C|x-x_0|\] 
for any $x\in(x_0-\delta, x_0]$. 

We shall show that \[F_\la(x)=\sum_{n=1}^\infty\frac{a_n(\be,x)}{\be^n\la^n}=\sum_{n=1}^\infty\frac{d_n(\be,x)}{\be^n\la^n}\]for $x\in(0,1]$. If $x$ is non-simple, we have the result by the fact that $\{d_n(\be,x)\}_{n=1}^\infty=\{a_n(\be,x)\}_{n=1}^\infty$. If $x$ is simple, by the definition of the sequence $\{d_n(\be,x)\}_{n=1}^\infty$ and by Proposition \ref{a=d}, we have
\begin{align*}\sum_{n=1}^\infty\frac{d_n(\be,x)}{\be^n\la^n}&=\sum_{n=1}^{L(x)}\frac{a_n(\be,x)}{\be^n\la^n}-\frac{1}{(\be\la)^{L(x)}}+\sum_{n=1}^\infty\frac{d_n(\be,1)}{\be^n\la^n}\frac{1}{(\be\la)^{L(x)}} \\&=\sum_{n=1}^{L(x)}\frac{a_n(\be,x)}{\be^n\la^n}-\frac{1}{(\be\la)^{L(x)}}+\frac{1}{(\be\la)^{L(x)}} \\&=\sum_{n=1}^{\infty}\frac{a_n(\be,x)}{\be^n\la^n},\end{align*}which gives the desired result.

Since $\{d_n(\be,x_0)\}_{n=1}^\infty$ is the digit sequence of the quasi-greedy expansion of $x_0$, there is a sequence of positive integers $\{l(k)\}_{k=1}^\infty$ such that $l(k)<l(k+1)$ for any $k\geq1$ and $d_{l(k)}(\be,x_0)>0$ for all $k\geq1$. 
By Proposition \ref{Parry1}, we can take a number $x_k$ for $k\geq1$ whose greedy expansion is given by $x_k=\sum_{n=1}^{l(k)}d_n(\be,x_0)/\be^n$. Note that $x_k<x_{k+1}$ for $k\geq1$ and $\lim_{k\to\infty}x_k=x_0$. For sufficiently large $k$ so that $x_k\in(x_0-\delta, x_0]$, we have
\begin{align*}
\Biggl|\frac{F_\la(x_k)-F_\la(x_0)}{x_k-x_0} \Biggr|
&\geq \frac{F_\la(x_{k+1})-F_\la(x_k)}{|x_k-x_0|}-\frac{|x_{k+1}-x_0|}{|x_k-x_0|}\frac{F_\la(x_{k+1})-F_\la(x_0)}{|x_{k+1}-x_0|} \\
&\geq\frac{1-1/\be}{[\be]}\frac{(\be|\la|)^{-l(k+1)}}{(\be)^{-l(k+1)}}-C \\
&= \frac{1-1/\be}{[\be]}\frac{1}{|\la|^{l(k+1)}}-C\to\infty
\end{align*}
as $k\to\infty$, which gives the contradiction. 

\end{proof}

\begin{remark}
By using the equation 
\[\frac{a_n(\be,x)}{\be^n}=\frac{\tau^{n-1}_\be(x)}{\be^{n-1}}-\frac{\tau^{n}_\be(x)}{\be^{n}}\]
for $n\geq1$ and $x\in[0,1]$, we obtain
\begin{align*}
F_\la(x)&=\sum_{n=1}^\infty\frac{a_n(\be,x)}{\be^n\la^n}=\sum_{n=1}^\infty\frac{\tau^{n-1}_\be(x)}{\be^{n-1}\la^n}-\sum_{n=1}^\infty\frac{\tau^{n}_\be(x)}{\be^{n}\la^n} \\
&=\frac{x}{\la}+\Biggl(\frac{1}{\la}-1\Biggr)\sum_{n=1}^\infty\frac{\tau^{n}_\be(x)}{\be^{n}\la^n}
\end{align*}
for $x\in[0,1]$. We note that the function $\displaystyle{\rho_\be(x)=\sum_{n=1}^\infty\frac{\tau^{n}_\be(x)}{\be^{n}\la^n}}$ appearing in the right side of the above equation can be regarded as a version of the Takagi function, which is obtained if we replace $\be\la$ to $2$ and the map $\tau_\be$ to the tent map in the definition of $\rho_\be(x)$. The Takagi function is known as a typical example of a singular function as it is continuous but nowhere differentiable everywhere, which has been investigated in the context of fractal geometry (for more detail see \cite{Al-Ka}). 
\end{remark}

As an application of Theorem \ref{main a} (5), we show that an eigenfunctional corresponding to each non-leading eigenvalue can not be expressed by any complex measure. This is contrast to the case of the leading eigenvalue $1$ since by Theorem \ref{main a} (4) the normalized eigenfunctional of the leading eigenvalue $1$ is expressed by the Lebesgue measure.

\begin{theorem}\label{main b}
Let $\be>1$ be a non-integer and let $\la\in\C$ be a non-leading eigenvalue of the Perron-Frobenius operator $\cL_{\be}$. Then for any corresponding eigenfunctional $\nu\in\BV^*$, there is \textbf{no} complex-valued measure $m$ such that 
\[\nu(f)=\int_{0}^1 f\ dm\]
for $f\in \BV$. 
\end{theorem}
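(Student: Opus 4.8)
The plan is to argue by contradiction: suppose the eigenfunctional $\nu$ corresponding to the non-leading eigenvalue $\la$ is given by integration against a complex measure $m$. The idea is that the explicit formula from Proposition~\ref{key2},
\[
\nu(\1_{[0,x]})=\nu(\1_{[0,1]})\cdot F_\la(x),
\]
would then force $F_\la$ to be the ``distribution function'' of $m$, hence of bounded variation (being the difference of monotone functions coming from the Jordan decomposition of $m$), which would in particular make $F_\la$ differentiable almost everywhere with respect to Lebesgue measure --- contradicting Theorem~\ref{main a}~(5), which says $F_\la$ is nowhere differentiable on $[0,1]$.

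**First I would** make the link precise. If $\nu(f)=\int_0^1 f\,dm$ for all $f\in\BV$, then for $0\le x\le 1$ we have $\nu(\1_{[0,x]})=m([0,x])$, and combining with Proposition~\ref{key2},
\[
F_\la(x)=\frac{m([0,x])}{\nu(\1_{[0,1]})},
\]
which is legitimate since $\nu(\1_{[0,1]})\neq 0$ by the second assertion of Proposition~\ref{key2}. Writing $m=m_1-m_2+i(m_3-m_4)$ with each $m_j$ a finite positive measure (the Jordan/Hahn decomposition of the real and imaginary parts), the function $x\mapsto m([0,x])$ is a finite linear combination of the nondecreasing functions $x\mapsto m_j([0,x])$. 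Each such monotone function is differentiable at Lebesgue-almost every $x\in[0,1]$ by Lebesgue's differentiation theorem for monotone functions, hence so is their linear combination $F_\la$. In particular $F_\la$ is differentiable at \emph{some} point of $[0,1]$.

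**Then I would** invoke Theorem~\ref{main a}~(5): since $\la$ is a non-leading eigenvalue and $\be$ is a non-integer, $F_\la$ is not Lipschitz continuous on $[0,1]$ and, more to the point, nowhere differentiable on $[0,1]$. This directly contradicts the conclusion of the previous paragraph that $F_\la$ is differentiable almost everywhere. Hence no such complex measure $m$ can exist, and the theorem follows.

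**The main obstacle** I anticipate is a minor bookkeeping point rather than a conceptual one: one must be careful about the left/right continuity conventions so that ``$m([0,x])$'' is unambiguously tied to $F_\la(x)$ for \emph{every} $x$, not merely almost every $x$ --- but for the contradiction it actually suffices to know the identity $F_\la(x)=m([0,x])/\nu(\1_{[0,1]})$ holds for all $x$ (which it does, applied to the specific test function $\1_{[0,x]}\in\BV$), and then differentiability a.e.\ of the right-hand side is all that is needed. A secondary point worth a remark is that one should note that the nowhere-differentiability in Theorem~\ref{main a}~(5) is genuine pointwise nowhere-differentiability, so it overrides any almost-everywhere differentiability statement; no delicate measure-theoretic refinement is required.
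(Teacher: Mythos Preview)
Your proof is correct and follows essentially the same argument as the paper: assume $\nu$ is represented by a complex measure $m$, identify $F_\la(x)$ with (a scalar multiple of) the distribution function $m([0,x])$ via Proposition~\ref{key2}, note that such a function is of bounded variation and hence differentiable almost everywhere, and derive a contradiction with the nowhere-differentiability statement in Theorem~\ref{main a}~(5). The only cosmetic difference is that the paper normalizes $\nu(\1_{[0,1]})=1$ at the outset, whereas you carry the nonzero constant $\nu(\1_{[0,1]})$ through and divide by it.
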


\begin{proof}
Let $\la$ be a non-leading eigenvalue of $\cL_\be$ and let $\nu_\la$ be the corresponding eigenfunctional with $\nu(\1_{[0,1]})=1$. Assume that there is a complex-valued measure $m$ such that $\nu_\la(f)=m(f)$ for any $f\in \BV$. By taking $f=\1_{[0,x]}$ for $x\in[0,1]$, we obtain
\[m([0,x])=m(\1_{[0,x]})=\nu_\la(\1_{[0,x]})=F_\la(x).\]
Since $m$ is a complex measure, the function $m([0,x])$ for $x\in[0,1]$ is of bounded variation. In particular, it is differentiable for almost every $x\in[0,1]$ with respect to the Lebesgue measure. This contradicts the fact that $F_\la(x)$ is nowhere differentiable on $[0,1]$ followed from Theorem \ref{main a} (5). 

\end{proof}

In the rest of this section, we consider a real number $\be\in(1,\infty)$ for which the corresponding Perron-Frobenius operator has at least one non-leading eigenvalue. Let $M(\be)=\max\{|\la|\ ; \la\in\C\ \text{ is a non-leading eigenvalue}\}$. Note that $1/\be< M(\be)<1$ since the modulus of each non-leading eigenvalue is in $(1/\be, 1)$ and eigenvalues are isolated in the set $\{z\in\C\ ;\ 1/\be<|z|\leq1\}$. We say that a non-leading eigenvalue $\la\in\C$ is sub-leading if its modulus is equal to $M(\be)$. 
For a function $f\in \BV$, we say that $f$ has a good decay rate of correlations if there are a positive constant $C>0$ and a positive number $\alpha$ with $1/\be\leq\alpha <M(\be)$ such that 
\[||\cL_\be^n f||_{\BV}\leq C\alpha^n\]
for $n\geq1$. Note that if a function $f\in \BV$ has a good decay rate of correlations then $\int_0^1 f\ dm=0$ and it holds
\[\Biggl|\int_{0}^1f \cdot g\circ\tau_\be^n d\mu_\be-\int_0^1f d\mu_\be\int_0^1g d\mu_\be \Biggr|\leq C|g|_\infty \alpha^n\]
for $g\in L^\infty(\mu_\be)$ and $n\geq1$ by the spectral decomposition (\ref{decomposition}) of $\cL_\be$ (see e.g. \cite{Ba}, \cite{Bo-Go}, \cite{Ho-Ke}). 
The following result gives a sufficient condition that a step function has a good decay rate of correlations.  
\begin{theorem}\label{main b2}
Let $\beta\in(1,\infty)$ be a real number such that the corresponding Perron-Frobenius operator $\cL_{\beta}$ actually has a non-leading eigenvalue. Assume that all the sub-leading eigenvalues $\{\la_i\}_{i=1}^N$ are simple. Then a step function $f:[0,1]\to\C$ of the form 
\[f(x)=\sum_{i=0}^{N+K}c_i\1_{[0,x_i]}(x)\]
for $x\in[0,1]$, where $K\geq1$, $c_i\in\C$ for $0\leq i\leq N+K$ and $x_i\in[0,1]$ for $1\leq i\leq N+K$ with $x_j\neq x_k$ if $j\neq k$, has a good decay rate of correlation if the coefficients $\{c_i\}_{i=0}^{N+K}$ is a solution of the following $(N+1)\times(N+K+1)$ matrix equation
\begin{equation}\label{matrix}
\begin{pmatrix}
x_0 & \cdots & x_{N+K}\\
F_{\la_1}(x_0) &    \cdots   & F_{\la_{1}}(x_{N+K}) \\
\vdots &   \ddots & \vdots \\
 F_{\la_N}(x_0)& \cdots   & F_{\la_N}(x_{N+K})
\end{pmatrix}
\begin{pmatrix}
c_0 \\
c_1 \\
\vdots  \\
c_{N+K}
\end{pmatrix}
=0,
\end{equation}
where $F_{\la_i}(x)$ is the function defined by
\[F_{\la_i}(x)=\sum_{n=1}^\infty\frac{a_n(\be,x)}{\be^n\la_i^n}\]
for $x\in[0,1]$ and $1\leq i\leq N$.

\end{theorem}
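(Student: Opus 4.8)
The plan is to use the spectral decomposition (\ref{decomposition}) to pin down a finite family of linear functionals whose simultaneous vanishing on $f$ forces $\cL_\be^n f$ to decay strictly faster than $M(\be)^n$, and then to observe that the matrix equation (\ref{matrix}) is precisely that vanishing condition, via Proposition \ref{key2}.

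First I would fix a convenient splitting. Inside $\{1/\be<|z|\le 1\}$ the spectrum of $\cL_\be$ consists of isolated eigenvalues, and for each fixed $\theta>1/\be$ only finitely many have modulus exceeding $\theta$; so, writing $M'(\be)$ for the maximum of $1/\be$ and of the moduli of those eigenvalues lying strictly between $1/\be$ and $M(\be)$ in modulus, one has $M'(\be)<M(\be)$. The eigenvalues of modulus at least $M(\be)$ are then exactly the leading eigenvalue $\la_0:=1$ (simple, as recalled in Section~2) together with the sub-leading eigenvalues $\la_1,\dots,\la_N$ (simple by hypothesis), and $1$ is the only eigenvalue on the unit circle. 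Choosing $\theta\in(M'(\be),M(\be))$, the decomposition (\ref{decomposition}) reads $\cL_\be f=\sum_{i=0}^N\la_i h_i\nu_i(f)+\cP\cL_\be f$, where, each $\la_i$ being simple, $\Pi_i f:=h_i\nu_i(f)$ is a rank-one spectral projection with associated eigenfunctional $\nu_i\in\BV^*$, and $\cP\cL_\be$ has spectral radius at most $\theta$. Since the $\Pi_i$ are mutually orthogonal and annihilate $\cP$, iterating gives $\cL_\be^n f=\sum_{i=0}^N\la_i^n h_i\nu_i(f)+(\cP\cL_\be)^n f$ for all $n\ge1$.

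Next I would show that (\ref{matrix}) says exactly $\nu_0(f)=\nu_1(f)=\dots=\nu_N(f)=0$. Each $\la_i$ ($0\le i\le N$) is an isolated eigenvalue with $|\la_i|>1/\be$, so Proposition \ref{key2} gives $\nu_i(\1_{[0,x]})=\nu_i(\1_{[0,1]})F_{\la_i}(x)$ with $\nu_i(\1_{[0,1]})\neq0$; for $i=0$ one uses in addition $F_1(x)=x$ from Theorem \ref{main a}(4), which is why the top row of (\ref{matrix}) is $(x_0,\dots,x_{N+K})$. Hence, for $f=\sum_{j=0}^{N+K}c_j\1_{[0,x_j]}$, one has $\nu_i(f)=\nu_i(\1_{[0,1]})\sum_{j=0}^{N+K}c_jF_{\la_i}(x_j)$ for $0\le i\le N$, so the $N+1$ rows of (\ref{matrix}) amount precisely to $\nu_i(f)=0$ for all such $i$; for such $f$ the iteration formula collapses to $\cL_\be^n f=(\cP\cL_\be)^n f$. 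Finally, since $\cP\cL_\be$ has spectral radius at most $\theta<M(\be)$, for any $\vep>0$ with $\theta+\vep<M(\be)$ there is a constant $C_\vep$ with $\|(\cP\cL_\be)^n\|_{\BV\to\BV}\le C_\vep(\theta+\vep)^n$ for all $n\ge1$; taking $\al=\theta+\vep$ (so $1/\be<\al<M(\be)$) and $C=C_\vep\|f\|_{\BV}$ yields $\|\cL_\be^n f\|_{\BV}\le C\al^n$, that is, $f$ has a good decay rate of correlations. Such an $f$ can moreover be chosen nonzero, since (\ref{matrix}) is homogeneous with $N+1$ equations and $N+K+1>N+1$ unknowns.

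The step I expect to need the most care is the spectral bookkeeping in the second paragraph above: verifying that killing only the leading and sub-leading eigenfunctionals is enough. This hinges on quasi-compactness — between $1/\be$ and $M(\be)$ there are only finitely many eigenvalues, all of modulus strictly less than $M(\be)$ — so that a single $\al<M(\be)$ dominates every surviving contribution (the eigenvalue blocks of modulus in $(1/\be,M(\be))$ and the essential part governed by $\cP\cL_\be$) uniformly in $n$; the possible presence of non-trivial Jordan blocks among the surviving eigenvalues is harmless, since $n^{m}|\la|^n\le C\al^n$ whenever $|\la|<\al$.
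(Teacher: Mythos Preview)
Your proof is correct and follows essentially the same route as the paper's: identify the rows of (\ref{matrix}) with the vanishing of the eigenfunctionals $\nu_0,\dots,\nu_N$ via Proposition~\ref{key2} (and $F_1(x)=x$ for the top row), then invoke the spectral decomposition (\ref{decomposition}) so that only the part with spectral radius strictly below $M(\be)$ survives. Your treatment is in fact more explicit than the paper's on two points it leaves implicit---the choice of a threshold $\theta\in(1/\be,M(\be))$ lying above all non-sub-leading eigenvalue moduli, and the passage from the spectral radius bound on $\cP\cL_\be$ to the estimate $\|\cL_\be^n f\|_{\BV}\le C\alpha^n$---so nothing is missing. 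One small wording issue: your definition of $M'(\be)$ as a ``maximum'' over possibly infinitely many eigenvalue moduli in $(1/\be,M(\be))$ is not quite right, since these moduli may accumulate at $1/\be$; but what you actually need (and use) is only that there is some $\theta<M(\be)$ with no eigenvalues of modulus in $(\theta,M(\be))$, which follows from isolation of the point spectrum outside the essential disk.
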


\begin{remark}
(1) In Section 9.2 in \cite{Co-Ec}, the authors provide an example of a piecewise linear unimodal Markov map for which isolated eigenvalues and the corresponding eigenfunctions of its Perron-Frobenius operator can be given explicitly. As an application, they also gave locally constant functions which have a good decay rate of correlation in the sense of this paper. From the above theorem we can provide additional examples of a larger class of step functions which have a good decay rate of correlations in the setting of beta-maps. 

(2) The assumption that all the sub-leading eigenvalues are simple 
seems to hold in many cases, including the case of Parry numbers less than $3$ whose beta conjugates are not units (see Corollary 3.9 in \cite{Ve}).
In Appendix, we construct a countable set of $\be$'s with $\be>3$ such that all sub-leading eigenvalues of the corresponding Perron-Frobenius operator are simple. 

\end{remark}

\begin{proof}[Proof of Theorem \ref{main b2}]
Since the dimension of the solution space of  an $(N+1)\times(N+K+1)$-matrix is at least $K$, 
there exactly exists an $(N+K+1)$-dimensional vector satisfying $(\ref{matrix})$.
We shall show a step function $f(x)=\sum_{i=0}^{N+K}c_i\1_{[0,x_i]}(x)$ whose coefficients $\{c_i\}_{i=0}^{N+K}$ satisfying the equation $(\ref{matrix})$ belongs to the kernel of the normalized eigenfunctional $\nu_{\la_i}$ of $\la_i$ for $1\leq i\leq N$ with $\nu_{\la_i}(\1_{[0,1]})=1$ and $\int_0^1 f dm=0$. By calculating the inner product of the first raw of the matrix given in $(\ref{matrix})$ and the vector $(c_0, \dots, c_{N+K})$, we have
\[\int_{0}^1 f dm=\sum_{i=0}^{N+K} c_ix_i=0.\]
Furthermore, by Proposition \ref{key2} and the equation $(\ref{matrix})$, we obtain
\[\nu_{\la_j}(f)=\nu_{\la_i}\Bigl(\sum_{i=0}^{N+K} c_i\1_{[0,x_i]}\Bigr)=\sum_{i=0}^{N+K} c_iF_{\la_j}(x_i)=0\]
for $1\leq j\leq N$.
By the spectral decomposition ($\ref{decomposition}$) of the Perron-Frobenius operator $\cL_\be$, we know that the terms for the leading eigenvalue $1$ and the subleading eigenvalues $\{\la_i\}_{i=1}^N$ in the sum of the right side of the equation ($\ref{decomposition}$)
vanish, which finishes the proof.
\end{proof}

\end{section}

\section{Appendix}
This appendix is devoted to giving three examples of countable sets of $\be$'s such that all subleading eigenvalues of $\cL_\be$ are simple.

\begin{example}
For a positive integer $n\geq3$, we give an example of $\be\in(n,n+1)$ such that it is a simple Parry number and the Perron-Frobenius operator $\cL_\be$ has only one non-leading eigenvalue on the negative axis, which is simple. We define the polynomial $p_n(x)$ by 
\[p_n(x)=x^4-nx^3-(n-1)x^2-n.\]
Since
\[p_n'(x)=x(4x^2-3nx-2(n-1)),\]
the function $p_n(x)$ for $x\in\R$ has the two local minimums at 
\[x^+:=\frac{3n+\sqrt{9n^2+8(n-1)}}{8}>\frac{3n}{2} \text{ and }x^-:=\frac{3n-\sqrt{9n^2+8(n-1)}}{8}>-1
\]
and the local maximum at $0$. Hence the function $p_n(x)$ is increasing on $(x^-,0)$ and $(x^+,+\infty)$, and is decreasing on $(-\infty, x^{-}]$ and $[0,x^+]$. Since $p_n(0)=-n<0$ and $p_n(-1)=2-n<0$, the function $p_n(x)$ has two real solutions $\be$ with $3\leq n<\be<n+1$ and $-\alpha$ with $-\alpha<-1$. In addition, $p_n(x)$ has two complex solutions $\gamma, \bar{\gamma}\in\C$ satisfying $\be(-\alpha)|\gamma|^2=-n$, which yields
\[|\gamma|^2=\frac{n}{\be\alpha}<\frac{n}{n\cdot1}=1.\]
By the fact that $\be$ satisfies
\[1=\frac{n}{\be}+\frac{n-1}{\be^2}+\frac{0}{\be^3}+\frac{n}{\be^4},\]
we have that $a_1(\be,1)=n$, $a_2(\be,2)=n-1$, $a_3(\be,1)=0$ and 
$\tau_\be^3(1)=n/\be$ by the definition of the map $\tau_\be$. 
Since $p_n(x)=x^4(1-\phi_\be(\be/x))$ for $x\in\R$ with $|x|<\be$, we have that $1-\phi_\be(z)$ has a unique zero $-\be/\al\in(-\be,-1)\subset\{z\in\C; 1<|z|<\be\}$, which yields $-\al/\be\in(-1,-1/\be)$ is a unique non-leading eigenvalue of $\cL_\be$, which is simple.   

\end{example}

\begin{example}
For a positive integer $n\geq4$, we give an example of $\be\in(n,n+1)$ such that it is a simple Parry number and
the Perron-Frobenius operator $\cL_\be$ has two complex non-leading eigenvalues, which are simple. 
We define the polynomial $q_n(x)$ by
\[q_n(x)=x^4-nx^3-(n-1)x-n.\]
Since
\[q_n'(x)=4x^3-3nx^2-(n-1)\]
and 
\[q_n''(x)=6x(2x-n)\]
the equation $q_n'(x)=0$ has only one real solution $x^+\in(n/2, \infty)$ by the fact that $q_n'(0)=-(n-1)<0$. This shows that $q_n'(x)<0$ for $x\in(-\infty, x^+)$ and $q_n'(x)>0$ for $x\in(x^+,\infty)$. Hence we have that the function $q_n(x)$ for $x\in\R$ has the local minimum at $x^+$, which yields that it is decreasing on $(-\infty, x^+)$ and increasing on $(x^+,\infty)$.
Since $q_n(-1)=n>0$, $q_n(1)=-3n+2<0$, $q_n(n)<0$ and $q_n(n+1)>0$ the equation $q_n(x)=0$ has the two real solutions $\be$ with $n<\be<n+1$ and $-\alpha$ with $0<\alpha<1$. Furthermore, it has complex solutions $\gamma, \bar{\gamma}\in\C$ satisfying $\be(-\alpha)|\gamma|^2=-n$, which implies
\[|\gamma|^2=\frac{n}{\alpha\be}>\frac{n}{(n+1)\alpha}>1,\]
since $\alpha<n/(n+1)$ by the fact that 
\begin{align*}
q_n\Bigl(-\frac{n}{n+1}\Bigr)
&>\frac{n^4}{(n+1)^3}+\frac{n(n-1)}{n+1}-n \\
&=\frac{n^4-2n(n+1)^2}{(n+1)^3}>0.
\end{align*}
Since 
\[1=\frac{n}{\be}+\frac{0}{\be^2}+\frac{n-1}{\be^3}+\frac{n}{\be^4},\]
we have that $a_1(\be,1)=n$, $a_2(\be,1)=0$, $a_3(\be,1)=n-1$ and $\tau_\be^3(1)=n/\be$ by the definition of the map $\tau_\be$. Since $q_n(x)=x^4(1-\phi_\be(\be/x))$ for $x\in\R$ with $|x|<\be$, we have that $1-\phi_\be(z)$ has two zeros $-\be/\gamma$ and $-\be/\bar{\gamma}$ with $1<|\gamma|<\be$, which yields $-\gamma/\be$ and $-\bar{\gamma}/\be$ are non-leading eigenvalues of $\cL_\be$, which are simple.  
 \begin{figure}
    \begin{tabular}{cc}
      \begin{minipage}[t]{0.45\hsize}
        \centering
        \includegraphics[width=4cm]{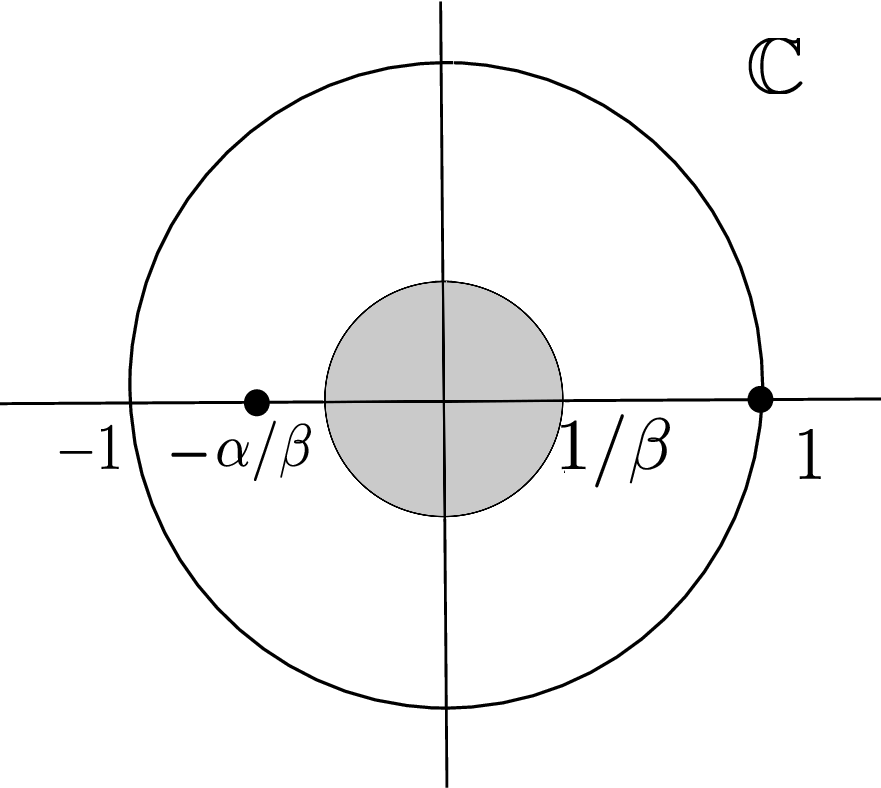}
        \caption{Figure of the spectrum of $\cL_\beta$ in Example 5.1 and 5.3.}
              \end{minipage} &
           \begin{minipage}[t]{0.45\hsize}
        \centering
        \includegraphics[width=4cm]{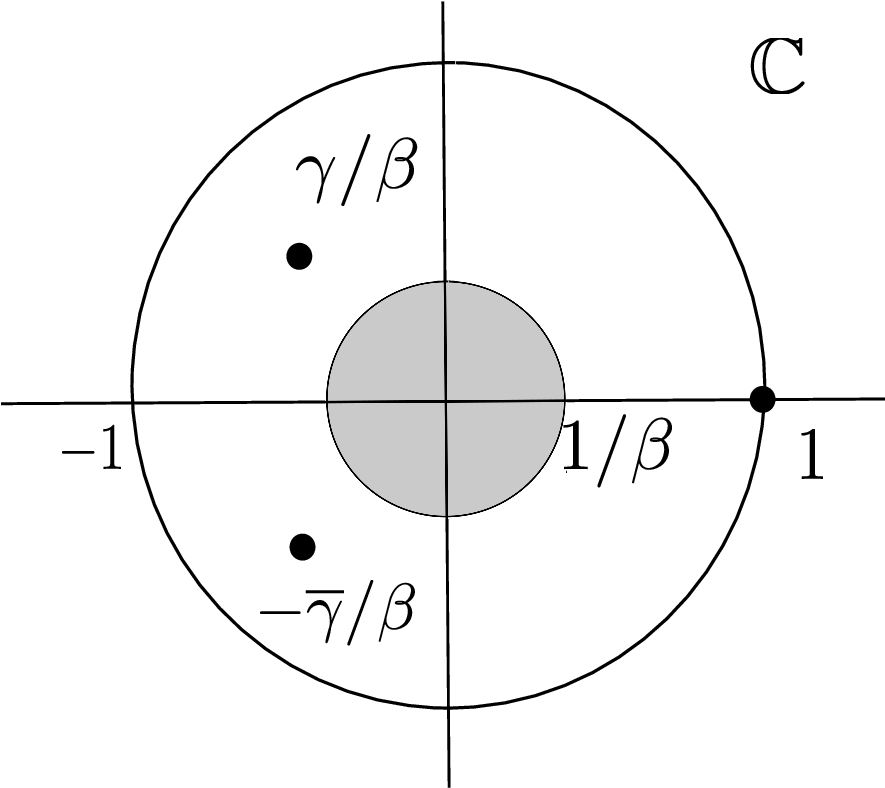}
        \caption{Figure of the spectrum of $\cL_\beta$ in Example 5.2.}
              \end{minipage}
          \end{tabular}
  \end{figure}

\end{example}

\begin{example}
For a positive integer $n\geq4$, we give an example of $\be\in(n,n+1)$ such that it is a non-simple Parry number and the corresponding Perron-Frobenius operator $\cL_\be$ has only one non-leading eigenvalue on the negative axis, which is simple. We define the polynomial $r_n(x)$ by 
\[r_n(x)=x^4-(n+1)x^3+nx-(n-1).\]
Note that
\[r_n'(x)=4x^3-3(n+1)x^2+n\]
and 
\[r_n^{''}(x)=6x(2x-(n+1)).\]
Since $r_n^{''}(x)=0$ if and only if $x=0$ or $(n+1)/2$, the function $r''_n(x)$ for $x\in\R$ has the local maximum at $0$ and the local minimum at $(n+1)/2$, the function $r'_n(x)$ for $x\in\R$ is decreasing on $(0,(n+1)/2)$ and increasing on $(-\infty, 0)\cup ((n+1)/2, \infty)$. By the fact that $r_n'(0)=n>0$ and $r'_n((n+1)/2)<r'_n(1)=1-2n<0$, the equation $r_n'(x)=0$ for $x\in\R$ has the three real solutions $x_1, x_2$ and $x_3$ with $x_1<0<x_2<1<x_3$. In fact, we can see that $x_2<1-1/n$ by the calculation:
\begin{align*}
r_n'\Bigl(1-\frac{1}{n}\Bigr)
&=4\Biggl(1-\frac{1}{n}\Biggr)^3-3(n+1)\Biggl(1-\frac{1}{n}\Biggr)^2+n \\
&<4-3(n+1)\frac{9}{16}+n=\frac{37-11n}{16}<0,
\end{align*}
since $n\geq4$. Note that the function $r_n(x)$ for $x\in\R$ has the local minimum at $x_2$ and increases on $(x_2, 1)$. Together with the fact that $0<x_2<1-1/n$, we obtain
\[r_n(x_2)=1-n(1-x_2)-x_2^3((n+1)-x_2)<1-n(1-x_2)<0,\]
which implies that the equation $r_n(x)=0$ has exactly two real solutions $\be>0$ and $-\alpha<0$.
Since $r_n(n)<0$, $r_n(n+1)>0$ and $r_n(-1)=3-n<0$, we obtain that $\be\in(n,n+1)$ and $-\alpha<-1$.
By the relation between the roots and the coefficients of the equation $r_n(x)=0$, the rest two complex solutions $\gamma, \bar{\gamma}\in\C$ satisfies
\[|\gamma|^2=\frac{n-1}{\alpha\beta}<\frac{n-1}{n}<1,\]
which yields that $|\gamma|<1$. Since
\[1=\frac{n}{\be}+\frac{n}{\be^2}+\frac{0}{\be^3}+\frac{n-1}{\be^4}+\frac{n-1}{\be^5}+\dots,\]
we have that $a_1(\be,1)=a_2(\be,1)=n$, $a_3(\be,1)=0$ and $a_{4+k}(\be)=n-1$ for any $k\geq0$ by the definition of the map $\tau_\be$. 

Since $r_n(x)=(x^4-x^3)(1-\phi_\be(\be/x))$ for $x\in\R$ with $|x|<\be$, we have that $1-\phi_\be(z)$ has a unique zero $-\be/\al\in(-\be,-1)\subset\{z\in\C; 1<|z|<\be\}$, which yields $-\al/\be\in(-1,-1/\be)$ is a unique non-leading eigenvalue of $\cL_\be$, which is simple. \end{example}

Acknowledgements. The author would like to thank Hiroki Sumi, Hiroki Takahasi and Masato Tsujii for their valuable comments. This work was supported by JSPS KAKENHI Grant Number 20K14331 and 24K16932.

\bibliographystyle{amsplain}
\addcontentsline{toc}{section}{References}

\end{document}